\theoremstyle{plain}
\newtheorem{theorem}{Theorem}[section]
\newtheorem*{theorem*}{Theorem}
\newtheorem{proposition}[theorem]{Proposition}
\newtheorem{conjecture}[theorem]{Conjecture}
\newtheorem{corollary}[theorem]{Corollary}
\newtheorem{example}[theorem]{Example}
\newtheorem{lemma}[theorem]{Lemma}
\newtheorem{definition}[theorem]{Definition}
\theoremstyle{remark}
\newtheorem{remark}[theorem]{Remark}
\newtheorem{claim}{Claim}[theorem]
\newtheorem{ejemplo}{{\sc Example}}
\newtheorem{notas}[theorem]{{\sc Remark}}
\newcommand{\nrm}[1]{\|#1\|}
\DeclareMathOperator{\fr}{\mathrm{Fr}}
\DeclareMathOperator{\stab}{\mathrm{Stab}}
\newcommand{\auh}{$\mathrm{(AuH)}$\xspace}
\DeclareMathOperator{\env}{\mathrm{Env}}
\newcommand{\prop}{\begin{proposition}}
\newcommand{\fprop}{\end{proposition}}
\newcommand{\cor}{\begin{corollary}}
\newcommand{\fcor}{\end{corollary}}
\newcommand{\defi}{\begin{definition}\rm}
\newcommand{\fdefi}{\end{definition}}
\newcommand{\eje}{\begin{ejemplo}}
\newcommand{\feje}{\end{ejemplo}}
\newcommand{\ejes}{\begin{ejemplos}}
\newcommand{\fejes}{\end{ejemplos}}
\newcommand{\lema}{\begin{lemma}}
\newcommand{\flema}{\end{lemma}}
\newcommand{\teor}{\begin{theorem}}
\newcommand{\fteor}{\end{theorem}}
\newcommand{\nota}{\begin{notas}\rm}
\newcommand{\fnota}{ \end{notas}}
\newcommand{\clam}{\begin{claim}}
\newcommand{\fclam}{\end{claim}}
\newcommand{\clams}{\begin{claim*}}
\newcommand{\fclams}{\end{claim*}}
\newcommand{\lclam}{\begin{lclaim}}
\newcommand{\flclam}{\end{lclaim}}
\newcommand{\prucl}{\prue[Proof of Claim:]}
\newcommand{\fprucl}{\fprue}
\newcommand{\ben}{\begin{enumerate}}
\newcommand{\een}{\end{enumerate}}
\newcommand{\bit}{\begin{itemize}}
\newcommand{\falta}{\textbf{******  falta **** }}
\newcommand{\eit}{\end{itemize}}
\newcommand{\mc}[1]{\mathcal{#1}}
\newcommand{\mr}[1]{\mathrm{#1}}
\newcommand{\casos}{\begin{itemize}}
\newcommand{\fcasos}{\end{itemize}\setcounter{cs}{1}}
\newcommand{\Gurarij}{{$\mathbb{G}$\ }}
\newcommand{\conj}[2]{ \{ {#1}\,:\,{#2} \} }
\newcommand{\buit}{\emptyset}
\newcommand{\ga}{\gamma}
\DeclareMathOperator{\id}{\mathrm{Id}}
\newcommand{\Ga}{\Gamma}
\newcommand{\de}{\delta}
\newcommand{\De}{\Delta}
\newcommand{\la}{\lambda}
\newcommand{\sig}{\sigma}
\newcommand{\Sig}{\Sigma}
\newcommand{\vphi}{\varphi}
\DeclareMathOperator{\isomo}{\mathrm{GL}}
\DeclareMathOperator{\isome}{\mathrm{Isom}}
\DeclareMathOperator{\Emb}{\mathrm{Emb}}
\newcommand{\vep}{\varepsilon}
\newcommand{\R}{{\mathbb R}}
\newcommand{\N}{{\mathbb N}}
\newcommand{\rest}{\upharpoonright}
\newcommand{\supp}{\mathrm{supp\, }}
\newcommand{\lat}{{\rm Lat}}
\newcommand{\con}{\subseteq}
\newcommand{\prue}{\begin{proof}}
\newcommand{\fprue}{\end{proof}}
\subjclass[2010]{Primary  41A35, 46A22, 46B04; Secondary,  46E30} 
\keywords{ Isometries on $L_p$ spaces, contractive projection,  Korovkin sets, ultrahomogeneity}
\thanks{\textcolor{black}{ V.F. \ was supported by FAPESP, project 2016/25574-8, and by CNPq, grant 303731/2019-2. J.L.-A. \ was
partially supported by  the Ministerio de Ciencia e Innovación grant PID2019-107701GB-I00 (Spain) and  Fapesp grant
2016/25574-8 (Brazil) }}
\begin{document} 

\title{Envelopes in Banach spaces}
\author{V. Ferenczi, J. Lopez-Abad}

\maketitle

\
\begin{center}
\today    
\end{center}
\begin{abstract} 
We define the notion of isometric envelope of a subspace in a Banach space, and relate it  to a) the mean ergodic projection on the space of fixed points of a semigroup of contractions,  b)  results on Korovkin sets from the 70's, and c) extension properties of linear isometric embeddings.
We use this concept to address the recent conjecture 
that the Gurarij space and the spaces $L_p$, $p \notin 2\N+4$ are the only separable Approximately Ultrahomogeneous  
Banach spaces (a certain  multidimensional
transitivity of the action of the linear isometry group on the space considered in \cite{FLMT}). The similar conjecture for  Fra\"iss\'e Banach spaces (a strenghtening of the Approximately Homogeneous Property) is also considered. We characterize the Hilbert space as the only separable reflexive space in which any closed subspace coincides with its envelope.
We compute some envelopes in the case of Lebesgue spaces, showing that the reflexive $L_p$-spaces are the only reflexive rearrangement invariant spaces on $[0,1]$ for which all $1$-complemented subspaces are envelopes. We also identify the isometrically unique "full" quotient space of $L_p$ by a Hilbertian subspace, for appropriate values of $p$, as well as the associated topological group embedding of the unitary group into the isometry group of $L_p$.
\end{abstract}

\


\section{Introduction and notation}

\subsection{The motivation}

An initial motivation for this work stems from the the famous rotations problem of Mazur. 
Assume the linear isometry group of a separable Banach space $X$ acts transitively on its unit sphere; must $X$ be isometric to the Hilbert space? (see for example \cite{CabelloSanchez} or the recent survey \cite{felixvalentinbeata}). One may find many examples of non-Hilbertian spaces which are almost transitive (AT), meaning that the orbits of the action of the isometry group on the sphere are dense. Striking examples of this are the Lebesgue spaces $L_p(0,1)$ for $1 \leq p<\infty$. For $p \neq 2$ those spaces, although not transitive,  are very close to being so. Indeed they admit exactly two orbits for the action of the isometry group, the orbit of functions with full support, resp. without full support. In some sense this suggests that the $L_p$ spaces are transitive ``with respect to the supports", that is, provided one sees each function $f$ inside its ``appropriate $L_p$-space" $L_p({\rm supp\ }f)$.
The concept of isometric envelope defined in the present paper initially aimed to imitate some aspects of the support of a function, in an abstract  setting which would not require the structure of a function space.

The correct setting for defining the isometric envelope is multidimensional, so let us recall a few facts. A Banach space $X$ is ultrahomogeneous when any isometric map between finite dimensional subspaces extends to a (surjective) isometry on $X$. A multidimensional version of Mazur's rotation problem asks: must any separable ultrahomogeneous Banach space   be isometric to the Hilbert space (\cite{FLMT}).

The separable Lebesgue spaces are also very close to being ultrahomogeneous, although one has to exclude the even values of $p\ge 4$. For $p \notin 2\N+4$, $L_p(0,1)$ is approximately ultrahomogeneous \auh, meaning that any isometric map between finite dimensional subspaces may be approximated as accurately as wished by a (surjective) isometry (essentially due to Lusky \cite{Lusky}). The Gurarij space is also \auh \cite{KubisSolecki}. A conjecture formulated in \cite{FLMT} and which is our main interest in this paper is the following: is every separable \auh space isometric either to some $L_p(0,1)$ or to the Gurarij space?

Again isometric maps between subspaces of the \auh $L_p$ extend to isometries between respective appropriate $L_p$-subspaces ``enveloping" them, a consequence of Plotkin and Rudin equimeasurability theorem \cite{Plotkin, Rudin}, see also \cite{koldo} (the ``envelope" terminology is ours and does not appear in those works).  
Based on these examples, our aim is to define envelopes of subspaces in general Banach spaces, so that \auh spaces  can be proved to be ``ultrahomogeneous with respect to envelopes", in a precise sense to be defined later. The notion of envelope is then expected to be   useful to address the conjecture from \cite{FLMT}.

Let us note here another motivation for studying
\auh spaces. These are natural universal objects in the sense that two separable \auh are isometric as soon as they have the same finite dimensional subspaces up to isometry \cite[Proposition 2.22]{FLMT}. In fact, for a strengthening of the \auh property, called the {\em Fraïssé} property (see Definition \ref{fraisse}), there is a correspondence between these spaces and classes   of finite dimensional spaces with a natural amalgamation property, and, in this way, the separable Fraïssé Banach space corresponding to one of such classes $\mc F$ (called the Fraïssé limit of $\mc F$) is the ``generic'' universal space for all those spaces having dense subspaces consisting of an increasing union of elements in $\mc F$. Examples of this situation are precisely the Gurarij space and the spaces $L_p[0,1]$, for $p\notin 2 \N+4$, that are the generic universal spaces corresponding to the amalgamation classes of all separable   spaces, and the  class of $L_p$-spaces, respectively (see \cite{FLMT}). They are the only known examples, and it is unknown whether there exist \auh spaces which are not Fra\"iss\'e. Moreover, the Kechris-Pestov-Todorcevic correspondence for \auh Banach spaces characterizes  a fixed point property, called extreme amenability,  of their linear isometry group in terms of 
an approximate Ramsey property, of the collection of their finite dimensional subspaces. All the known \auh Banach spaces have this property (see \cite{Gr-Mi} for  $\ell_2$, \cite{Gi-Pe} and \cite{FLMT} for the other $L_p$'s, and \cite{BLMT} for the Gurarij space).     New examples of \auh Banach spaces could provide new extremely amenable groups; on the other hand if all separable \auh Banach spaces are isometric the $L_p$'s or the Gurarij, then, in some sense, their isometry groups appear as canonical extremely amenable groups of isometries of Banach spaces. 
Let us note finally that Fra\"iss\'e spaces must contain an isometric copy of the Hilbert space, \cite[Proposition 2.13.]{FLMT}, and therefore the study of the envelopes of hilbertian subspaces of  Fra\"iss\'e spaces will be particularly relevant.

Here is the definition of the isometric envelope $\env(Y)$ of a subspace $Y$ of $X$: it is the subspace of $x \in X$ such that $(T_i(x))_i$ converges to $x$ whenever the net $(T_i)_i$ in $\isome(X)$ converges pointwise on $Y$ to $\id_Y$. Under reflexivity we may  relate the isometric envelope to ergodic decompositions relative to semigroups of contractions, in the so-called Jacobs-de Leeuw- Glicksberg theory, as well as to results on Korovkin sets (see \cite{Wulbert}). In particular, we prove the fundamental result that if $X$ is reflexive strictly convex, then $\env(Y)$ is always $1$-complemented, and if $X$ is reflexive locally uniformly convex, then $\env(Y)$ is actually the smallest superspace of $Y$ complemented by a projection in the ``isometric hull" $\overline{\rm conv}(\isome(X))$. As a byproduct, the Hilbert space is the only separable reflexive space on which all subspaces are equal to their envelopes. We conclude Section 2 with a fundamental extension property of the envelope, which we state here for \auh spaces $X$: any partial isometry between subspaces $Y, Z$ of $X$ extends uniquely to a partial isometry between their envelopes $\env(Y)$ and $\env(Z)$; if $Y$ is separable then this extension map is SOT-SOT continuous and $\env(Y)$ is maximum among superspaces of $Y$ satisfying this property. Of particular interest are the ``full" subspaces of $X$, i.e. those whose envelope is equal to $X$: any partial isometry between full subspaces extends uniquely to a surjective isometry on $X$, and the isometry group of a full subspace embeds naturally into ${\rm Isom}(X)$. Thus we achieve the extension properties that envelopes were designed to satisfy.

In Section 3 we use the previous characterization to prove an important technical result: if $X$ is reflexive and separable, then the envelope $\env(Y)$ of the closure $Y$ of a directed sequence $\bigcup_i Y_i$ of subspaces coincides with the closure of the directed sequence $\bigcup_i \env(Y_i)$ of their envelopes.
We then investigate envelopes in $L_p$ spaces, and use the above to prove, for example, that if $Y$ is unital, the envelope coincides with some classical notions such as 
the sublattice generated by $Y$, the minimal $1$-complemented subspace containing $Y$, or the range of the conditional expectation associated to $Y$.
In particular, all $1$-complemented unital subspaces are envelopes in those spaces, and we prove that the $L_p$'s, $1<p<\infty$ are the only r.i. reflexive spaces on $[0,1]$ for which this happens.

After proving that, for $p \notin 2\N+4$, 
the  $L_p$'s contain a full copy of the Hilbert space, we apply the isometric extension results 
of Section 2 to study the associated exact sequence of Banach spaces and define objects such as the group embedding of $U(\ell_2)$ inside $\isome(L_p)$ or the associated ``full quotient" of $L_p$ by $\ell_2$. 

In the final section, we establish relations between the AUH or Fra\"iss\'e property and geometrical conditions  from local theory such as type or cotype, or complementations of euclidean or hilbertian subspaces. With these estimates we obtain ``local versions" of the known fact that $L_p$-spaces are Fra\"iss\'e if and only if $p \notin 2\N+4$. 
With this we reinforce the conjecture that all separable reflexive Fra\"iss\'e spaces must be $L_p$-spaces as well as identify natural intermediary steps towards proving this fact. We also answer a question of G. Godefroy by proving that any Fraïssé Banach space with a $C_\infty$-norm must be isomorphic to a Hilbert space.

We shall define envelopes in arbitrary Banach spaces, but with a focus on the reflexive case. We aimed at identifying the minimal hypotheses regarding the norm for each of our results (strict convexity/local uniform convexity, strict convexity of the dual, etc...). 
However it should be noted that \auh spaces are always
 uniformly convex and uniformly smooth as soon as they are reflexive. Therefore the reader mainly interested in the concept of envelope inside a reflexive \auh space $X$ can safely assume that the norm on $X$ is always uniformly convex and uniformly smooth.

\subsection{Notation}


Given a Banach space $X=(X,\nrm{\cdot})$ we shall write ${\rm Sph(X)}$ to denote the unit sphere of $X$ and $B_X$ to denote its unit ball. We write $\isomo(X)$ and $\isome(X)$ to denote the group of surjective linear isomorphisms and surjective linear isometries, respectively, ${\mathcal L}_1(X)$ to denote the semigroup of contractions on $X$, and if $X$ is a Banach lattice,  ${\mathcal L}_1^+(X)$ to denote the semigroup of positive contractions.

For the definition of classical properties of norms (strict convexity, uniform convexity, etc...) and related notions we refer to \cite{DGZ} and/or \cite{LT}. For completeness we recall the maybe slightly less classical notion of local uniform convexity. A Banach space $(X,\|.\|)$ is locally uniformly convex if $\forall x_0 \in {\rm Sph(X)}$ and $\forall \varepsilon>0$, there exists $\delta>0$ such that whenever $x \in B_X$,
$$\|\frac{x+x_0}{2}\|>1-\delta \Rightarrow \|x-x_0\|<\varepsilon.$$
Locally uniformly convex norms are also called locally uniformly rotund and for these reasons this property is abbreviated as ``LUR".

If $T$ is an operator on $X$ we denote by
${\rm Fix}(T)$ the closed subspace of fixed points of $T$, i.e. ${\rm Fix}(T)=\{x \in X: Tx=x\}$. If $S$ is a semigroup of operators on $X$ then
${\rm Fix}(S)$ denotes the closed subspace of fixed points of $S$, i.e.
${\rm Fix}(S):=
\bigcap_{T \in S}{\rm Fix}(T)$ is the largest subspace where all $T \in S$ act as the identity.
On the other hand, if $Y$ is a subspace of $X$, then  
${\rm Stab}_S(Y)$ denotes the semigroup of operators in $S$ acting as the identity on $Y$.


\subsubsection*{Acknowledgements}

We are grateful to  G. Godefroy and  W. T. Johnson for
helpful conversations and remarks.

\section{Envelopes}

An (abstract) envelope is a  set-valued map $e$ to every subset $A$ of a Banach space $X$ its envelope, i.e. a closed subspace $e(A)$ of $X$ satisfying the following properties
\begin{enumerate}[(a)]
\item $A\subseteq e(A)$.
\item If  $B \subseteq A$, then $ e(B) \subseteq e(A)$.
\item $e(\overline{\rm span}(A))=e(A)$.
\item $e(e(A))=e(A )$.
\end{enumerate}
So, $e$ is a standard (algebraic) closure operator with the additional property {\rm(c)}. 
Alternatively, any envelope map on $X$ may be fully described by the class ${\mathcal E}$ of closed subspaces of $X$ which are envelopes, and by the condition that $e(A)$ is the smallest element of ${\mathcal E}$ containing $A$ as a subset.
It is routine to check that a necessary and sufficient condition for a given class ${\mathcal E}$ to define an envelope map in this manner is to be stable by (finite or infinite) intersections.

Perhaps the more geometrical example of envelope is the {\em minimal} envelope that assigns to a subset, if it exists, the smallest superspace of it that is 1-complemented. This set-mapping is not always well defined: there are  examples of Banach spaces containing pairs of $1$-complemented subspaces whose intersection is not $1$-complemented (and therefore does not have a minimal envelope) - see W.B. Johnson's comment in Mathoverflow \cite{johnson-overflow} 
for a simple proof using pushout. However in reflexive spaces with strictly convex norm we have a positive result, which was also indicated to us by W.B. Johnson.

\begin{proposition}\label{inter}  Let $X$ be a reflexive space with strictly convex norm. Then any intersection of a family $(Y_i)_{i \in I}$ of $1$-complemented subspaces of $X$ must be $1$-complemented.
\end{proposition}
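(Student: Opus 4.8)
The plan is to reduce the statement to the case of two subspaces and then iterate via a compactness/net argument, since reflexivity will provide the needed limit points. First I would recall that in a reflexive space every closed subspace $Y$ that is $1$-complemented admits a norm-one projection $P_Y$ onto it; strict convexity of the norm should make this projection essentially canonical. The key observation is the following: if $Y$ is $1$-complemented with norm-one projection $P$, and $x \in X$ satisfies $\nrm{x} = \nrm{Px}$ (equivalently $d(x,\ker P)=\nrm{x}$ is attained along... ), then strict convexity forces a rigidity. More precisely, I expect the crucial lemma to be: \emph{if $X$ is reflexive and strictly convex and $P_1, P_2$ are norm-one projections, then there is a norm-one projection onto $\range P_1 \cap \range P_2$}, obtained as a suitable limit of alternating products $P_1 P_2 P_1 P_2 \cdots$ (an alternating projection / von Neumann-type scheme).

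The main steps, in order, would be: (1) Set $Q_n = (P_1 P_2)^n$. Each $Q_n$ is a contraction, so $(Q_n x)_n$ lies in a bounded, hence weakly relatively compact, set by reflexivity; extract a weak cluster point $Qx$ along a subnet (or show weak convergence). (2) Show that $Q := \lim Q_n$ (in the appropriate topology) is a projection onto $\range P_1 \cap \range P_2$: the inclusion $\range Q \subseteq \range P_1 \cap \range P_2$ needs that applying $P_1$ or $P_2$ to a limit vector does not change it, which is where strict convexity enters — if $\nrm{P_i y} = \nrm{y}$ but $P_i y \neq y$ one derives a contradiction with strict convexity by looking at $\tfrac12(y + P_i y)$ having norm $\nrm{y}$ while $y, P_i y$ are distinct points of the sphere (after normalizing). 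The reverse inclusion $\range P_1 \cap \range P_2 \subseteq \range Q$ is immediate since $Q_n$ fixes that intersection pointwise. (3) Having handled two subspaces, handle a finite family $Y_1,\dots,Y_k$ by induction, using that $Y_1 \cap \dots \cap Y_{k-1}$ is again $1$-complemented by the inductive hypothesis. (4) For an arbitrary family $(Y_i)_{i\in I}$, let $\mathcal{F}$ be the downward-directed family of finite sub-intersections $Y_F = \bigcap_{i \in F} Y_i$; each carries a norm-one projection $P_F$, and these form a net in the unit ball of $\mathcal{L}(X)$ for the weak operator topology. By reflexivity (boundedness plus the structure of $\mathcal{L}_1(X)$ acting on a reflexive space), extract a WOT-cluster point $P$; then check $P$ is a norm-one projection whose range is exactly $\bigcap_{i\in I} Y_i$, using strict convexity again for the "range lands inside each $Y_i$" direction and the fact that $P$ fixes $\bigcap Y_i$ pointwise for the other direction.

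I would expect the main obstacle to be step (2), and more generally pinning down in exactly which topology the relevant limits exist and why the limiting operator is idempotent. Weak convergence of $(Q_n x)$ does not by itself give that $Q$ is a projection; one typically wants $Q_n x \to Qx$ in norm, or at least $Q^2 = Q$, and the standard route is to show $\nrm{Q_n x}$ is non-increasing and its limit equals $\nrm{Qx}$, then invoke strict convexity (or a LUR-type argument) to upgrade weak to norm convergence on the relevant vectors. An alternative, cleaner route that sidesteps the alternating-projection bookkeeping is to pass through the machinery developed later in the paper: realize $\bigcap Y_i$ as a fixed-point space $\mr{Fix}(S)$ of the semigroup $S$ generated by $\{P_i\}$ (or by $\stab$ of a generating set) and invoke the mean ergodic / Jacobs--de Leeuw--Glicksberg projection onto $\mr{Fix}(S)$, which in a reflexive space exists and is a norm-one projection — but since the excerpt places Proposition~\ref{inter} before that theory is set up, I would present the self-contained alternating-projection argument and flag strict convexity as the sole place rigidity is used.
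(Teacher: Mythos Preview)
Your overall architecture --- handle finite subfamilies first, then pass to arbitrary families by taking a WOT-cluster point of the resulting net of contractive projections --- is exactly what the paper does. The discrepancy is in how you treat the finite case.

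The paper does \emph{not} use alternating products $(P_1P_2)^n$. Instead, for a finite $F\subseteq I$ it forms the \emph{average} $T_F=\frac{1}{\#F}\sum_{i\in F}p_i$. Strict convexity is then used in one clean line: if $T_Fx=x$, then $x$ is a genuine convex combination of the points $p_ix$, all lying in the ball of radius $\|x\|$, so strict convexity forces $p_ix=x$ for every $i\in F$; hence ${\rm Fix}(T_F)=\bigcap_{i\in F}Y_i$. Then Yosida's mean ergodic theorem (for a power-bounded operator on a reflexive space) gives that the Ces\`aro averages $\frac{1}{n}\sum_{k=0}^{n-1}T_F^k$ converge SOT to a contractive projection $p_F$ onto ${\rm Fix}(T_F)$. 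This sidesteps entirely the convergence analysis of alternating projections that you correctly flag as the main obstacle.

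Your worry that the mean ergodic route is circular because ``that theory is set up later'' is misplaced: the paper invokes Yosida's classical theorem \cite{yosida} as an external black box, not the JdLG machinery of Proposition~\ref{JdLG}. In fact the logical dependence runs the other way --- Proposition~\ref{JdLG} relies on the argument of Proposition~\ref{inter} via Remark~\ref{futureuse}. So the ``alternative, cleaner route'' you mention is available and is precisely what the paper does; you should take it rather than the alternating-projection path. Your step (4) is fine and matches the paper, though you should note explicitly that the range of the WOT-limit lands in each $Y_j$ because $Y_j$ is weakly closed and the net eventually has range inside $Y_j$.
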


\begin{proof}  Pick a family $(p_i)_{i \in I}$ such that $p_i$ is a contractive projection on $Y_i$ for each $i$.  Given any  non-empty finite $F \subseteq I$, strict convexity implies that the average $T_F:=(1/\#F)\sum_{i\in F}p_i$ of the projections $p_i$ for $i \in F$ has $\bigcap_{i \in F}Y_i$ as its space ${\rm Fix}(T_F)$ of fixed points.
 Consequently,  the Yosida's mean ergodic theorem for reflexive spaces and power-bounded operators \cite{yosida}  implies that the averages of powers of $T_F$ $(1/n \sum_{i=0}^{n-1} T_F^i)_n$ converges SOT to a contractive projection $p_F$ onto $\bigcap_{i \in F}Y_i$.
 By reflexivity we can pick a subnet of $(p_F)_{F\con I,\text{ finite}} $ that converges WOT. The limit will be a contractive projection onto $\bigcap_{i \in I}Y_i$. \end{proof}

 \begin{remark}\label{futureuse} 
  Let us note from the proof of Proposition \ref{inter} that if $X$ is reflexive strictly convex and $p_i$ is a contractive projection onto $Y_i$ for each $i \in I$, then a contractive projection onto $\bigcap_{i \in I}Y_i$ may be chosen in the WOT-closure of the convex hull of $\{p_i\}_{ i \in I}$.
 \end{remark}

\begin{definition}[Minimal envelope of a subspace]
Let $X$ be a Banach space, and $A$ be a subset of $X$. We let
$\env_\mr{min}(A)$ be the envelope defined as the smallest $1$-complemented subspace of $X$ containing $A$, when such subspace exists.
\end{definition}

\begin{remark}\label{isomenvmin} 
When $\env_{\rm min}(A)$ exists,  $\env_{\rm min}(TA)=T(\env_{\rm min}(A))$ for every  $T\in \isome(X)$. 
\end{remark}

From Proposition \ref{inter}, the minimal envelope is always well-defined in reflexive spaces with strictly convex norm.
Under the additional hypothesis that $X$ is strictly convex, then we also have that the contractive projection on a $1$-complemented subspace is unique \cite{CoS}.
In such spaces, the duality mapping plays a central role in understanding the contractive projection onto the minimal envelope.

\begin{definition} For a reflexive space $X$  such that $X, X^*$ are both strictly convex, we  let $J:\mr{Sph}(X) \mapsto \mr{Sph}({X^*})$ be the duality mapping, i.e. $\langle Jx,x\rangle=1$ for every $x \in \mr{Sph}(X)$. 
\end{definition}
It is convenient to extend $J$ to the whole of $X$ by homogeneity.   We recommend   the survey \cite{Beatasurvey}, and also  \cite[Section 4]{FR}, to better understand the relationship betweem $J$ and the isometry groups. In particular, there it is observed that $J$ is well defined, and that it is a  bijection between $\mr{Sph}(X)$ and $\mr{Sph}(X^*)$ whose inverse is the corrresponding duality mapping $J_*$ between $\mr{Sph}({X^*})$ and $\mr{Sph}(X)$; in addition, when $X$ and $X^*$ are assumed   to be locally uniformly rotund, then $J$  is an homeomorphism; finally, $JT={T^*}^{-1}J$ whenever $T \in \isome(X)$.

The next relations between $J$ and $1$-complemented subspaces are due to Calvert \cite{Calvert}, and Cohen and Sullivan \cite{CoS}, see \cite[Thm 5.5 and Thm. 5.6.]{Beatasurvey}.  
\begin{theorem} Let $X$ be reflexive strictly convex and with strictly convex dual.
A closed subspace $Y$ of $X$ is 1-complemented in $X$ if and only if $JY$ is a linear subspace of $X^*$. In this case there is a unique contractive projection of $X$ onto $Y$  which is associated to the decomposition
$X=Y \oplus (JY)^{\perp}$, and if
 $JY$ is closed then it is $1$-complemented, associated to
$X^*=JY \oplus (Y)^{\perp}.$
  \end{theorem}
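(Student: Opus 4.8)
The plan is to prove the three assertions in turn, treating the first equivalence as the core and deriving the two complementation statements from it. Throughout, recall that under our hypotheses ($X$ reflexive, $X$ and $X^*$ both strictly convex) the duality map $J$ is a well-defined bijection $\mr{Sph}(X)\to\mr{Sph}(X^*)$, extended to all of $X$ by homogeneity, and it intertwines isometries via $JT=(T^*)^{-1}J$.

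\textbf{The equivalence.} For the forward direction, suppose $Y$ is $1$-complemented, and let $p$ be the (unique, by strict convexity, \cite{CoS}) contractive projection of $X$ onto $Y$. The adjoint $p^*$ is then a contractive projection on $X^*$ with range $(\ker p)^\perp$ and kernel $Y^\perp$. The key identity to establish is that $J(Y)=\range(p^*)=(\ker p)^\perp$, which is a \emph{linear} subspace; this identity is a standard fact about contractive projections and duality mappings, and the quickest route is to show $Jy\in(\ker p)^\perp$ for $y\in\mr{Sph}(Y)$, i.e. $\langle Jy,z\rangle=0$ for all $z\in\ker p$. This follows from the fact that $y$ is a norm-minimal element of its coset $y+\ker p$ (because $p$ has norm one and fixes $y$), so $Jy$ annihilates the subspace $\ker p$ parallel to that coset. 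Then $J(Y)\subseteq(\ker p)^\perp$, and a dimension/reflexivity argument (apply the same reasoning to $p^*$ and $J_*$, or use that $J$ is a bijection on spheres) gives equality, hence linearity. For the converse, suppose $J(Y)$ is a linear subspace $Z$ of $X^*$. Since $X^*$ is reflexive and strictly convex, and since $Z$ will turn out to be norm-closed (this needs a small argument: $J$ restricted to $\mr{Sph}(Y)$ has image $\mr{Sph}(Z)\cap J(Y)$, and one checks closedness using that $Y$ is closed and $J$, $J_*$ are continuous on spheres in the LUR-adjacent setting — if closedness of $J(Y)$ is not automatic one argues with $\overline{Z}$ and recovers $Y$ at the end), we may apply the already-established forward direction with the roles of $X$ and $X^*$ swapped: $Z=J(Y)$ linear and closed implies $Z$ is $1$-complemented in $X^*$, and unwinding $J_*(Z)=Y$ via the bijectivity of $J$ shows $Y$ is $1$-complemented in $X$.

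\textbf{The projection and its decomposition.} Given $Y$ $1$-complemented, with $p$ the unique contractive projection onto $Y$, we have just seen $\ker p=(JY)^\perp$ (taking perps inside $X$, i.e. $\ker p=\{z: \langle f,z\rangle=0 \ \forall f\in JY\}$, which is what $(JY)^\perp$ denotes here). Hence $X=Y\oplus(JY)^\perp$ is precisely the decomposition $X=\range p\oplus\ker p$. Uniqueness of $p$ among contractive projections onto $Y$ is \cite{CoS} and requires nothing further.

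\textbf{The dual statement.} If in addition $JY$ is closed, then $JY$ is a closed linear subspace of the reflexive strictly convex space $X^*$ whose dual $X^{**}=X$ is also strictly convex; applying the forward direction in $X^*$ gives that $JY$ is $1$-complemented in $X^*$, with associated decomposition $X^*=JY\oplus(J_*(JY))^\perp=JY\oplus Y^\perp$, since $J_*(JY)=Y$. I expect the main obstacle to be the bookkeeping around closedness of $J(Y)$ and the two perp operations (one in $X^*$, one in $X^{**}=X$): the duality map is only norm-to-norm continuous on spheres under LUR, not under mere strict convexity, so if one wants the theorem under the stated weaker hypotheses one must argue linearity and closedness of $J(Y)$ by coset-minimization arguments rather than by continuity. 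Everything else is a direct translation through $J$ of standard facts about contractive projections, and since the statement is explicitly attributed to Calvert \cite{Calvert} and Cohen--Sullivan \cite{CoS} (with a pointer to \cite[Thm 5.5, Thm 5.6]{Beatasurvey}), it suffices to assemble these ingredients in the order above.
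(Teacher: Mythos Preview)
This theorem is not proved in the paper; it is quoted from Calvert \cite{Calvert} and Cohen--Sullivan \cite{CoS} via \cite[Thms 5.5, 5.6]{Beatasurvey} and used as a black box. So there is no in-paper argument to compare against, and I comment only on the internal soundness of your proposal.

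Your forward direction is correct in outline: smoothness of $X$ (a consequence of strict convexity of $X^*$) is what makes the step ``$y$ is norm-minimal in $y+\ker p$, hence $Jy$ annihilates $\ker p$'' work, and running the same argument on $p^*$ to obtain the reverse inclusion $(\ker p)^\perp\subseteq JY$ is the right move.

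Your converse, however, is circular as written. You claim that ``$Z=J(Y)$ linear and closed implies $Z$ is $1$-complemented in $X^*$'' by ``applying the already-established forward direction with the roles of $X$ and $X^*$ swapped''. But the forward direction is \emph{$1$-complemented $\Rightarrow$ $J$-image linear}; the implication you are invoking is \emph{$J$-image linear $\Rightarrow$ $1$-complemented}, which is exactly the converse you are trying to establish, merely transported to $X^*$. A non-circular argument proceeds directly: from $\langle Jy,y\rangle=\|y\|^2$ one gets $Y\cap(JY)^\perp=\{0\}$; the algebraic projection $p$ of $Y\oplus(JY)^\perp$ onto the first summand is contractive because $\|px\|^2=\langle J(px),px\rangle=\langle J(px),x\rangle\leq\|px\|\,\|x\|$ (using $x-px\in(JY)^\perp$); contractivity forces $Y\oplus(JY)^\perp$ to be closed; and finally this sum is all of $X$ because its annihilator $Y^\perp\cap\overline{JY}$ vanishes. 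This last step is where closedness of $JY$ enters, and your hesitation there is legitimate but can be dispatched: if $Jy_n\to f$ with $y_n\in\mr{Sph}(Y)$, pass to a weak cluster point $y\in Y$ of $(y_n)$; then $\langle f,y\rangle=\lim_n\langle f,y_n\rangle=\lim_n\langle Jy_n,y_n\rangle=1$ while $\|y\|\leq 1$, so $f=Jy$ by uniqueness of the norming functional. Thus $JY$ linear already forces $JY$ closed under the stated hypotheses --- but this has to be argued, not deferred, and it does not salvage the circular step.
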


%
%
%
%
%
%
%
%
%
%
%

Assume that the space $X$ is reflexive and both $X$ and $X^*$ have strictly convex norms. 
Of special interest is the  minimal  (contractive) projection associated to $\env_\mr{min}(Y)$.  Recall that a  {\em minimal contractive projection} on a Banach space is a contractive projection which is minimal with respect to the usual order $p \leq q$ i.e. if and only if $pq=qp=p$.  As we mentioned above under the current hypothesis on $X$, the intersection of 1-complemented subspaces is again 
 1-complemented,  but it is not so obvious how to find (if exists) a minimal contractive projection for a 1-complemented subspace.  To this end we describe the {\em mean ergodic} projection associated to a semigroup of contractions, which may be seen as part of the {\em Jacobs-de Leeuw-Glicksberg} (or JdLG) theory, see \cite[Chapters 8 and 16]{EFHN} for a reference. Recall that $\mr{Fix}(S)$ denotes the subspace of points which are fixed by all elements of a semigroup $S$ of operators on $X$.  We denote by $S^*$ the corresponding semigroup $\{s^*: s \in S\}$ of operators on $X^*$.

Part of the following description of the decomposition of the mean ergodic projection  was obtained in \cite{FR-isometries} under the restriction that $S$ is an isometry group and that $X^*$ is strictly convex.
We also include a description through the convex hull of the semigroup. Note that the description of the second summand  through duality differs from the classical JdLG description.

\begin{proposition}\label{JdLG}  Assume $X$ is reflexive strictly convex,  and that $S$ is a semigroup of contractions on $X$. Then
\begin{enumerate}[{\rm(1)}]
\item $\mr{Fix}(S)$ is $1$-complemented  by a projection in $\overline{\rm{conv}(S)}^\mr{WOT}$.
\end{enumerate}
If furthermore $X^*$ is assumed strictly convex, then
\begin{enumerate}[{\rm(2)}]
\item  The contractive projection onto $\mr{Fix}(S)$ is the unique minimal projection $p_S$ in $\overline{\rm{conv}(S)}^\mr{WOT}$, and we have the formula
$$p_{S}(x)= {\rm the\ unique\ element\ of\ }\mr{Fix}(S) \cap \overline{\rm conv}(S x).$$
\item[{\rm(3)}] The associated decomposition of $X$ is the $S$-invariant decomposition
$$X=\mr{Fix}(S) \oplus J(\mr{Fix}(S))^{\perp}=\mr{Fix}(S) \oplus \mr{Fix}(S^*)^{\perp}.$$ 
  \end{enumerate} \end{proposition}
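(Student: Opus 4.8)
The plan is to handle the three statements in order, building on Proposition \ref{inter} and its proof (Remark \ref{futureuse}) together with the Calvert--Cohen--Sullivan duality theorem.

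For (1), I would mimic the proof of Proposition \ref{inter} but directly with the semigroup $S$. Given a non-empty finite subset $F=\{s_1,\dots,s_k\}\subseteq S$, form the average $T_F=(1/k)\sum_{i} s_i$; strict convexity forces $\mr{Fix}(T_F)=\bigcap_{i}\mr{Fix}(s_i)$, because if $T_F x=x$ with $\nrm{x}=1$ then the $s_i x$ all lie on the sphere and average to $x$, so they coincide with $x$. Yosida's mean ergodic theorem applied to the power-bounded (indeed contractive) operator $T_F$ yields that $(1/n)\sum_{j=0}^{n-1}T_F^j$ converges SOT to a contractive projection $p_F$ onto $\mr{Fix}(T_F)$; note $p_F\in\overline{\rm conv}(S)^\mr{SOT}\subseteq\overline{\rm conv}(S)^\mr{WOT}$ since $S$ is a semigroup, so all powers and convex combinations of averages stay in $\conv(S)$. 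Then by reflexivity pass to a WOT-convergent subnet of $(p_F)_F$ directed by inclusion; one checks the WOT-limit $p$ is idempotent and contractive with range exactly $\mr{Fix}(S)=\bigcap_{s\in S}\mr{Fix}(s)$ — the inclusion $\mr{Fix}(S)\subseteq\range p$ is clear since each $p_F$ fixes $\mr{Fix}(S)$, and conversely any point in $\range p$ is fixed by every $s\in S$ because eventually $s\in F$. This gives a projection onto $\mr{Fix}(S)$ lying in $\overline{\rm conv}(S)^\mr{WOT}$.

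For (2) and (3), now assume $X^*$ strictly convex. Uniqueness of contractive projections onto a $1$-complemented subspace (Cohen--Sullivan, already cited) forces the projection from (1) to be the unique contractive projection $p_S$ onto $\mr{Fix}(S)$; in particular every contractive projection onto $\mr{Fix}(S)$ coincides with it, and since any minimal contractive projection $q$ in $\overline{\rm conv}(S)^\mr{WOT}$ has $\range q\supseteq\mr{Fix}(S)$ (as $q$ commutes with and is dominated by... — more carefully: $q$ is a WOT-limit of convex combinations of elements of $S$, so it fixes $\mr{Fix}(S)$, hence $p_S q=p_S$; minimality combined with $p_S\leq q$ would need the reverse, so instead argue $\range q$ is $S$-invariant and $q$ restricted there is again in the convex hull of the restricted semigroup, whose fixed space is still $\mr{Fix}(S)$, forcing $\range q=\mr{Fix}(S)$ by minimality), we get $q=p_S$. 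For the formula, fix $x$: the net $(p_F x)_F$ lies in $\overline{\rm conv}(Sx)$ (each $p_F x$ is a limit of convex combinations of $s(x)$, $s\in S$) and converges to $p_S x\in\mr{Fix}(S)$, so $p_S x\in\mr{Fix}(S)\cap\overline{\rm conv}(Sx)$; uniqueness of this point follows because two such points $y_1,y_2$ would both be fixed, so $p_S y_i=y_i$, while $p_S$ is constant on $\overline{\rm conv}(Sx)$ (it is affine, WOT-continuous, and $p_S(sx)=p_S x$ for all $s$), giving $y_1=p_S x=y_2$. Finally (3): apply the Calvert--Cohen--Sullivan theorem to $Y=\mr{Fix}(S)$, whose associated decomposition is $X=\mr{Fix}(S)\oplus (J\,\mr{Fix}(S))^\perp$; it remains to identify $(J\,\mr{Fix}(S))^\perp$ with $\mr{Fix}(S^*)^\perp$, equivalently $\overline{\rm span}\,J(\mr{Fix}(S))=\mr{Fix}(S^*)$. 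One inclusion: for $x\in\mr{Fix}(S)$ and $s\in S$ one has $s x=x$, and using $\langle Jx, x\rangle=\nrm{x}^2=\langle s^* Jx, x\rangle\le\nrm{s^*Jx}\nrm{x}\le\nrm{x}^2$ together with strict convexity of $X^*$ (the functionals $Jx$ and $s^*Jx$ both norm $x$ and have norm $\le\nrm{x}$) yields $s^*Jx=Jx$, so $J(\mr{Fix}(S))\subseteq\mr{Fix}(S^*)$, hence $\overline{\rm span}\,J(\mr{Fix}(S))\subseteq\mr{Fix}(S^*)$. For the reverse, dualize: $S^*$ is a semigroup of contractions on the reflexive strictly convex space $X^*$ (with strictly convex dual $X$), so by what we just proved $\mr{Fix}(S^{**})\supseteq$ nothing new, rather apply the same argument with roles swapped and use $J_*=J^{-1}$ to get $J_*(\mr{Fix}(S^*))\subseteq\mr{Fix}(S)$, i.e. $\mr{Fix}(S^*)\subseteq J(\mr{Fix}(S))$; combined with closedness under span this yields equality.

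The main obstacle I anticipate is the bookkeeping in (2): showing that the projection obtained as a WOT-subnet limit is genuinely \emph{minimal} in $\overline{\rm conv}(S)^\mr{WOT}$ and that \emph{every} minimal contractive projection in that set equals $p_S$, rather than merely that $p_S$ is \emph{a} contractive projection there. The clean route is via the Cohen--Sullivan uniqueness of contractive projections onto a fixed $1$-complemented subspace plus an $S$-invariance/restriction argument showing any candidate minimal projection has range exactly $\mr{Fix}(S)$; one must be careful that the restriction of a WOT-limit of convex combinations to an invariant complemented subspace again lies in the WOT-closed convex hull of the restricted semigroup. The identification $(J\,\mr{Fix}(S))^\perp=\mr{Fix}(S^*)^\perp$ in (3) is the other delicate point, but it reduces cleanly to the smoothness/strict-convexity argument $s^*Jx=Jx$ for $x\in\mr{Fix}(S)$ and its dual, as sketched.
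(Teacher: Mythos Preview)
Your argument for (1) is essentially the paper's (it too mimics the proof of Proposition~\ref{inter}); your treatment of (3) is the same in spirit but more explicit than the paper's one-line ``follows immediately from $J$ being a bijection'', and your smoothness argument $s^*Jx=Jx$ is exactly what is needed there.

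The genuine difference is in (2). The paper does not attempt a direct argument: it invokes the Jacobs--de Leeuw--Glicksberg theory from \cite{EFHN} (Theorems~8.33, 8.34 for the mean ergodic projection $p$ characterized by $px\in\overline{\conv}(Sx)$ and $pT=Tp=p$ for all $T\in S$, and Theorem~16.20 for existence and uniqueness of the minimal idempotent $p_S$ in $\overline{\conv(S)}^{\rm WOT}$), then observes $p\le p_S$ and $p\in\overline{\conv(S)}^{\rm WOT}$ to conclude $p=p_S$. Your attempt to bypass these references is close, but the passage you yourself flag is indeed incomplete: from ``$q$ fixes $\mr{Fix}(S)$'' you get $q\,p_S=p_S$, not $p_S\,q=p_S$ as you write, and you need \emph{both} for $p_S\le q$. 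The missing step is to show $p_S\,s=p_S$ for every $s\in S$: since $s$ acts as the identity on $\mr{Fix}(S)$ and $s\,p_S=p_S$, the map $p_S\,s$ is again a contractive idempotent with range exactly $\mr{Fix}(S)$, so Cohen--Sullivan uniqueness forces $p_S\,s=p_S$. Passing to convex combinations and WOT-limits gives $p_S\,T=T\,p_S=p_S$ for every $T\in\overline{\conv(S)}^{\rm WOT}$; hence $p_S\le q$ for every idempotent $q$ in that semigroup, so $p_S$ is the \emph{minimum} (not merely a minimal) projection there, and the formula $p_S(sx)=p_S x$ you need for uniqueness in the displayed description of $p_S(x)$ follows at once. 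With this one addition your self-contained route works and avoids the external citations; without it the minimality claim is unjustified.
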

\begin{proof}  We assume $X$ is reflexive and strictly convex and we prove
 (1). For any $T \in S$, the mean ergodic Theorem states that the averages of powers of $T$ converge to a contractive projection $p_T$ onto ${\rm Fix}(T)$; therefore $p_T$ belongs to the SOT closure of ${\rm conv}(S)$. By Remark \ref{futureuse}, there exists a contractive projection onto $\bigcap_{T \in S}{\rm Fix}(T)={\rm Fix}(S)$, belonging to the WOT-closure of the convex hull of the $p_T$'s, and therefore to the WOT-closure of ${\rm conv}(S)$.

We now assume $X^*$ is strictly convex and prove (2) and (3).
From the fact that $J$ is a bijection it follows immediately that $J(\mr{Fix}(S))=\mr{Fix}(S^*)$ is a linear subspace. Therefore the $1$-complementation of $\mr{Fix}(S)$ by a unique projection (called $q$ for now) satisfying decomposition (3) follow from previously mentioned \cite[Theorems 5.5 and 5.6.]{Beatasurvey}. 

By the  mean ergodic Theorem \cite[Theorem 8.34]{EFHN}, we know that $\overline{S}^\mr{WOT}$ is mean ergodic, and therefore by \cite[Theorem 8.33 b)]{EFHN}   determines uniquely the so-called mean ergodic projection onto $\rm{Fix}(S)=\rm{Fix}(\overline{S}^\mr{WOT})$ called $p$ for now, which is characterized by $px \in \overline{\rm conv}(S x)$ and $pT=Tp=p$ for all $T \in S$.
Strict convexity of $X^*$ imply by \cite{CoS} that $p=q$.

 The uniqueness and existence of the minimal idempotent $p_S$ 
 in  $\overline{{\rm conv}(S)}^\mr{WOT}$ is in \cite[Theorem 16.20]{EFHN}. 
Since $pT=Tp=p$ for all $T \in S$ it is immediate that this also holds for all $T \in \overline{{\rm conv}(S)}^\mr{WOT}$ and therefore for $p_S$, i.e. $p \leq p_S$. To conclude that $p=p_S$ it is now enough to observe that $p \in \overline{{\rm conv}(S)}^\mr{WOT}$ by (1).
\end{proof}

\begin{remark} 
 The condition that $X^*$ strictly convex is necessary for the uniqueness in (2) and the decomposition in (3). Indeed
 if $z$ is a point of $X$  of norm 1 admitting two norming functionals $\phi_0$ and $\phi_1$, then consider the two contractive projections onto the span of $z$ defined by $p_i(x)=\phi_i(x)z$, and let
 $S=\{p_0, p_1\}$. Note that ${\rm Fix}(S)=[z]$ and both $p_0$ and $p_1$ are projections onto it. Also for $i=0,1$, 
 $Fix(p_i^*)=[\phi_i]$,
 therefore ${\rm Fix}(S^*)=\{0\}$ and ${\rm Fix}(S^*)^{\perp}=X$.
\end{remark}

\subsection{The algebraic envelope}
  Recall that given a semigroup $S$ and a subset $Y$ of $X$,  the (point) $S$-stabilizer  $\mr{Stab}_S(Y)$  of $Y$  is the collection of $s\in S$ that acts as the identity on $Y$. Inspired by the decomposition in Proposition  \ref{JdLG} we define the following.

\begin{definition}[$S$-algebraic envelope] 
Given a subspace $Y$ of $X$ and a semigroup $S$ of operators on $X$, the 
{\em $S$-algebraic envelope} of $Y$ is defined as
$$\env_{S}^\mr{alg}(Y)={\rm Fix}(\mr{Stab}_S(Y)).$$
\end{definition}
In other words, the $S$-algebraic envelope of $Y$ is the subspace of points which are fixed by all operators of $S$ fixing each point of $Y$.

It is worth noting that if $J$ is a bijection between the unit spheres of $X$ and $X^*$, and $S$ a semigroup of contractions, then 
$(\mr{Stab}_S(Y))^*:=\conj{s^*}{s \in \mr{Stab}_S(Y)}=\mr{Stab}_{S^*}({JY})$. Note also that $\overline{\mr{Stab}_S(Y)}^{\mr{
WOT}} \subseteq \mr{Stab}_{\overline{S}^\mr{WOT}}(Y)$, but the equality does not seem to hold in general; unless $S$ is WOT-closed, in which case
$\mr{Stab}_{S}(Y)$ is also WOT-closed. The next is a consequence of Proposition 
 \ref{JdLG} and it relates the minimal and algebraic envelope.

\begin{proposition}\label{JdLG-alg}  Assume $X$ is reflexive strictly convex,   $S$ is a semigroup of contractions on $X$, and $Y$ is a closed subspace of $X$. Then
$\env_S^\mr{alg}(Y)$ is  $1$-complemented in $X$ by a contractive projection in
$\overline{{\rm conv}(\stab_S(Y))}^\mr{WOT}$.
If furthermore $X^*$ is strictly convex, then holds the $\stab_S(Y)$-in\-va\-riant decomposition
$$X=\env^\mr{alg}_S(Y) \oplus \env^\mr{alg}_{S^*}(JY)^{\perp},$$ 
and the  projection upon the first summand is the unique minimal projection of $\overline{{\rm conv}(\stab_S(Y))}^\mr{WOT}$.
\end{proposition}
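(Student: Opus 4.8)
The plan is to deduce Proposition \ref{JdLG-alg} directly from Proposition \ref{JdLG} by taking $S$ to be the point stabilizer $\stab_S(Y)$, which is again a semigroup of contractions on $X$. First I would observe that $\env_S^\mr{alg}(Y) = \mr{Fix}(\stab_S(Y))$ by definition, so part (1) of Proposition \ref{JdLG} applied to the semigroup $\stab_S(Y)$ immediately gives that $\env_S^\mr{alg}(Y)$ is $1$-complemented by a projection in $\overline{\mr{conv}(\stab_S(Y))}^\mr{WOT}$, which is exactly the first assertion. For this we only need $X$ reflexive and strictly convex, matching the hypotheses.

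For the second part, assume in addition that $X^*$ is strictly convex. Applying part (3) of Proposition \ref{JdLG} to the semigroup $\stab_S(Y)$ yields the invariant decomposition
$$X=\mr{Fix}(\stab_S(Y)) \oplus \mr{Fix}((\stab_S(Y))^*)^{\perp}.$$
The task is then to identify the second summand with $\env^\mr{alg}_{S^*}(JY)^{\perp}$. Here I would invoke the identity noted in the text just before the statement: since $J$ is a bijection between the unit spheres of $X$ and $X^*$ (which holds under the present reflexivity and strict convexity assumptions on $X$ and $X^*$), we have $(\stab_S(Y))^* = \stab_{S^*}(JY)$. Consequently $\mr{Fix}((\stab_S(Y))^*) = \mr{Fix}(\stab_{S^*}(JY)) = \env^\mr{alg}_{S^*}(JY)$, and the decomposition becomes $X = \env^\mr{alg}_S(Y) \oplus \env^\mr{alg}_{S^*}(JY)^{\perp}$, as claimed. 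The $\stab_S(Y)$-invariance of the decomposition is inherited from Proposition \ref{JdLG}(3). Finally, the statement that the projection onto the first summand is the unique minimal projection in $\overline{\mr{conv}(\stab_S(Y))}^\mr{WOT}$ is precisely part (2) of Proposition \ref{JdLG} applied to $\stab_S(Y)$.

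The only point requiring a little care — and the step I would flag as the main (minor) obstacle — is verifying the duality identity $(\stab_S(Y))^* = \stab_{S^*}(JY)$ cleanly, i.e. checking that $s$ acts as the identity on $Y$ if and only if $s^*$ acts as the identity on $JY$. One direction is formal: if $s\rest Y = \id_Y$ then for $y\in Y$ one has $\langle s^* (Jy), y\rangle = \langle Jy, sy\rangle = \langle Jy,y\rangle = 1$, and since $s^*$ is a contraction and $Jy$ is the unique norming functional of $y$ (strict convexity of $X^*$ gives uniqueness of norming functionals), $s^*(Jy) = Jy$; by homogeneity and linearity $s^*$ fixes $JY$ pointwise. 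The reverse direction follows symmetrically using that $J_* = J^{-1}$ and $s = (s^*)^*\rest X$ under reflexivity. Once this identity is in hand, the proposition is just a transcription of Proposition \ref{JdLG} with $S$ replaced by $\stab_S(Y)$, so the proof is short.
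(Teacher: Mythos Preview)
Your proposal is correct and follows essentially the same route as the paper: apply Proposition \ref{JdLG} with the semigroup $\stab_S(Y)$ in place of $S$, and identify the second summand via the duality identity $(\stab_S(Y))^* = \stab_{S^*}(JY)$. The paper merely quotes this identity (stated just before the proposition) and the chain $\env^\mr{alg}_{S^*}(JY)=\mr{Fix}((\stab_S(Y))^*)=J(\env^\mr{alg}_S(Y))$, whereas you helpfully spell out the verification using uniqueness of norming functionals; otherwise the arguments coincide.
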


\begin{proof} This is a consequence of Proposition \ref{JdLG} and of the fact that  $\env^\mr{alg}_S(Y)=\mr{Fix}(\stab_S(Y))$ and
that 
$\env^\mr{alg}_{S^*}(JY)=\mr{Fix}(\stab_{S^*}({JY}))=\mr{Fix}((\stab_S(Y))^*)=J(\mr{Fix}(\stab_S(Y)))
=J(\env^\mr{alg}_S(Y))$.
\end{proof}

\begin{remark}
  If $T \in S$ is an automorphism then $\env_S^\mr{alg}(T(Y))=T(\env_S^\mr{alg}(Y))$: note that ${\rm Stab}_S{TY}=T {\rm Stab}_S(Y) T^{-1}$
and that 
$ {\rm Fix}(TST^{-1}) =T({\rm Fix}(S)).$

\end{remark}

Even though the equality ${\rm Fix}(S)={\rm Fix}({\rm conv}(S))$  always holds, the algebraic envelopes of a semigroup $S$ and of its convex hull do not seem to coincide in general. Indeed
we have ${\rm conv}({\rm Stab}_S(Y)) \subseteq{\rm Stab}_{{\rm conv}(S)}(Y)$ but no reason to assume equality, unless under additional hypotheses:

\begin{proposition}\label{algconvG}
Let $X$ be a space with strictly convex norm, let $S$ be a semigroup of contractions on $X$, and let $Y$ be a subspace of $X$. Then 
$$\env_S^{\mr{alg}}(Y)=\env_{\mr{conv}(S)}^{\mr{alg}}(Y).$$
\end{proposition}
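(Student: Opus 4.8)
The plan is to prove the two inclusions separately. Since $S \subseteq \mathrm{conv}(S)$, we have $\mathrm{Stab}_S(Y) \subseteq \mathrm{Stab}_{\mathrm{conv}(S)}(Y)$, hence $\mathrm{Fix}(\mathrm{Stab}_{\mathrm{conv}(S)}(Y)) \subseteq \mathrm{Fix}(\mathrm{Stab}_S(Y))$, i.e. $\env_{\mathrm{conv}(S)}^{\mr{alg}}(Y) \subseteq \env_S^{\mr{alg}}(Y)$. This direction is free. The content is the reverse inclusion $\env_S^{\mr{alg}}(Y) \subseteq \env_{\mathrm{conv}(S)}^{\mr{alg}}(Y)$, and for this it suffices to show that every $T \in \mathrm{conv}(S)$ that fixes $Y$ pointwise also fixes $\env_S^{\mr{alg}}(Y) = \mathrm{Fix}(\mathrm{Stab}_S(Y))$ pointwise.

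So let $T = \sum_{k=1}^n \lambda_k s_k$ with $s_k \in S$, $\lambda_k > 0$, $\sum \lambda_k = 1$, and suppose $Ty = y$ for all $y \in Y$. First I would fix $y \in Y$ with $\|y\|=1$ and observe that $y = Ty = \sum_k \lambda_k s_k y$ is a convex combination of the vectors $s_k y$, each of which lies in $B_X$ since the $s_k$ are contractions. Strict convexity of the norm forces all the $s_k y$ to be equal, hence equal to $y$; so each $s_k$ fixes $y$. Running this over all $y \in Y$ shows $s_k \in \mathrm{Stab}_S(Y)$ for every $k$. (A small point: for $y=0$ there is nothing to prove, and for general $y \neq 0$ one normalizes; strict convexity gives that an extreme point of $B_X$ cannot be a nontrivial convex combination of points of $B_X$, and every point of $\mathrm{Sph}(X)$ is extreme when the norm is strictly convex.)

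Now that each $s_k \in \mathrm{Stab}_S(Y)$, take any $x \in \env_S^{\mr{alg}}(Y) = \mathrm{Fix}(\mathrm{Stab}_S(Y))$. By definition $s_k x = x$ for every $k$, hence $Tx = \sum_k \lambda_k s_k x = \sum_k \lambda_k x = x$. Thus $T$ fixes $\env_S^{\mr{alg}}(Y)$ pointwise. Since $T \in \mathrm{Stab}_{\mathrm{conv}(S)}(Y)$ was arbitrary, $\env_S^{\mr{alg}}(Y) \subseteq \mathrm{Fix}(\mathrm{Stab}_{\mathrm{conv}(S)}(Y)) = \env_{\mathrm{conv}(S)}^{\mr{alg}}(Y)$, completing the proof.

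The only step requiring care — and the "main obstacle," though it is mild — is the use of strict convexity to pass from $\sum_k \lambda_k s_k y = y$ to $s_k y = y$ for each $k$: one must be sure the $s_k y$ lie in the closed unit ball (true since the $s_k$ are contractions and $\|y\| \le 1$) and invoke that points of the unit sphere are extreme points under strict convexity. No reflexivity, duality, or completeness is needed here, which is consistent with the hypotheses of the proposition being weaker than those of Proposition~\ref{JdLG}. I would write this up in a few lines without belaboring the extreme-point fact.
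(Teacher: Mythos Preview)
Your proof is correct and follows essentially the same route as the paper's: the key step in both is that strict convexity forces each $s_k$ in the convex combination $T=\sum_k \lambda_k s_k \in \mr{Stab}_{\mr{conv}(S)}(Y)$ to lie in $\mr{Stab}_S(Y)$. The paper packages this as the set equality $\mr{conv}(\mr{Stab}_S(Y))=\mr{Stab}_{\mr{conv}(S)}(Y)$ and then invokes the general fact $\mr{Fix}(S)=\mr{Fix}(\mr{conv}(S))$, whereas you unfold the last step explicitly; the content is identical.
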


\begin{proof}It is enough to prove
that ${\rm conv}({\rm Stab}_S(Y)) ={\rm Stab}_{{\rm conv}(S)}(Y)$, the direct inclusion of which always holds. 
Assume $T \in \mr{conv}(S)$ acts as the identity on $Y$. Writing $T$ as a convex combination
$\sum_k \lambda_k s_k$, $s_k \in S$, we see that for any $y \in Y$,
$y=\sum_k \lambda_k s_k (y)$. By strict convexity and the fact that the $s_k$'s are contractions, we deduce $s_k(y)=y$ for every $k$ and $y \in Y$, i.e. $s_k \in {\rm Stab}_S(Y)$. This means that $T$ belongs to ${\rm conv}({\rm Stab}_S(Y))$.  We have proved that
${\rm conv}({\rm Stab}_S(Y)) \supseteq {\rm Stab}_{{\rm conv}(S)}(Y)$.
\end{proof}

\begin{definition}[Algebraic envelope] The {\em algebraic envelope} of a subspace $Y$ of $X$, denoted by $\env^\mr{alg}(Y)$, is the subspace $\env^\mr{alg}_{\isome (X)}(Y)$. 
\end{definition}

\subsection{Korovkin and isometric envelopes} 
In some aspects the algebraic envelope seems to be ``too rigid" and does not capture topological properties of a semigroup. We introduce a (possibly) smaller topological modification of the algebraic envelope.

\begin{definition}[Korovkin envelope]\label{eq} Let $X$ be a Banach space and $S$ be a bounded semigroup of operators. 
We define the (Korovkin) {\em envelope} $\env_S(A)$ of a subset $A$ of $X$ as the set of $x \in X$ such that whenever a net $(T_i)_{i\in I}$ in  $S$ converges pointwise on $A$ to $\id_A$, then the net $(T_i(x))_{i\in I}$ converges to $x$. 

\end{definition}  
It is straightforward that $Y \mapsto \env_S(Y)$ is an envelope map. Also, we have the following inequalitiles.
 $$ \env_S(Y) \subseteq \env_{\overline{S}^\mr{SOT}}^\mr{alg}(Y) \subseteq \env_S^\mr{alg}(Y).$$
To see this, for suppose that $x$ belong to  $\env_S(Y)$, and assume that $T=\lim_i T_i$,  with each $T_i \in \overline{S}^\mr{SOT}$  and that $T\rest Y=Id_Y$.  Then $(T_i\rest Y)_{i}$ tends SOT to $\id_Y$, and since $x\in \env_S(Y)$, by definition of the Korovkin envelope, we have that  $(T_i x)_i$ converges in norm to $x$, and consequently $Tx=x$.

The name we chose for this envelope  has the following explanation. We recall that we denote by  ${\mathcal L}_1(X)$ the semigroup of contractions on $X$, and if $X$ is a Banach lattice, by ${\mathcal L}_1^+(X)$ the semigroup of positive contractions.
When $S$ is equal to ${\mathcal L}_1(X)$ (resp. ${\mathcal L}_1^+(X)$), results on envelope maps have been obtained by several authors as extensions of {\em Korovkin theorems}, as described for example in \cite{Bernau}, \cite{BerensLorentz}. Namely, if 
$\mathbf T=(T_i)_{i\in I}$ is a net of contractions, the {\em convergence set} $C_{\mathbf T}$ is defined as
$$C_{\mathbf T}:=\{x\in X\,:\, (T_i(x))_{i\in I} {\rm \ converges\ to\ } x\},$$
and if $A$ is a subset of $X$, the {\em shadow} $\Gamma_1(A)$  (resp. $\Gamma_1^{+}(A)$)) of $A$ is the intersection of all convergence sets (resp. of positive contractions) containing $A$ (in the 1994 survey of Altomare and Campiti \cite[Def 3.1.1. p 142.]{AltomareCampiti},  the term {\em Korovkin closure} is also used). In other words, $\Gamma_1(A)$ coincides with the envelope
$\env_{{\mathcal L}_1(X)}(A)$ and $\Gamma_1^+(A)$  with 
$\env_{{\mathcal L}_1^+(X)}(A)$.

We use the terminology  {\em Korovkin set} (resp. Korovkin$^{+}$ set) in $X$ for a set $A$ for which $\Gamma_1(A)=X$
(resp. $\Gamma_1^+(A)=X$).
Korovkin Theorem \cite{Korovkin} states that $\{1,t,t^2\}$ is a Korovkin$^{+}$ set in $C(0,1)$. This was later improved to being a Korovkin set
by Wulbert \cite{Wulbert} and independently, Saskin \cite{Saskin}. 
Bernau \cite{Bernau} proves that
$\{1,t\}$ is a Korovkin set in $L_p(0,1), 1<p<+\infty$, and Berens-Lorentz \cite{BerensLorentz} for $p=1$. In \cite{Calvert}, Calvert obtains that if $X$ is a reflexive space with locally uniformly convex norm and dual norm, then convergence sets are ranges of linear contractive projections, and the envelope $\Gamma_1(A)$  is the smallest $1$-complemented subspace containing $A$, i.e. $\Gamma_1(A)=\env_\mr{min}(A)$, \cite{Calvert} Corollary 2. We shall revisit these results in the present section.

\begin{definition}[Isometric envelope]
When $G$ is the isometry group of $(X,\|\cdot\|)$ then $\env_G(Y)$ is called the {\rm (isometric) envelope} of $Y$ in $X$, and we denote it by $\env_{\|\cdot\|}(Y)$, or simply $\env(Y)$, when the norm on $X$ is clear from the context. 
\end{definition}

It will be relevant to give a name to those subspaces whose envelope coincides with the whole space:

\begin{definition}[Full envelope, full subspace] A subspace  $Y$   of $X$ has {\em full envelope}, or it is a {\em full subspace} of $X$, when  $\env(Y)=X$.  \end{definition}

When $G$ is a bounded group of isomorphisms, we
 have the following   reformulation of the definition of the envelope $\env_G(A)$.
Because of item c) in the definition of envelope, from now on we only consider envelopes of closed subspaces $Y$ of a Banach space $X$.

\prop\label{pfpfpfprrr} Let $X$ be a Banach space and $Y$ a closed subspace of $X$.
If $G$ is a bounded group of {\em isomorphisms} on $X$, then a point $x\in X$ belongs to   $\env_G(Y)$   if and only if whenever a net $(T_i)_{i\in I}$ in $G$ converges pointwise  on $Y$, then $(T_i (x))_{i\in I}$ also converges.

\fprop 

\begin{proof} Let $G$ be a bounded group of isomorphisms on $X$. Assume that $x$ satisfies the last condition, and let us see that $x\in \env_G(Y)$, so suppose that  $(T_i)_{i\in I}$ is a net that converges pointwise  on $Y$ to $\id_Y$.  We consider the set $J:=I\times \{0,1\}$ lexicographically ordered, and the net $(U_{(i,j)})_{(i,j)\in J}$, $U_{(i,0)}:= \id$ and $U_{(i,1)}:=T_i$ for all $i$. It follows that $(U_{(i,j)})_{(i,j)}$ pointwise converges in $Y$ to $\id_Y$, so by hypothesis, $(U_{(i,j)}(x))_{(i,j)}$ also converges, and the limit must be $x$. This means in particular that $(U_i(x))_i$ converges to $x$.

Suppose now that $x$ does not satisfy the last condition. We can find a net  $(T_i)_{i\in I}$ in $G$ converging pointwise on $Y$ and some $\vep>0$ such that the subset $J\con I$ of those $i$ such that there are $j_i, k_i \ge i$ such that $\nrm{T_{j_i}(x)-T_{k_i}(x)}\ge \vep$. This implies that, $\nrm{x- T_{j_i}^{-1}(T_{k_i}(x))}\ge \vep /\sup_{T\in G} \nrm{T} $ for every $i\in J$. For each $i\in J$ let $U_i:= T_{j_i}^{-1}\circ T_{k_i}\in G$. Then the net $(U_i)_{i\in J}$ in $G$ converges pointwise on $Y$ to the identity of $Y$ but $(U_i(x))_{i\in J}$ does not converge to $x$, so $x\notin \env_G(Y)$.   
\end{proof}

%
%
 So, $\env_G(Y)$ is the maximum superset  $Z$ of $Y$ so that  pointwise convergence on $Y$ of elements of $G$  (to a map $t: Y \rightarrow X$ which does not necessarily extend to an element of $G$)   implies pointwise convergence of these elements on $Z$ (necessarily to an extension of $t$).

Note that when $S \subseteq S'$ are bounded semigroups then $\env_{S'}(Y) \subseteq \env_S(Y)$, and also that whenever $T \in S$ is an automorphism, then $\env_S(TY)=T(\env_S(Y))$.

\begin{example}\label{hilbert} If $X=H$ is a Hilbert space and $Y$ is a closed subspace of it  then $\env(Y)=Y$. Indeed we can decompose $H$ as $Y \oplus Y^{\perp}$ and  if $x$ is not in $Y$, then let $z={\rm proj}_{Y^{\perp}} x \neq 0$ and define $T_n=\id \oplus -\id$. This proves that $x$ is not in $\env(Y)$. \end{example}
 
 A group of isomorphisms $G$ on a space $X$ is said to be trivial if all elements of $G$ are multiples of the identity map.

\begin{example} If the isometry group $\isome(X)$ is trivial,  then  any  subspace $Y$ of $X$ of dimension at least $1$ has envelope equal to $X$:  if  a net $(T_i)_{i\in I}$ converges pointwise on $Y$, then each $T_i=\lambda_i\id_X$ with $(\lambda_i)_{i\in I}$ a converging net; thus, $(T_i)_{i\in I}$ converges pointwise on the whole of $X$.
\end{example}

  As in the case of the algebraic envelope, in general the Korovkin envelope of a semigroup $S$ does not seem to need to coincide with the envelope of its WOT-closure or of its convex hull. There is however such an identification under mild topological conditions on the norm. The hypothesis of the norm being locally uniformly convex (LUR) in the next proposition   should be related to  the fact that the Korovkin envelope is of a topological nature. 

\begin{proposition}\label{convG}  
Let $X$ be a space with a LUR norm and let $S$ be a semigroup of contractions on $X$. Let $Y$ be a subspace of $X$. Then 
$$\env_S(Y)=\env_{\overline{\mr{conv}(S)}^{\mr{WOT}}}(Y).$$
\end{proposition}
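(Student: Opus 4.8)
The plan is to prove the two inclusions separately. Since $S\subseteq\overline{\mathrm{conv}(S)}^{\mathrm{WOT}}$ and both are bounded semigroups of operators, the general monotonicity remark (if $S\subseteq S'$ then $\env_{S'}(Y)\subseteq\env_S(Y)$) immediately gives $\env_{\overline{\mathrm{conv}(S)}^{\mathrm{WOT}}}(Y)\subseteq\env_S(Y)$. So the whole content is the reverse inclusion: if $x\in\env_S(Y)$ and a net $(T_i)_{i\in I}$ in $\overline{\mathrm{conv}(S)}^{\mathrm{WOT}}$ converges pointwise (in norm) on $Y$ to $\id_Y$, then $(T_i x)_i\to x$.

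First I would reduce the general net in $\overline{\mathrm{conv}(S)}^{\mathrm{WOT}}$ to nets in $\mathrm{conv}(S)$. Each $T_i$ is a WOT-limit of a net $(U^{(i)}_\lambda)_\lambda$ in $\mathrm{conv}(S)$. Using a standard iterated-limit/diagonal argument on the product directed set (as done e.g. in the proof of Proposition~\ref{pfpfpfprrr}), together with the fact that all operators involved are contractions, one replaces $(T_i)_i$ by a net in $\mathrm{conv}(S)$ which still converges WOT on $Y$ to $\id_Y$; by passing to a suitable relabelling we may moreover assume it converges WOT on $Y$ to $\id_Y$ and it suffices to show any such net admits a subnet $(V_j)_j$ in $\mathrm{conv}(S)$ converging in norm on $Y$ to $\id_Y$ — for then $x\in\env_S(Y)$ forces $V_j x\to x$, and since $\env_S(Y)=\env_{\mathrm{conv}(S)}^{\mathrm{alg}}$-type arguments are not quite enough here, one must instead observe that it is enough to show $T_i x\to x$ along the original net, which will follow once one shows that \emph{WOT-convergence on $Y$ to $\id_Y$ can be upgraded to norm-convergence on $Y$} after passing to convex combinations, using reflexivity (LUR spaces with WOT-convergent nets) and a Mazur-type argument.

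The key technical step, where LUR is used, is precisely this upgrade: given $(V_j)$ in $\mathrm{conv}(S)$ with $V_j y\rightharpoonup y$ weakly for each $y\in Y$, produce further convex combinations $W_k$ of the $V_j$'s with $W_k y\to y$ in norm. For a single $y\in\mathrm{Sph}(Y)$ this is Mazur's lemma applied to the weakly convergent sequence $(V_j y)$, whose weak limit is $y$; the convex combinations have norm $\le 1$ (contractions), converge weakly to $y\in\mathrm{Sph}(X)$, hence by LUR (equivalently, by the Radon–Riesz/Kadec–Klee property that LUR implies) converge to $y$ in norm. One then iterates/diagonalizes over a dense sequence of directions in $Y$ (here one uses separability of $Y$, or handles the general case by working with finitely many vectors at a time since the envelope condition only tests one $x$ and pointwise convergence is checked vectorwise) to get $W_k\to\id_Y$ in norm on a dense subset of $Y$, hence on all of $Y$ by the uniform boundedness of the $W_k$. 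Finally, since each $W_k\in\mathrm{conv}(S)$ and $x\in\env_S(Y)$, we get $W_k x\to x$; tracing back through how the $W_k$ were extracted from the original net $(T_i)$ and using that the $T_i$ are contractions, one concludes $T_i x\to x$, i.e. $x\in\env_{\overline{\mathrm{conv}(S)}^{\mathrm{WOT}}}(Y)$.

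I expect the main obstacle to be the bookkeeping in passing from a WOT-convergent net in $\overline{\mathrm{conv}(S)}^{\mathrm{WOT}}$ to a norm-convergent net in $\mathrm{conv}(S)$ while keeping control of the action on the \emph{fixed} vector $x$: the Mazur/LUR argument only directly controls behaviour on $Y$, and one has to argue that failure of $T_i x\to x$ along the original net would, after the same convex-combination and subnet extraction, produce $W_k$ in $\mathrm{conv}(S)$ converging in norm to $\id_Y$ on $Y$ but with $W_k x\not\to x$, contradicting $x\in\env_S(Y)$. Setting this up cleanly — most likely by a proof-by-contradiction that fixes $\varepsilon>0$ and a cofinal set of bad indices, as in the second half of the proof of Proposition~\ref{pfpfpfprrr} — is the delicate point; the LUR input itself is routine once the reduction is in place.
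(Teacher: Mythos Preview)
Your proposal has a genuine gap at the crucial step. You reduce to a net $(W_k)$ in $\mathrm{conv}(S)$ converging in norm on $Y$ to $\id_Y$, and then write ``since each $W_k\in\mathrm{conv}(S)$ and $x\in\env_S(Y)$, we get $W_kx\to x$''. But this inference is not available: the hypothesis $x\in\env_S(Y)$ only controls nets \emph{in $S$}, not in $\mathrm{conv}(S)$. What you would need at this point is precisely $\env_S(Y)\subseteq\env_{\mathrm{conv}(S)}(Y)$, which is itself (a special case of) the statement to be proved. Your Mazur/Kadec--Klee machinery never produces a net in $S$ itself, so you never get to invoke the only hypothesis you have on $x$. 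The same problem recurs in your proof-by-contradiction sketch at the end.

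The paper's argument addresses exactly this point, and its use of LUR is quite different from yours. Rather than upgrading weak to norm convergence, the paper uses LUR to \emph{de-average} a convex combination. Given a single $T\in\overline{\mathrm{conv}(S)}^{\mathrm{WOT}}$ with $\|Ty_i-y_i\|$ small for finitely many normalized $y_1,\ldots,y_n\in Y$, one approximates $T$ by $\sum_k\lambda_k g_k$ with $g_k\in S$, and shows via LUR (together with norming functionals for the $y_i$) that the set of indices $k$ for which $g_ky_i$ is close to $y_i$ for \emph{all} $i$ carries most of the mass $\sum_k\lambda_k$. For those good $k$ one applies the (finite, uniform) form of $x\in\env_S(Y)$ to each individual $g_k\in S$ to get $\|g_kx-x\|$ small, and averaging back gives $\|Tx-x\|$ small. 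The point is that the envelope condition is applied elementwise to members of $S$, never to convex combinations; LUR is what makes this splitting possible. Your Mazur-based route does not achieve this reduction.
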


\begin{proof} 
Only the direct inclusion is not trivial.
Let $x$ be normalized in $\env_S(Y)$ and fix $\varepsilon>0$.
By definition of $\env_S(Y)$, there exist normalized
$y_1,\ldots,y_n$  in $Y$ and $\de>0$  such that for any $g \in S$,
$$\text{ if $\|g(y_i)-y_i\| \leq \de$ for every $i=1,\ldots,n$, then $\|gx-x\| \leq \varepsilon.$} $$
We use the LUR property  of the norm to choose for each $i=1,\ldots,n$ some $\delta_i>0$  such that  if $\nrm{z}\le 1$ is such that  $\|z+y_i\| \geq 2-\delta_i$ then $ \|z-y_i\| \leq \de$. 
Let $\beta>0$ be small enough so that $\sqrt{2\beta} \leq \min_i \delta_i$, $\beta \leq \varepsilon$
and $2n\sqrt{2\beta} \leq \varepsilon$.
We claim that whenever $T \in \overline{\mr{conv}(S)}^\mr{WOT}$ satisfies that
$\max_i\|Ty_i-y_i\| \leq \beta$, then it follows that 
$\|Tx-x\| \leq 3\varepsilon$. This implies that whenever $(T_i)_{i\in I}$ is a net in $\overline{\mr{conv}(S)}^\mr{WOT}$ converging pointwise to the identity on $Y$, then $(T_i(x))_{i\in I}$ converges to $x$, and this means that $x$ belongs to $\env_{\overline{\mr{conv}(S)}^\mr{WOT}}(Y)$. This therefore proves the required direct inclusion.

We now give the proof of the claim. For $i=1,\ldots,n$ let $\phi_i$ be a normalized functional norming $y_i$, and let $\phi_0$ be a normalized functional norming $Tx-x$. Denote $y_0:=x$ and consider a convex combination
$\sum_{k \in K} \lambda_k g_k$ with each $g_k \in S$, such that $|\langle\phi_i,(T-\sum_{k \in K} \lambda_k g_k) y_i\rangle| \leq \beta$ for $i=0,\ldots,n$.  Note that for every $i=1,\ldots,n$,
\begin{equation}\label{oi3j4iorj4r4}
    1-2\beta \leq \phi_i(T(y_i))-\beta \leq \sum_{k \in K} \lambda_k \phi_i(g_k (y_i)).
\end{equation}
For each $i=1,\dots,n$, let $A_i$ be the set of indices $k\in K$ such that $\phi_i(g_k (y_i)) \geq 1-\sqrt{2\beta}$ and let $B_i=K \setminus A_i$ be the complement of $A_i$.
Note that for $k \in A_i$, we have that $\|g_k (y_i) - y_i\| \leq \de$.
Also from the above equation \eqref{oi3j4iorj4r4} we have
$$1-2\beta \leq \sum_{k \in A_i} \lambda_k +(1-\sqrt{2\beta})\sum_{k \in B_i} \lambda_k,$$
from which it is immediate to deduce 
$$\sum_{k \in B_i}\lambda_k \leq \sqrt{2\beta}.$$
Finally let $A=\bigcap_{i=1}^n A_i$ and $B=\bigcup_{i=1}^n B_i$.
Whenever $k \in A$, we have $\|g_k y_i - y_i\| \leq \de$ for all $i=1,\ldots,n$, and
therefore $\|g_k x -x \| \leq \varepsilon$, by the choice of $\de$.
Using this estimate, we compute
$$
\|Tx-x\|=\phi_0(Tx-x) \leq \beta+\sum_{k \in K}\lambda_k \|g_k x-x\|
\leq \beta+(\sum_{k \in A}\lambda_k)\varepsilon+ 2\sum_{i=1}^n \sum_{k \in B_i}
\lambda_k \leq 2\varepsilon+2n\sqrt{\beta} \leq 3\varepsilon,$$
and this concludes the proof of the claim.
\end{proof}

\begin{remark}\label{remrem} An interesting consequence of Proposition \ref{convG} is that if $Y$ is a subspace of an LUR space $X$ and $S$ a semigroup of contractions, then $\env_S(Y)$ is contained in the range of any contractive projection $p \in \overline{{\rm conv}(S)}^{\rm WOT}$ acting as the identity on $Y$. Indeed 
from $p\rest Y =Id_Y$ it follows that $px=x$ for any $x \in \env_{\overline{{\rm conv}(S)}^{\rm WOT}}(Y)=\env_S(Y)$. \end{remark}

We now show that under natural conditions on the space (such as reflexivity), the Korovkin envelope may be seen as an algebraic envelope.
This is important to obtain a JdLG decomposition associated to this envelope. We shall use the following facts. If $X$ is reflexive (actually the point of continuity property is enough) and if $G$ is a bounded group of automorphisms on $X$, then the weak and norm topologies coincide on each $G$-orbit of  a non-zero point $x_0$ of $X$  \cite[ Theorem 2.5]{M}, applied for $G$ equipped with the topology of weak convergence in the point $x_0$.  
Also it is a classical and easy fact that if the norm on $X$ is locally uniformly rotund (LUR), then weak convergence of a net on the unit ball of $X$ to a point of the sphere, implies strong convergence to this point. See \cite[Remark 2.4.]{AFGR}  for more details on both these facts.

\begin{proposition}\label{WOT} Let  $X$ be a reflexive space. Assume that either $S$ is a bounded group of isomorphisms on $X$, or that $X$ is  LUR and $S$ is a semigroup of contractions. 
Then
 $$\env_S(Y)=\env^\mr{alg}_{\overline{S}^\mr{WOT}}(Y).$$
In particular the two envelopes coincide as soon as $S$ is WOT-closed.
\end{proposition}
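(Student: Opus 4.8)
The plan is to prove the two inclusions $\env_S(Y)\subseteq\env^\mr{alg}_{\overline{S}^\mr{WOT}}(Y)$ and $\env^\mr{alg}_{\overline{S}^\mr{WOT}}(Y)\subseteq\env_S(Y)$ separately. The first inclusion is essentially free: it is the chain of inequalities recorded just after Definition \ref{eq}, since $\overline S^{\mr{WOT}}\supseteq\overline S^{\mr{SOT}}$ gives $\env_S(Y)\subseteq\env^\mr{alg}_{\overline S^{\mr{SOT}}}(Y)$, and one checks directly that a $\mr{WOT}$-limit of elements of $\mr{Stab}_S(Y)$ still fixes $Y$ pointwise, so $\overline{\mr{Stab}_S(Y)}^{\mr{WOT}}\subseteq\mr{Stab}_{\overline S^{\mr{WOT}}}(Y)$, whence $\env^\mr{alg}_{\overline S^{\mr{WOT}}}(Y)\subseteq\env^\mr{alg}_{\overline S^{\mr{SOT}}}(Y)$. (Alternatively one reproves it directly: if $x\in\env_S(Y)$ and $T\in\overline S^{\mr{WOT}}$ fixes $Y$, approximate $T$ $\mr{WOT}$ by a net $T_i\in S$; then $T_i\rest Y$ tends weakly to $\id_Y$, and here is where reflexivity enters — by the cited facts (M\'egrelishvili's theorem for a bounded group, or the LUR argument for contractions) weak convergence of $T_i y$ to $y$ upgrades to norm convergence for each $y\in Y$, so the hypothesis $x\in\env_S(Y)$ forces $T_ix\to x$ in norm, hence $Tx=x$.)

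For the reverse inclusion, fix $x\in\env^\mr{alg}_{\overline S^{\mr{WOT}}}(Y)$ and a net $(T_i)_{i\in I}$ in $S$ converging pointwise on $Y$ to $\id_Y$; I must show $T_ix\to x$ in norm. By reflexivity and boundedness of $S$, pass to a subnet along which $T_i\to T$ in $\mr{WOT}$ for some $T\in\overline S^{\mr{WOT}}$; then $Ty=\mr{w\text-}\lim T_iy=y$ for all $y\in Y$, so $T\in\mr{Stab}_{\overline S^{\mr{WOT}}}(Y)$ and therefore $Tx=x$, i.e. $T_ix\to x$ weakly along this subnet. The point is to boost this to a norm statement and to get rid of the subnet. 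For the norm boost, in the group case invoke M\'egrelishvili's theorem \cite[Theorem 2.5]{M}: weak and norm topologies agree on the orbit $Sx$ (or on $\overline{Sx}$), so weak convergence $T_ix\rightharpoonup x$ along the subnet is norm convergence. In the LUR-contraction case one instead notes $\|T_ix\|\le\|x\|$ and $\|x\|\le\liminf$ by weak lower semicontinuity, so (after normalizing) weak convergence to a sphere point in an LUR space is norm convergence; a small argument handles the case $x=0$ or $T_ix$ not landing exactly on the sphere (e.g. work with $x/\|x\|$ and the fact $\|T_ix\|\to\|x\|$, which follows because $\|T_ix\|\le\|x\|$ while $x$ is a weak limit). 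Finally, to remove the subnet: the above shows every subnet of $(T_ix)$ has a further subnet converging in norm to $x$, and that suffices for $T_ix\to x$ in norm.

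The main obstacle — really the only non-routine point — is the passage from weak to norm convergence on orbits, and correctly matching the two hypotheses to the two mechanisms: for a bounded \emph{group} of isomorphisms one has no contractivity and must use the topological-dynamics input (weak and norm topologies coincide on $G$-orbits in a space with the point of continuity property, hence in any reflexive space), whereas for a \emph{semigroup of contractions} one trades that for the geometric LUR hypothesis, using that contractions keep orbits in the ball and LUR turns weak convergence to a sphere point into norm convergence. Care is needed with the degenerate cases ($x=0$, or iterates whose norms are not exactly $1$) and with the fact that one only controls a subnet a priori; both are handled by the standard ``every subnet has a convergent further subnet'' device. The final sentence of the statement is then immediate: if $S=\overline S^{\mr{WOT}}$ then $\env^\mr{alg}_{\overline S^{\mr{WOT}}}(Y)=\env^\mr{alg}_S(Y)$ by definition.
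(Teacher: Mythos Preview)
Your overall approach matches the paper's: both inclusions are proved by exploiting the single key fact that, under either hypothesis, weak convergence of $(T_ix)_i$ to $x$ along nets in $S$ upgrades to norm convergence (via M\'egrelishvili for the group case, via LUR for the contraction case), combined with WOT-compactness of bounded sets of operators on a reflexive space. Your argument for the reverse inclusion is the paper's contrapositive argument rephrased directly, and the ``every subnet has a convergent further subnet'' device you invoke is exactly what is implicit in the paper's ``w.l.o.g.\ pass to a WOT-convergent subnet'' step.

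There is, however, a genuine slip in your treatment of the inclusion $\env_S(Y)\subseteq\env^{\mr{alg}}_{\overline S^{\mr{WOT}}}(Y)$. You call it ``essentially free'' from the chain of inequalities after Definition~\ref{eq}, but that chain only gives $\env_S(Y)\subseteq\env^{\mr{alg}}_{\overline S^{\mr{SOT}}}(Y)$. Since a larger semigroup yields a \emph{smaller} algebraic envelope, the containment $\overline S^{\mr{WOT}}\supseteq\overline S^{\mr{SOT}}$ yields $\env^{\mr{alg}}_{\overline S^{\mr{WOT}}}(Y)\subseteq\env^{\mr{alg}}_{\overline S^{\mr{SOT}}}(Y)$ --- which you even write --- and this points the wrong way to conclude anything. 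So the inclusion is \emph{not} free; it genuinely requires the weak-to-norm upgrade on $Y$, exactly as in your parenthetical alternative argument (which is correct and is precisely what the paper does). Just drop the ``essentially free'' sentence and promote the parenthetical argument to the main text.
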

\begin{proof} In the first case,
we may assume that $S$ is a group of isometries for some renorming of $X$.  Therefore in either case, for any net $(T_i)_i$ in $S$, 
and normalized vector $x \in X$, weak convergence of $(T_i x)_i$ to $x$ implies norm convergence.   Assume some  $x$ does not belong to $\env_S(Y)$ and let $(T_i)_i$ converge pointwise to $\id_{Y}$ on $Y$ such that $(T_i x)_i$ does not converge to $x$ in norm and therefore does not converge weakly to $x$.  Since reflexivity of $X$ is equivalent to saying that the space of contractions with the WOT is compact, there is convergent subnet for $(T_i)_i$. W.l.o.g., we may assume that $(T_i)_i$ converges WOT to some $T$, and  note that $T$ is the identity on $Y$, but that $Tx$ is not equal to $x$. This proves that $x$ does not belong to 
$\env^\mr{alg}_{\overline{S}^\mr{WOT}}(Y)$.

Likewise, let $x \in \env_S(Y)$ and $T\rest{Y}=\id_Y$ for $T \in \overline{S}^\mr{WOT}$, then let $(T_i)_i$ a net in $S$ converging WOT to $T$, so that $(T_i)_i$ converges WOT to $\id$ on $Y$. Then the convergence of $(T_i)_{|Y}$ to $\id_Y$ is SOT and so $(T_i x)_i$ converges to $x$, so $Tx=x$, and consequently $x \in \env^\mr{alg}_{\overline{S}^\mr{WOT}}(Y)$.
\end{proof}

The next proposition will allow us to compare the Korovkin and the minimal envelopes. 
We have the following decomposition for the Korovkin envelope that should be compared with Proposition  \ref{JdLG-alg}.

\begin{proposition}\label{JdLG-kor}  Let $X$ be  reflexive strictly convex, $S$ a semigroup of contractions on $X$, and  let $Y$ be a closed subspace of $X$.  Assume additionally that either (i) $S$ is a group of isometries on $X$, or  that (ii)  $X$ is LUR.    Then
\begin{enumerate}[\rm(1)]
 \item
$\env_S(Y)$ is a $1$-complemented subspace of $X$; in case (ii)  $\env_S(Y)$ is the smallest superspace of $Y$ complemented by a projection in
$\overline{{\rm conv}(S)}^\mr{WOT}$.
\item
If furthermore $X^*$ is strictly convex, then holds the 
${\rm Stab}_{\overline{\rm S}^\mr{WOT}}({Y})$-in\-va\-riant de\-com\-po\-sition
\begin{equation}\label{hu4h5ur34hiu5r34}
X=\env_S(Y) \oplus \env_S(JY)^{\perp},
\end{equation}
and in case (ii) the  projection on the first summand is the unique minimal projection of  the semigroup $\overline{{\rm conv}(S)}^{\rm{WOT}}$ acting as the identity on $Y$.
\end{enumerate}
\end{proposition}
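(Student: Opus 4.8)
The plan is to reduce the statement to Propositions~\ref{WOT} and \ref{JdLG-alg} by passing to the WOT-closure. Put $\tilde S:=\overline{S}^{\mr{WOT}}$. I would first record the routine facts that $\tilde S$ is again a semigroup of contractions (contractions because the norm is WOT-lower semicontinuous; closed under composition because, on the norm-bounded set $\mc L_1(X)$, left and right multiplication by a fixed operator are WOT-continuous, so the WOT-closure of the semigroup $\mr{conv}(S)$, and in particular of $S$, is a semigroup), that $\tilde S^{*}=\overline{S^{*}}^{\mr{WOT}}$ (the map $T\mapsto T^{*}$ is a WOT-homeomorphism of $\mc L_1(X)$ onto $\mc L_1(X^{*})$ because $X$ is reflexive), and that $\overline{\mr{conv}(\tilde S)}^{\mr{WOT}}=\overline{\mr{conv}(S)}^{\mr{WOT}}$ as well as $\overline{\mr{conv}(\stab_{\tilde S}(Y))}^{\mr{WOT}}\subseteq\overline{\mr{conv}(S)}^{\mr{WOT}}$ (since $\tilde S\subseteq\overline{\mr{conv}(S)}^{\mr{WOT}}$ and the latter is a WOT-closed convex semigroup). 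In either case~(i) or~(ii) Proposition~\ref{WOT} applies, since it only needs $X$ reflexive together with one of its two alternative hypotheses; hence $\env_S(Y)=\env^{\mr{alg}}_{\tilde S}(Y)$, and, in case~(i) where $S^{*}$ is an isometry group of $X^{*}$, also $\env_{S^{*}}(JY)=\env^{\mr{alg}}_{\tilde S^{*}}(JY)$.

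I would then apply Proposition~\ref{JdLG-alg} to the semigroup $\tilde S$ and the closed subspace $Y$. This gives at once the first assertion of~(1): $\env^{\mr{alg}}_{\tilde S}(Y)=\env_S(Y)$ is $1$-complemented by a contractive projection $p\in\overline{\mr{conv}(\stab_{\tilde S}(Y))}^{\mr{WOT}}\subseteq\overline{\mr{conv}(S)}^{\mr{WOT}}$. For the ``smallest superspace'' statement in case~(ii), where $X$ is LUR, I would invoke Remark~\ref{remrem}: any closed superspace $Z\supseteq Y$ complemented by a projection $q\in\overline{\mr{conv}(S)}^{\mr{WOT}}$ satisfies $q\rest Y=\id_Y$ (as $Y\subseteq Z=\mr{Fix}(q)$) and $q$ is contractive, so Remark~\ref{remrem} yields $\env_S(Y)\subseteq\ran(q)=Z$; since $\env_S(Y)$ is itself such a $Z$ via $p$, it is the smallest.

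For~(2), assume in addition that $X^{*}$ is strictly convex. Proposition~\ref{JdLG-alg} applied to $\tilde S$ produces the $\stab_{\tilde S}(Y)=\stab_{\overline{S}^{\mr{WOT}}}(Y)$-invariant decomposition $X=\env^{\mr{alg}}_{\tilde S}(Y)\oplus\env^{\mr{alg}}_{\tilde S^{*}}(JY)^{\perp}$, whose first summand is $\env_S(Y)$ and whose second summand, exactly as in the proof of Proposition~\ref{JdLG-alg}, equals $\mr{Fix}\bigl((\stab_{\tilde S}(Y))^{*}\bigr)=J\bigl(\env^{\mr{alg}}_{\tilde S}(Y)\bigr)=J(\env_S(Y))$; this is the subspace written $\env_S(JY)$ in the statement, which in case~(i) is $\env_{S^{*}}(JY)$ by the first paragraph. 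It remains to upgrade the minimality statement of Proposition~\ref{JdLG-alg} from $\overline{\mr{conv}(\stab_{\tilde S}(Y))}^{\mr{WOT}}$ to $\overline{\mr{conv}(S)}^{\mr{WOT}}$ in case~(ii). Here I would argue directly: $p$ is a contractive projection in $\overline{\mr{conv}(S)}^{\mr{WOT}}$ fixing $Y$; if $q\in\overline{\mr{conv}(S)}^{\mr{WOT}}$ is any projection fixing $Y$ pointwise, then Remark~\ref{remrem} gives $\ran(p)=\env_S(Y)\subseteq\ran(q)=\mr{Fix}(q)$, hence $qp=p$, so $pq$ is an idempotent of the semigroup $\overline{\mr{conv}(S)}^{\mr{WOT}}$ with $Y\subseteq\ran(pq)\subseteq\ran(p)$; a second use of Remark~\ref{remrem} forces $\ran(pq)=\env_S(Y)$, and the uniqueness of the contractive projection onto a $1$-complemented subspace of the strictly convex $X$ \cite{CoS} then gives $pq=p$. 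Thus $p\le q$ for every such $q$, which is precisely the claim that $p$ is the (unique minimal, indeed minimum) projection of $\overline{\mr{conv}(S)}^{\mr{WOT}}$ acting as the identity on $Y$.

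The step I expect to be most delicate is the handling of the duality $S\mapsto S^{*}$ in case~(ii): Proposition~\ref{WOT} is not directly available for $S^{*}$ on $X^{*}$ unless $X^{*}$ is also LUR, so there the natural reading of the symbol $\env_S(JY)$ is the algebraic envelope $\env^{\mr{alg}}_{\overline{S^{*}}^{\mr{WOT}}}(JY)=J(\env_S(Y))$, and the formulation of~(2) should be phrased with this in mind. The remaining points --- that $\tilde S$, $\tilde S^{*}$ and $\overline{\mr{conv}(S)}^{\mr{WOT}}$ are semigroups of the required kind, and that the various closures of convex hulls and of stabilizers behave as asserted --- are routine but need to be checked so that the hypotheses of Propositions~\ref{WOT} and \ref{JdLG-alg} are genuinely met.
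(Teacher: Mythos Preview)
Your proposal is correct and follows the same overall strategy as the paper: identify $\env_S(Y)$ with $\env^{\mr{alg}}_{\overline{S}^{\mr{WOT}}}(Y)$ via Proposition~\ref{WOT}, then apply Proposition~\ref{JdLG-alg} to the WOT-closed semigroup, and use Remark~\ref{remrem} for the ``smallest superspace'' assertion.

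There is one minor but noteworthy difference. To place the projection $p$ inside $\overline{\mr{conv}(S)}^{\mr{WOT}}$, the paper invokes Proposition~\ref{convG} to identify $\overline{\mr{conv}(\stab_{\overline{S}^{\mr{WOT}}}(Y))}^{\mr{WOT}}$ with $\stab_{\overline{\mr{conv}(S)}^{\mr{WOT}}}(Y)$. Your route is more direct: from $\stab_{\tilde S}(Y)\subseteq\tilde S\subseteq\overline{\mr{conv}(S)}^{\mr{WOT}}$ and the fact that the latter is convex and WOT-closed, you get the inclusion immediately, without needing LUR at this particular step (LUR is still used, of course, for Proposition~\ref{WOT} and Remark~\ref{remrem}). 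This is a genuine simplification. Likewise, for the minimality claim in (2)(ii) the paper simply says it ``follows from (1)(ii) and strict convexity of the dual''; your explicit argument via $qp=p$, $pq$ idempotent, and the Cohen--Sullivan uniqueness result is exactly what is behind that sentence. One small correction: the uniqueness of the contractive projection is a consequence of strict convexity of $X^{*}$ (which you have in (2)), not of $X$ as you wrote. Finally, your caveat about the symbol $\env_S(JY)$ in case~(ii) is well taken; the paper is indeed using it as shorthand for $J(\env_S(Y))=\env^{\mr{alg}}_{\overline{S^{*}}^{\mr{WOT}}}(JY)$, and your reading is the correct one.
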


\begin{proof} The $1$-complementation in (1) and the decomposition in \eqref{hu4h5ur34hiu5r34}  is a consequence of Proposition  \ref{JdLG-alg} and  the relation between the algebraic and the Korovkin envelope in   Proposition \ref{WOT}. Regarding the projection in case (1) {\it(ii)}, note that as a first step the set it belongs to is $\overline{{\rm conv}({\rm Stab}_{\overline{S}^\mr{WOT}}(Y))}^\mr{WOT}$. Under the LUR property and by
 Proposition \ref{convG}, this set is the same as
 $\overline{{\rm conv}({\rm Stab}_{\overline{{\rm conv}(S)}^\mr{WOT}}(Y))}^\mr{WOT}$.
 But it is also the same as ${\rm Stab}_{\overline{{\rm conv}(S)}^\mr{WOT}}(Y)$  since this set is convex and WOT-closed. This proves that  $\env_S(Y)$ is  complemented by a projection in
$\overline{{\rm conv}(S)}^\mr{WOT}$, and that it is the smallest among those containing $Y$ is Remark \ref{remrem}. The statement about the projection in (2){\it(ii)} follows from the one in case (1){\it(ii)} and strict convexity of the dual.
 \end{proof}

As a consequence we spell out the following (note that (3) is an improvement of Calvert's result \cite{Calvert} on the Korovkin envelope, since there it is needed $X^*$ to be LUR and we impose no requirement on it).

\begin{proposition}\label{ghfddddd}  Let $X$ be reflexive strictly convex and let $Y$ be a closed subspace of $X$. Then
\begin{enumerate}[{\rm(1)}]
    \item the isometric envelope $\env(Y)$ is $1$-complemented.
\end{enumerate}
If furthermore $X$ is LUR then
\begin{enumerate}[{\rm(1)}]\addtocounter{enumi}{1}
\item  $\env(Y)$ is the smallest superspace of $Y$ complemented by a projection in  $\overline{{\rm conv}(\isome(X))}^{\rm{WOT}}$; if $X^*$ is strictly convex, then this projection is the minimal projection in 
$\overline{{\rm conv}(\isome(X))}^{\rm{WOT}}$
acting as the identity on $Y$.
\item The Korovkin envelope $\env_{\mc L_1(X)}(Y)$ coincides with the minimal envelope $\env_\mr{min}(Y)$.
\end{enumerate}
\end{proposition}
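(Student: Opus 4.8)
The plan is to obtain all three assertions as immediate specializations of Proposition~\ref{JdLG-kor}, choosing the semigroup $S$ suitably; essentially no work beyond bookkeeping remains, since the substantive content (the mean ergodic/JdLG machinery, the strict convexity arguments, and the comparison of the Korovkin and algebraic envelopes) is already packaged there. For (1) I take $S=\isome(X)$, which is a group of isometries, so hypothesis~(i) of Proposition~\ref{JdLG-kor} holds with no further assumption; part~(1) of that proposition then gives directly that $\env(Y)=\env_{\isome(X)}(Y)$ is $1$-complemented, which is exactly assertion~(1).

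For (2) I additionally assume $X$ is LUR, so that hypothesis~(ii) of Proposition~\ref{JdLG-kor} also holds for $S=\isome(X)$; case~(ii) of part~(1) there yields that $\env(Y)$ is the smallest superspace of $Y$ complemented by a projection in $\overline{\mathrm{conv}(\isome(X))}^{\mathrm{WOT}}$. If moreover $X^*$ is strictly convex, part~(2) of Proposition~\ref{JdLG-kor} (case~(ii)) identifies the projection onto $\env(Y)$ along $\env(JY)^{\perp}$ as the unique minimal projection of $\overline{\mathrm{conv}(\isome(X))}^{\mathrm{WOT}}$ acting as the identity on $Y$; this is assertion~(2).

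For (3) I take $S=\mc L_1(X)$, the semigroup of all contractions on $X$, and again use that $X$ is LUR so that hypothesis~(ii) applies. The one small point to check is that $\mc L_1(X)$ is already convex and WOT-closed, hence equal to $\overline{\mathrm{conv}(\mc L_1(X))}^{\mathrm{WOT}}$: convexity is clear, and WOT-closedness follows since a WOT-limit $T$ of operators $T_i$ with $\|T_i\|\le 1$ satisfies $|\langle\phi,Tx\rangle|=\lim_i|\langle\phi,T_i x\rangle|\le\|\phi\|\,\|x\|$ for all $x\in X$ and $\phi\in X^*$, whence $\|T\|\le 1$. Thus case~(ii) of part~(1) of Proposition~\ref{JdLG-kor} says that $\env_{\mc L_1(X)}(Y)$ is the smallest superspace of $Y$ complemented by a projection lying in $\mc L_1(X)$, that is, by a contractive projection; by definition this is precisely the smallest $1$-complemented superspace of $Y$, namely $\env_\mr{min}(Y)$ (well defined by Proposition~\ref{inter}). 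Hence $\env_{\mc L_1(X)}(Y)=\env_\mr{min}(Y)$. I do not expect any genuine obstacle here: the argument is a short sequence of applications of Proposition~\ref{JdLG-kor} with the right choice of $S$, the only non-automatic ingredient being the elementary identification of $\overline{\mathrm{conv}(\mc L_1(X))}^{\mathrm{WOT}}$ with $\mc L_1(X)$ used in~(3).
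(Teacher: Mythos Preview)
Your proposal is correct and follows exactly the approach of the paper, which simply states that all items follow from Proposition~\ref{JdLG-kor}. You have merely spelled out the choices of $S$ and the one elementary observation (that $\mc L_1(X)$ is already convex and WOT-closed) needed to read off item~(3).
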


\begin{proof}
 All items follow from Proposition \ref{JdLG-kor}. 
\end{proof}


\begin{remark}\label{remarkiii} Note that under the assumption that $X$ is  reflexive strictly convex,  we have 
$$\env_\mr{min}(Y) \subseteq \env(Y) \subseteq \env^\mr{alg}(Y)$$.

We do not know of a general characterization of situations where $\env(Y)$ and
$\env^\mr{alg}(Y)$ must coincide.  However when $X=L_p$, $p\neq 2$, and $Y$ is a unital subspace, then we will see in Theorem \ref{62} that $\env_\mr{min}(Y)= \env(Y)=L_p([0,1],\Sig_Y)$, where $\Sig_Y$ is the minimal subalgebra of the Borel algebra $\mc B([0,1])$ making all functions in $Y$ measurable. And in this case $\env(Y)=\env^\mr{alg}(Y)$ exactly when $\Sig_Y$ is a {\em fixed-point subalgebra} (see Remark \ref{lknjkio855}).    
\end{remark}

\begin{remark}
In \cite{BeMa} Beauzamy and Maurey consider a non-linear procedure associating to a subset $M$ the set $M\con\min M$ of its minimal points in a metric sense. When $X$ is reflexive strictly convex and with strictly convex dual, it follows from their results that the minimal envelope of a subspace may be obtained through iteration of their "min" procedure.  In this way they recover the results of Bernau \cite{Bernau} and Calvert \cite{Calvert} mentioned earlier.
\end{remark}

\begin{remark} 
It may also be useful to spell out, for $X$ reflexive LUR:  $Y=\env(Y)$ if and only if
$Y$ is complemented by a contractive projection in $\overline{{\rm conv}(\isome(X))}^{\rm WOT}$. This characterizes the subspaces which are isometric envelopes as those which are ranges of projections in the ``isometric hull" $\overline{{\rm conv}(\isome(X))}^{\rm WOT}$.
This fully describes the isometric envelope map, as observed at the beginning of this section just after the definition of abstract envelope maps.
\end{remark}

\begin{proposition}\label{uhiu4iur4rt4}  Assume $X$ is separable reflexive, let $G$ be a subgroup of $\isome(X)$, and let $Y$ be a subspace of $X$. Then $\env_G(Y)$ is $1$-complemented in $X$ by a projection in $\overline{{\rm conv}(G)}^{\rm WOT}$.
\end{proposition}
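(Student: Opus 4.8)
The plan is to realize $\env_G(Y)$ as the fixed subspace of a bounded semigroup of contractions and then to feed that semigroup into the mean ergodic theorem, just as in the proof of Proposition~\ref{JdLG}. The point to keep in mind is that, unlike in Propositions~\ref{JdLG}--\ref{ghfddddd}, strict convexity is \emph{not} assumed here, so the duality-mapping description of $1$-complemented subspaces is unavailable; the projection must be produced directly from the mean ergodic theorem, and essentially the only step with content is to locate that projection inside $\overline{\conv(G)}^{\mr{WOT}}$.

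First I would reduce to a semigroup statement. Since $G$ is a bounded group of isometries on the reflexive space $X$, Proposition~\ref{WOT} applies and gives $\env_G(Y)=\env^{\mr{alg}}_{\overline{G}^{\mr{WOT}}}(Y)=\mr{Fix}(H)$, where $H:=\stab_{\overline{G}^{\mr{WOT}}}(Y)$. Here $\overline{G}^{\mr{WOT}}$ is a WOT-closed sub-semigroup of $\mc L_1(X)$ (a WOT-limit of isometries is a contraction, and composition is separately WOT-continuous), so $H$ is a WOT-closed sub-semigroup of $\mc L_1(X)$ and $\id\in H$. Moreover $H\subseteq\overline{G}^{\mr{WOT}}\subseteq\overline{\conv(G)}^{\mr{WOT}}$, and the latter set is convex and WOT-closed, so $\overline{\conv(H)}^{\mr{WOT}}\subseteq\overline{\conv(G)}^{\mr{WOT}}$. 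Thus it suffices to exhibit a contractive projection onto $\mr{Fix}(H)$ lying in $\overline{\conv(H)}^{\mr{WOT}}$.

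Next I would apply the mean ergodic theorem to $H$. Being a bounded semigroup of contractions on a reflexive space, $\overline{H}^{\mr{WOT}}=H$ is mean ergodic, and, exactly as recalled in the proof of Proposition~\ref{JdLG} (via \cite[Theorems~8.33 and~8.34]{EFHN}), there is a mean ergodic projection $p$ onto $\mr{Fix}(H)$, characterized by $px\in\overline{\conv(Hx)}$ for every $x\in X$ together with $pT=Tp=p$ for all $T\in H$. Since the elements of $H$ are contractions, every vector of $\conv(Hx)$, hence of its closure, has norm at most $\nrm x$; therefore $\nrm p\le 1$, so $p$ is a contractive projection, and from $Tp=p$ for all $T\in H$ its range is exactly $\mr{Fix}(H)=\env_G(Y)$.

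It remains to check $p\in\overline{\conv(H)}^{\mr{WOT}}$, which I regard as the main obstacle (the rest being a bookkeeping of what Proposition~\ref{WOT} and the cited mean ergodic theorem already supply). I would handle it by the usual diagonal device: given $x_1,\dots,x_n\in X$, apply the previous paragraph to the $\ell_2$-sum $Z$ of $n$ copies of $X$ and to the diagonal semigroup $D=\{(T,\dots,T):T\in H\}$ of contractions on $Z$; by uniqueness of the mean ergodic projection, the one attached to $D$ is $(p,\dots,p)$, whence $(px_1,\dots,px_n)\in\overline{\conv\{(Tx_1,\dots,Tx_n):T\in H\}}$ in $Z$. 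Consequently, for any prescribed finitely many functionals and any $\vep>0$ there is $T\in\conv(H)$ with $Tx_i$ weakly $\vep$-close to $px_i$ simultaneously for $i=1,\dots,n$; as $x_1,\dots,x_n$ were arbitrary, $p$ lies in the WOT-closure of $\conv(H)$, hence in $\overline{\conv(G)}^{\mr{WOT}}$ by the first step. (Separability of $X$ plays no role beyond the ambient reflexivity in this argument; it does, however, make the WOT metrizable on bounded subsets of $\mc L(X)$, so that the net arguments above — notably in the invocation of Proposition~\ref{WOT} — can be carried out with sequences if preferred.)
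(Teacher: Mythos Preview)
Your second step contains a genuine gap. You assert that ``being a bounded semigroup of contractions on a reflexive space, $H$ is mean ergodic'' and hence carries a mean ergodic projection onto $\mr{Fix}(H)$. This is false in general, and the Johnson pushout example mentioned just before Proposition~\ref{inter} already shows why: in a suitable $4$-dimensional (hence reflexive) space there are contractive projections $P_1,P_2$ whose ranges intersect in a subspace that is \emph{not} $1$-complemented. The semigroup generated by $\{I,P_1,P_2\}$ is a bounded semigroup of contractions containing the identity, yet $\mr{Fix}(S)=Y_1\cap Y_2$ admits no contractive projection at all, so no zero element can exist in $\overline{\conv(S)}^{\rm WOT}$. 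The paper's own invocation of \cite[Theorems~8.33, 8.34]{EFHN} in Proposition~\ref{JdLG} is made under the standing hypothesis that $X$ is strictly convex (and, for the characterisation you quote, that $X^*$ is strictly convex as well); that hypothesis is precisely what makes $\mr{Fix}(S)$ $1$-complemented, via the argument in the proof of Proposition~\ref{inter}. You have dropped it, and with it the conclusion. Note also that nothing in your reduction forces $H=\stab_{\overline{G}^{\rm WOT}}(Y)$ to be abelian, amenable, or the WOT-closure of a \emph{group}, so there is no obvious special structure of $H$ to fall back on.

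The paper's proof proceeds along a different line and this is where separability is actually used. By Lancien's theorem one renorms $X$, for each $n$, by a $(1+1/n)$-equivalent LUR norm under which $G$ is still a group of isometries; Proposition~\ref{JdLG-kor}(1)(ii) (which does require LUR, hence strict convexity) then yields a projection onto $\env_G(Y)$ that is contractive for the new norm and thus $(1+1/n)^2$-bounded for the original one, lying in $\overline{\conv(G)}^{\rm WOT}$. A WOT-cluster point of these projections gives the desired norm-$1$ projection. So your closing remark that separability ``plays no role'' is misleading: in the paper's argument it is exactly what makes Lancien's renorming available and thereby supplies the strict convexity your direct approach is missing.
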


\begin{proof}  By Lancien \cite{Lancien} we may renorm $X$ with a norm $\nrm{\cdot}$ which is $1+1/n$-equivalent to the original norm and so that $X$ is LUR and $G \subseteq \isome(X,\nrm{\cdot})$. It follows from Proposition \ref{JdLG-kor} (1)(ii) that $\env_G(Y)$ is $1$-complemented with respect to $\nrm{\cdot}$ and therefore $(1+1/n)^2$-complemented in the original norm, by a projection in $\overline{{\rm conv}(G)}^{\rm WOT}$. A WOT-cluster point of the sequence of associated projections $P_n$ is a norm $1$-projection onto $\env_G(Y)$ belonging to $\overline{{\rm conv}(G)}^{\rm WOT}$.
\end{proof}

\begin{corollary} The Hilbert space is the only separable reflexive space for which each closed subspace $Y$ ($2$-dimensional subspace is enough) is equal to its envelope $\env(Y)$.
\end{corollary}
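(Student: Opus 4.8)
The plan is to exploit the characterization of envelopes obtained just above: for a separable reflexive space $X$, Proposition~\ref{uhiu4iur4rt4} (applied with $G = \isome(X)$) shows that $\env(Y)$ is always $1$-complemented in $X$ by a projection lying in $\overline{\mr{conv}(\isome(X))}^{\rm WOT}$. Hence, if every closed subspace $Y$ satisfies $Y = \env(Y)$, then in particular \emph{every} closed subspace of $X$ is $1$-complemented. So the corollary reduces to the classical fact that a Banach space in which every closed subspace is $1$-complemented must be a Hilbert space. For separable $X$ this is a theorem of Kakutani (building on Lindenstrauss--Tzafriri's solution of the complemented subspace problem in the isomorphic category, but here we only need the old isometric statement): if every $2$-dimensional subspace of $X$ is the range of a norm-$1$ projection, then the norm satisfies the parallelogram law, so $X$ is Hilbertian. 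This is why ``$2$-dimensional subspace is enough'': we only use $Y=\env(Y)$ for $2$-dimensional $Y$, which forces every $2$-dimensional subspace to be $1$-complemented.

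Concretely, I would proceed as follows. First I would record that we may assume $X$ is at least $2$-dimensional (otherwise it is trivially Hilbertian), and that a Hilbert space does satisfy the hypothesis by Example~\ref{hilbert}, so only the converse needs proof. Next, fix an arbitrary $2$-dimensional subspace $Y \subseteq X$; by hypothesis $\env(Y) = Y$, and by Proposition~\ref{uhiu4iur4rt4} there is a contractive (norm-$1$) projection $P\colon X \to Y$. Then I would invoke the isometric characterization of inner product spaces: a normed space all of whose (closed) subspaces — equivalently, all of whose two-dimensional subspaces — are the range of a projection of norm $1$ is isometric to a Hilbert space. One clean route is via Kakutani's theorem that a Banach space of dimension $\geq 3$ in which every $2$-dimensional subspace is $1$-complemented is Hilbertian; combined with the fact that if $X$ has dimension $2$ the hypothesis is vacuous on proper subspaces, one still gets the parallelogram law by a direct argument on $X$ itself once one knows $X$ embeds isometrically into a larger space satisfying the stronger hypothesis — but since here $X$ is reflexive and the hypothesis concerns \emph{all} closed subspaces, I would simply quote Kakutani directly in dimension $\geq 3$, and treat the (degenerate) two-dimensional case separately by noting that a $2$-dimensional reflexive AUH-type situation does not arise, or more simply by observing the statement is about separable spaces of arbitrary dimension and one reduces to $\dim \geq 3$ by passing to $Y \oplus Z$ for suitable $Z$.

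The main obstacle — really the only nontrivial input — is the isometric rigidity statement that ``every $2$-dimensional subspace $1$-complemented $\Rightarrow$ Hilbert''. This is classical (Kakutani, see also the survey literature on characterizations of Hilbert spaces, e.g.\ Amir's book); the contribution of the present paper is not this step but the reduction to it via the envelope machinery, i.e.\ Proposition~\ref{uhiu4iur4rt4}, which converts the hypothesis ``$Y = \env(Y)$ for all $Y$'' into ``$Y$ is $1$-complemented for all $Y$''. So the write-up should be short: quote Proposition~\ref{uhiu4iur4rt4} to get $1$-complementation of every closed (resp.\ every $2$-dimensional) subspace, then quote Kakutani's characterization of Hilbert space, and finally note that the reverse implication is Example~\ref{hilbert}.
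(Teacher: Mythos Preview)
Your proposal is correct and follows essentially the same route as the paper: use Proposition~\ref{uhiu4iur4rt4} to see that every envelope is $1$-complemented, deduce from the hypothesis that every ($2$-dimensional) subspace is $1$-complemented, and conclude by Kakutani's characterization of Hilbert spaces; the converse is Example~\ref{hilbert}. The paper's proof is just this, stated in two lines, and additionally cites Bohnenblust for the complex case; your digression about the case $\dim X = 2$ is unnecessary, since Kakutani's theorem is stated for $\dim X \geq 3$ and the corollary should simply be read with that implicit restriction (or for infinite-dimensional $X$, as the phrase ``the Hilbert space'' suggests).
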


\begin{proof} Indeed it is a known result by Kakutani \cite{K} that X is Hilbertian if and only if all subspaces are $1$-complemented (in fact it is enough to require that all $2$-dimensional subspaces are 1-complemented); in the complex case this is due to Bohnenblust \cite{Bohnenblust}. \end{proof}

%
%
%
%
%
%
%
%

\subsection{Strong isometric embeddings}
Closely related to the isometric envelope, as well as a motivation for its definition, are the embeddings who are limits or restriction of (global) isometries. 
\begin{definition} Given a  Banach space $X$, and a subspace $Y\con X$, one defines the collection
${\rm Emb}_\mr{ext}(Y,X)$ of {\em extendable} isometric embeddings
as
$${\rm Emb}_\mr{ext}(Y,X):={\rm Isom}(X)\rest{Y} \subseteq {\rm Emb}(Y,X),$$ and
the collection ${\rm Emb}_\mr{s}(Y,X)$  of {\em strong} isometric embeddings as
$${\rm Emb}_\mr{s}(Y,X):=\overline{{\rm Isom}(X)\rest{Y}}^\mr{SOT} \subseteq {\rm Emb}(Y,X).$$
\end{definition}

Note that ${\rm Emb}_\mr{s}(Y,X)$ is by definition SOT-closed in ${\rm Emb}(Y,X)$, hence a motivation for the name.

When $X$ is a Hilbert space, then ${\rm Emb}_\mr{ext}(F,X)={\rm Emb}(F,X)$ for all finite dimensional subspaces $F$ of $X$. Spaces with these properties are called {\em ultrahomogeneous}.  The Hilbert space is the only known separable example; non separable examples appear in \cite{ACCGM} and \cite{FLMT}.  When ${\rm Emb}_\mr{s}(F,X)= {\rm Emb}(F,X)$ for every finite dimensional $F$, the space is called approximately ultrahomogeneous (see \cite{FLMT}). Separable examples are the Hilbert spaces, all $L_p[0,1]$ for $p\notin 4 + 2\N$, or the Gurarij space.


We shall sometimes use the word
 {\em partial isometry}  on a space $X$ to mean a surjective isometry $t$ between subspaces $Y$ and $Z$ of $X$.  A {\em strong partial isometry} is a partial isometry $t: Y \rightarrow Z$ such that $i_{ZX} \circ t$ belongs to ${\rm Emb}_{s}(Y,X)$, where $i_{ZX}$ is the inclusion map $Z \rightarrow X$.

\begin{lemma} The inverse of a strong partial isometry is a strong partial isometry. When defined, the composition of two strong partial isometries is a strong partial isometry. \end{lemma}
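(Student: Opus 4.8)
The plan is to unwind the definitions and chase the required data through SOT-limits, being careful about the fact that a strong partial isometry is, strictly speaking, a pair: an isometry $t\colon Y\to Z$ onto its image together with the information that the inclusion-composition $i_{ZX}\circ t$ lies in $\mr{Emb}_\mr{s}(Y,X)=\overline{\isome(X)\rest Y}^\mr{SOT}$.

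\medskip

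For the first statement, let $t\colon Y\to Z$ be a strong partial isometry, so there is a net $(T_\al)$ in $\isome(X)$ with $T_\al y\to ty$ in norm for every $y\in Y$. I want to show $t^{-1}\colon Z\to Y$ is a strong partial isometry, i.e. that $(i_{YX}\circ t^{-1})$ is an SOT-limit of a net in $\isome(X)\rest Z$. The natural candidate is the net $(T_\al^{-1})$. Fix $z\in Z$ and write $z=ty$ with $y\in Y$. Then
$$\nrm{T_\al^{-1}z - y} = \nrm{T_\al^{-1}z - T_\al^{-1}(T_\al y)} = \nrm{z - T_\al y} = \nrm{ty - T_\al y}\longrightarrow 0,$$
using that $T_\al^{-1}$ is an isometry and $T_\al y\to ty=z$. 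Hence $T_\al^{-1}z\to y=t^{-1}z$ for every $z\in Z$, which is exactly the assertion that $i_{YX}\circ t^{-1}$ is the SOT-limit of $(T_\al^{-1}\rest Z)$, a net in $\isome(X)\rest Z$. So $t^{-1}$ is a strong partial isometry. (As a bonus this also reproves that $t^{-1}$ is a surjective isometry between subspaces, but that part is trivial anyway.)

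\medskip

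For the composition, suppose $t\colon Y\to Z$ and $s\colon Z\to W$ are strong partial isometries, so there are nets $(T_\al)$, $(S_\be)$ in $\isome(X)$ with $T_\al y\to ty$ for $y\in Y$ and $S_\be z\to sz$ for $z\in Z$. I want a net in $\isome(X)$ converging pointwise on $Y$ to $s\circ t$. The obstacle here is the usual one when composing nets of approximating maps: $S_\be(T_\al y)$ need not be close to $s(ty)$ unless one controls the two approximations simultaneously, because $S_\be$ is only known to be close to $s$ on the fixed points $z\in Z$, while $T_\al y$ merely \emph{converges} to $ty\in Z$ — so one must exploit that the $S_\be$ are \emph{uniformly} (indeed isometrically) equicontinuous. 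The clean fix is a diagonal argument. Index the product net by $(\be,\al)\in B\times A$ ordered lexicographically (or: for each $\be$ pick $\al(\be)$ large enough); given $\vep>0$ and finitely many $y_1,\dots,y_n\in Y$, first choose $\al$ with $\nrm{T_\al y_i - ty_i}<\vep$ for all $i$, then choose $\be$ with $\nrm{S_\be(ty_i) - s(ty_i)}<\vep$ for all $i$; since $S_\be$ is an isometry,
$$\nrm{S_\be T_\al y_i - s(t y_i)} \le \nrm{S_\be T_\al y_i - S_\be(ty_i)} + \nrm{S_\be(ty_i) - s(ty_i)} = \nrm{T_\al y_i - ty_i} + \nrm{S_\be(ty_i) - s(ty_i)} < 2\vep.$$
Thus the net $(S_\be\circ T_\al)$ in $\isome(X)$ converges pointwise on $Y$ to $s\circ t$, which shows $i_{WX}\circ(s\circ t)\in\overline{\isome(X)\rest Y}^\mr{SOT}=\mr{Emb}_\mr{s}(Y,X)$; and $s\circ t\colon Y\to W$ is visibly a surjective isometry, being a composition of such. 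Hence $s\circ t$ is a strong partial isometry.

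\medskip

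The only genuinely delicate point, as indicated, is the composition: one must formulate the diagonal/product-net argument precisely enough that the quantifiers come out right, and one should note explicitly where isometry (or at least uniform equicontinuity) of the $S_\be$ is used — without it the statement would fail. Everything else is bookkeeping with the definition of $\mr{Emb}_\mr{s}(Y,X)$ and the elementary fact that isometries preserve norms of differences.
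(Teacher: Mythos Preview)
Your proof is correct and follows essentially the same approach as the paper: for the inverse you use exactly the same isometry computation $\nrm{T_\al^{-1}z-t^{-1}z}=\nrm{z-T_\al y}=\nrm{ty-T_\al y}$, and for the composition the paper merely says ``similarly'' while you spell out the diagonal argument explicitly. One small quibble: the phrase ``ordered lexicographically'' for the product net is not quite the right indexing (the product order is what makes $B\times A$ directed), but this is harmless since your actual argument is a closure argument---you show every SOT-neighborhood of $s\circ t$ on $Y$ contains some $S_\be T_\al$---and that is exactly what is needed.
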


\begin{proof}   Let $\gamma: Y \rightarrow Z$ be a strong partial isometry. Given $z_1,\dots, z_n\in Z$ of norm 1, let $y_j:=\ga^{-1} z_j$ for each $1\le j\le n$. Let $g\in \mr{Isom}(X)$ be such
that $\nrm{g(y_j)-\ga(y_j)}\le \vep$ for every $1\le j\le n$. Then for each $1\le j\le n$ one has that $\nrm{\ga^{-1}(z_j)-g^{-1}(z_j)}=\nrm{y_j-g^{-1}(\ga(y_j))}=\nrm{g^{-1}gy_j-g^{-1}\ga(y_j)}=\nrm{gy_j -\ga y_j}\le \vep$. Similarly one proves the second part of the statement.
\end{proof}

We may therefore define an equivalence relation between subspaces of $X$ as follows: $Y$ and $Z$ are {\em strongly isometric} if there exists an strong partial isometry between $Y$ and $Z$. The next proposition substantiates the idea that isometric envelopes are well preserved under strong partial isometries.

\begin{proposition}\label{extension} 
Let $X$ be a Banach space and $Y$  a closed subspace of $X$. Then
\begin{enumerate}[\rm (1)]
\item Every $t \in {\rm Emb}_{s}(Y,X)$ extends uniquely to some $\tilde{t} \in {\rm Emb}_{s}(\env(Y),X)$.
\item The image $\tilde{t}(\env(Y))$ is equal to $\env(tY)$.  
\end{enumerate}
In particular if $Y$ and $Z$ are  strongly isometric subspaces of $X$, then so are $\env(Y)$ and $\env(Z)$. Furthermore,
\begin{enumerate}[\rm (3)]
\item If $Y$ is separable, the map $t \mapsto \tilde{t}$ is SOT-SOT continuous from ${\rm Emb}_{s}(Y,X)$ to ${\rm Emb}_{s}(\env(Y),X)$.
\end{enumerate}
\end{proposition}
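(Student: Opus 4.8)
The plan is to prove the three parts in order, exploiting the net-characterization of the envelope (Proposition \ref{pfpfpfprrr}) together with the defining property of $\env(Y)$ as a Korovkin envelope.

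\textbf{Part (1): existence and uniqueness of the extension.}
First I would fix $t \in {\rm Emb}_{s}(Y,X)$ and choose a net $(g_i)_{i\in I}$ in ${\rm Isom}(X)$ with $g_i \to t$ in the SOT on $Y$. For a fixed $x \in \env(Y)$, I claim $(g_i(x))_i$ is norm-convergent: indeed, if it were not, by boundedness of $\isome(X)$ and the argument in the proof of Proposition \ref{pfpfpfprrr} (forming the net $U_i = g_{j_i}^{-1}g_{k_i}$ from two non-Cauchy cofinal branches) we would produce a net in $\isome(X)$ converging pointwise to $\id_Y$ on $Y$ but not sending $x$ to $x$, contradicting $x \in \env(Y)$. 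So define $\tilde t(x) := \lim_i g_i(x)$. I would then check that $\tilde t(x)$ does not depend on the choice of net: if $(g'_j)_j$ is another such net, interleave the two nets (as in the proof of Proposition \ref{pfpfpfprrr}) into a single net converging pointwise to $t$ on $Y$; convergence of the interleaved net forces the two limits to agree. Linearity and isometry of $\tilde t$ on $\env(Y)$ pass to the limit from the $g_i$; and $\tilde t \rest Y = t$ since $g_i \to t$ on $Y$. Finally $\tilde t \in {\rm Emb}_{s}(\env(Y),X)$ because $\tilde t = \lim_i (g_i \rest{\env(Y)})$ in the SOT, each $g_i \rest{\env(Y)}$ being the restriction of a global isometry. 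Uniqueness of the extension within ${\rm Emb}_{s}(\env(Y),X)$: any two extensions agree on $Y$, hence by the same interleaving/net argument applied to the envelope of $Y$ — using $\env(\env(Y)) = \env(Y)$ from property (d) of envelopes — they agree on $\env(Y)$; alternatively, any $s \in {\rm Emb}_{s}(\env(Y),X)$ with $s\rest Y = t$ is an SOT-limit of global isometries agreeing approximately with $t$ on a finite subset of $Y$, and such a net computes $\tilde t$ on each point of $\env(Y)$.

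\textbf{Part (2): $\tilde t(\env(Y)) = \env(tY)$.}
I would prove $\subseteq$ and $\supseteq$ separately. For $\subseteq$: take $x \in \env(Y)$ and a net $(h_k)_k$ in $\isome(X)$ converging pointwise on $tY$ to $\id_{tY}$; then $(h_k \circ g_i)$, with $g_i \to t$ on $Y$ chosen as above, gives (after a suitable diagonal/iterated-limit argument, or by first passing to $h_k \circ \tilde t$ as an element of ${\rm Emb}_s(\env(Y),X)$) a net that converges pointwise to $t$ on $Y$; applying $x \in \env(Y)$ yields that $h_k(\tilde t(x)) = \lim_i h_k(g_i(x)) \to \tilde t(x)$, so $\tilde t(x) \in \env(tY)$. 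Cleanly: $t^{-1} \in {\rm Emb}_s(tY,X)$ by the Lemma preceding the proposition, so applying Part (1) to $t^{-1}$ produces $\widetilde{t^{-1}} \in {\rm Emb}_s(\env(tY),X)$; and $\widetilde{t^{-1}} \circ \tilde t$ and $\tilde t \circ \widetilde{t^{-1}}$ are strong partial isometries (composition Lemma) extending $\id_Y$ and $\id_{tY}$ respectively, hence by the uniqueness in Part (1) they equal the identity on $\env(Y)$ and $\env(tY)$. This gives that $\tilde t : \env(Y) \to \env(tY)$ is a surjective isometry, settling (2) and the ``in particular'' clause.

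\textbf{Part (3): SOT--SOT continuity when $Y$ is separable.}
Suppose $t_n \to t$ in the SOT on $Y$, with $t_n, t \in {\rm Emb}_s(Y,X)$; I must show $\tilde t_n(x) \to \tilde t(x)$ for each $x \in \env(Y)$. Fix $x$ and $\varepsilon > 0$; by $x \in \env(Y)$ there are $y_1,\dots,y_m \in Y$ and $\delta>0$ such that any $g \in \isome(X)$ with $\max_j \|g(y_j) - y_j\| \le \delta$ satisfies $\|g(x) - x\| \le \varepsilon$, and translating by isometries, any $g,h \in \isome(X)$ with $\max_j\|g(y_j)-h(y_j)\|\le\delta$ satisfy $\|g(x)-h(x)\|\le C\varepsilon$ for a constant $C$ depending on $\sup\|\isome(X)\| = 1$. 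Now for $n$ large, $\max_j \|t_n(y_j) - t(y_j)\| \le \delta/3$; approximating $\tilde t_n$ and $\tilde t$ on $\{x, y_1,\dots,y_m\}$ by global isometries $g_n$ (with $\|g_n(y_j)-t_n(y_j)\|$ and $\|g_n(x) - \tilde t_n(x)\|$ small) and $g$ respectively, the triangle inequality gives $\max_j\|g_n(y_j) - g(y_j)\| \le \delta$, hence $\|g_n(x) - g(x)\| \le C\varepsilon$, hence $\|\tilde t_n(x) - \tilde t(x)\| \lesssim \varepsilon$. Separability of $Y$ is used to ensure the relevant nets can be taken to be sequences and that the SOT on ${\rm Emb}_s(Y,X)$ is metrizable, so that ``$t_n \to t$'' testing on a countable dense set suffices.

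\textbf{Main obstacle.}
The delicate point is the manipulation of iterated limits: $\tilde t$ is itself defined as a limit of $g_i$'s, so statements like ``$h_k \circ \tilde t = \lim_i h_k \circ g_i$ and this behaves well as $k$ varies'' require care to avoid an illegitimate interchange of two limits. I expect the cleanest route is to avoid diagonalization entirely by systematically reducing every such claim to the uniqueness statement of Part (1) via the composition and inversion Lemma for strong partial isometries — i.e. recognize the relevant composite as a strong partial isometry extending a map on $Y$ whose extension is already known — which is exactly the strategy sketched for Part (2). The quantitative bookkeeping in Part (3) (tracking how $\delta$, $\varepsilon$ and the approximation errors combine) is routine but is where a careless constant could slip in.
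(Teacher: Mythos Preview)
Your proposal is correct and follows essentially the same strategy as the paper: define $\tilde t$ as the pointwise limit of approximating global isometries, establish uniqueness via the envelope property, prove $\tilde t(\env(Y))\subseteq \env(tY)$ directly and then obtain equality via the inverse $t^{-1}$, and finally handle continuity.

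Two places where the paper is cleaner than your sketch are worth noting. For the inclusion in Part~(2), where you gesture at ``a suitable diagonal/iterated-limit argument'', the paper avoids any diagonalization by a simple reindexing trick: given $(T_i)_{i\in I}$ defining $\tilde t$ and an arbitrary net $(U_j)_{j\in J}$ in $\isome(X)$ converging pointwise on $tY$, one passes to the common index set $I\times J$ with the product order, setting $T_{i,j}:=T_i$ and $U_{i,j}:=U_j$. Then $\|U_{i,j}T_{i,j}y - U_j(ty)\| = \|T_iy-ty\| \to 0$, so $(U_{i,j}T_{i,j})$ converges pointwise on $Y$; since $x\in\env(Y)$, Proposition~\ref{pfpfpfprrr} gives convergence of $(U_{i,j}T_{i,j}x)$, and comparing with $(U_{i,j}\tilde t(x))$ (which differs from it by $\|T_ix-\tilde t(x)\|\to 0$) yields convergence of $(U_j\tilde t(x))_j$. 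This product-ordering device is exactly the ``manipulation of iterated limits'' you flag as the main obstacle, and it dissolves the issue without any $\varepsilon$-bookkeeping.

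For Part~(3) the paper's argument is shorter than your quantitative one: given $t_n\to t$ in SOT on $Y$ (metrizable by separability) and $x\in\env(Y)$, choose $V_n\in\isome(X)$ with $d(V_n\!\rest Y, t_n)\le 1/n$ \emph{and} $\|V_nx-\tilde t_n x\|\le 1/n$ (possible by definition of $\tilde t_n$ as a limit). Then $V_n\to t$ pointwise on $Y$, so by the well-definedness in Part~(1), $V_nx\to \tilde t(x)$; hence $\tilde t_n x\to \tilde t(x)$. This sidesteps the $\varepsilon$--$\delta$ reformulation of the envelope entirely.
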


\begin{proof}
{\rm (1)}: An extension to $\env(Y)$ is  defined by $\tilde{t}(x)=\lim_{i \in I} T_i(x)$, if $t=\lim_i T_{i}\rest Y$ with $T_i \in {\rm Isom}(X)$. If $t'$ is a strong isometric embedding on $\env(Y)$ defined as pointwise limit of some $(U_i)_i$, extending $t$, then note that since the sequence $(T_i^{-1} U_i)_i$ is pointwise convergent to $\id_Y$ on $Y$ it is also pointwise convergent on $\env(Y)$, implying that $\tilde{t}$ and $t'$ coincide on $\env(Y)$.

{\rm (2)}: We claim that $\tilde{t}(\env(Y)) = \env(tY)$. We first prove  that $\tilde{t}(\env(Y)) \con \env(tY)$, so fix $x\in \tilde{t}(\env(Y))$. Let $(T_i)_{i\in I}$ be a net of isometries of $X$ that defines $\tilde{t}$. Suppose that $(U_j)_{j\in J} $ is an arbitrary sequence on $\isome(X)$ such that $(U_j  t y)_j$ converges for all $y\in Y$. Without loss of generality we may assume that $I=J$: One can use the product ordering $I\times J$  and define $T_{i,j}:=T_i$ and $U_{i,j}:=U_j$.   It follows that $(U_i \circ T_i y)_i$ converges for all $y\in Y$, so, by hypothesis, $(U_i \circ T_i x)_i$ converges, or equivalently $(U_i \tilde{t}(x))_i$ converges. This means that $\tilde{t}(x)\in \env (tY)$.  Now let $u:tY\to Y$ be the inverse of $t$. We know that $u\in \Emb_\mr{s}(TY,X)$, hence, $\tilde{u}\env(tY)\con \env(u t Y)=\env(Y)$. This means that if $x\in \env(tY)$, then $\tilde{u}(x)\in \env(Y)$, and so $x=\tilde{t}(\tilde{u}(x))\in \tilde{t}(\env(Y))$.

{\rm (3)}: By separability of $Y$, ${\rm Emb}(Y,X)$ is a metric space. Assume $(t_n)_n$ converges SOT to $t$ and let $x \in \env(Y)$.
Let $V_n$ be an element of ${\rm Isom}(X)$ such that $d((V_n)\rest {Y},t_n)\leq 1/n$, where $d$ is the SOT-convergence metric on ${\rm Emb}(Y,X)$, and so that $\|V_n x -\tilde{t}_n x\| \leq 1/n$ (by definition of $\tilde{t}_n$). Then note that $V_n$ converges SOT to $t$ on $Y$ and therefore $(V_n x)_n$ converges to $\tilde{t}x$ (by definition of $\tilde{t}$). So, $\tilde{t_n} x$ converges to $\tilde{t} x$ for all $x \in \env(Y)$. 
\end{proof}

\cor\label{ijeriojweiojrewr} The following are equivalent for $Y\con X$:
\begin{enumerate}[\rm(1)]
    \item $Y$ is strongly isometric to a full subspace of $X$.
    \item $\env(Y)$ is strongly isometric to $X$. 
\end{enumerate}

\fcor	
\begin{proof}
Suppose that $t:Y\to X$ is a strong isometry such that $tY$ is a full subspace. it follows from Proposition \ref{extension} {\rm (2)} that $\tilde{t}(\env(Y))=\env(tY)=X$, hence $\tilde{t}: \env(Y)\to X$ is a (surjective) strong isometry. 

 Suppose that $T:\env(Y) \to X$ is a surjective strong isometry. Then $t:=T\rest Y\in \Emb_\mr{s}(Y, X)$, $\tilde{t}=T$, and it follows from Proposition \ref{extension} {\rm (2)} that $X=T(\env(Y))=\env(T(Y))$, so $TY$ is a full subspace of $X$. 
\end{proof}  

The envelope of $Y$ admits an important characterization when $Y$ is assumed separable.

\begin{proposition}\label{extensionbis} Assume $Y$ is a separable  subspace of $X$. Then $\env(Y)$ is the maximum among subspaces $Z$ of $X$ with the property that 
\begin{enumerate}[\rm (1)]
\item
 $Z$ contains $Y$.
\item Any embedding $t$  in ${\rm Emb}_{s}(Y,X)$ extends uniquely  to an embedding $\tilde{t}$ in ${\rm Emb}_{s}(Z,X)$.
\item  The mapping $t \mapsto \tilde{t}$ is SOT-SOT continuous.
\end{enumerate}
\end{proposition}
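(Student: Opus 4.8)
The plan is to reduce the statement to a single maximality assertion and then invoke the reformulation of the envelope in Proposition~\ref{pfpfpfprrr}. Indeed, Proposition~\ref{extension} already shows that $Z=\env(Y)$ itself satisfies (1)--(3): (1) is property~(a) of envelope maps, (2) is Proposition~\ref{extension}(1), and (3) is Proposition~\ref{extension}(3), which is precisely where the separability of $Y$ enters. So $\env(Y)$ does belong to the family of subspaces satisfying (1)--(3), and it only remains to prove that it is the \emph{largest} one, i.e.\ that every subspace $Z\con X$ satisfying (1)--(3) is contained in $\env(Y)$.

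To prove this maximality I would fix such a $Z$ and an arbitrary $x\in Z$, and verify the criterion of Proposition~\ref{pfpfpfprrr}: that $(T_i x)_{i\in I}$ converges whenever a net $(T_i)_{i\in I}$ in $\isome(X)$ converges pointwise on $Y$. Given such a net, its pointwise limit is a linear isometric embedding $t\colon Y\to X$ that lies in $\Emb_{\mr s}(Y,X)=\overline{\isome(X)\rest Y}^{\mr{SOT}}$; by (2) it has a unique extension $\tilde t\in\Emb_{\mr s}(Z,X)$. The key observation is that for each $i$ the restriction $T_i\rest Z$ belongs to $\isome(X)\rest Z\subseteq\Emb_{\mr s}(Z,X)$ and extends $T_i\rest Y\in\Emb_{\mr s}(Y,X)$, so the uniqueness clause of (2) forces $\widetilde{T_i\rest Y}=T_i\rest Z$. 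Since $T_i\rest Y\to t$ in the SOT of $\Emb_{\mr s}(Y,X)$ by hypothesis, the continuity assumed in (3) yields $\widetilde{T_i\rest Y}\to\tilde t$ in the SOT of $\Emb_{\mr s}(Z,X)$, that is $T_i\rest Z\to\tilde t$ pointwise on $Z$; in particular $T_i x\to\tilde t(x)$, so $(T_i x)_i$ converges. As the net was arbitrary, Proposition~\ref{pfpfpfprrr} gives $x\in\env(Y)$, and since $x\in Z$ was arbitrary, $Z\con\env(Y)$. Together with the first paragraph this proves the proposition.

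The argument is short, and the only point where I expect any care is needed is the last step, passing from ``$s\mapsto\tilde s$ is SOT-SOT continuous'' to ``$\widetilde{T_i\rest Y}\to\tilde t$'' for a \emph{net} $(T_i)$. This is legitimate because a continuous map into a topological space preserves the convergence of all nets; and in any case, $Y$ being separable, the SOT on the norm-bounded family $\isome(X)\rest Y\subseteq\Emb_{\mr s}(Y,X)$ is metrized by $d(S,T)=\sum_k 2^{-k}\min(1,\nrm{Sy_k-Ty_k})$ for a fixed dense sequence $(y_k)$ in $Y$, so that sequential continuity upgrades to full continuity. Everything else is unwinding the definitions of $\Emb_{\mr s}(\,\cdot\,,X)$ and of the isometric envelope.
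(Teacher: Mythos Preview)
Your proof is correct and follows essentially the same approach as the paper's: both first note that $\env(Y)$ satisfies (1)--(3) via Proposition~\ref{extension}, and for maximality both observe that $T_i\rest Z=\widetilde{T_i\rest Y}$ by the uniqueness in (2), then use the continuity in (3) to conclude that $(T_i x)_i$ converges. The only cosmetic difference is that the paper works directly with sequences $(T_n)_n$ (legitimate since $Y$ is separable, so the SOT on $\isome(X)\rest Y$ is metrizable), whereas you phrase things for nets and then note in your last paragraph why this is equivalent.
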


\begin{proof} The space $\env(Y)$ does satisfy {\rm (1)}, {\rm (2)} and {\rm (3)}. Let now $Z$ be such a subspace and $z \in Z$. If $(T_n)_n$ is a sequence of isometries on $X$ converging SOT on Y, let $t_n=T_{n|Y} \in {\rm Emb}_\mr{ext}(Y,X) \subseteq {\rm Emb}_\mr{s}(Y,X)$ and $t=\lim_n t_n$. By  definition of $Z$, $(T_{n}\rest Z)_n=(\tilde{t}_n)_n$ tends SOT to $\tilde{t}$; therefore $(T_n z)_n$ converges. This proves that $z \in \env(Y)$ and therefore that $Z \subseteq \env(Y)$.
\end{proof}

Propositions \ref{extension} and \ref{extensionbis} lead us to see the class of subspaces of $X$ which are isometric envelopes as a 
``strongly isometric skeleton" of $X$, in the sense that it is a subfamily of subspaces of $X$ ``containing all the information" on strong isometric embeddings inside $X$.

Finally a relevant case of strong isometric embedding defined on a subspace $Y$ are those whose image is equal to $Y$, and may therefore be seen as surjective isometries on $Y$. In general, not all surjective isometries on $Y$ is of this form. However in a class of spaces that we will consider in the next subsection  \ref{oi43iot43t4r} this is the case and we will see that is implies that $Y\con X$ is a $g$-embedding.

\begin{definition}[Strong isometry] Let $X$ be a Banach space and $Y$ a closed subspace of $X$. We let $\isome_\mr{s}(Y)$ be the set of {\em strong isometries} on $Y$, i.e.,  of strong isometric embeddings of $Y$ whose image is $Y$. \end{definition}

Equivalently $t \in \isome(Y)$ is a strong isometry on $Y$ if and if only if it is a surjective isometry on $Y$ which belongs to ${\rm Emb}_s(Y,X)$. In other words, elements of $\isome_\mr{s}(Y)$ are isometries on $Y$ which, as maps with value in $X$, belong to $\Emb_\mr{s}(Y,X)$. In what follows we always consider the SOT.

\begin{lemma} Let $X$ be a Banach space and $Y$ a closed subspace of $X$. 
Then $\isome_\mr{s}(Y)$ is a closed subgroup of $\isome(Y)$. 
\end{lemma}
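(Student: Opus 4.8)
The plan is to establish the two assertions in turn: first that $\isome_\mr{s}(Y)$ is a subgroup of $\isome(Y)$, and then that it is SOT-closed in $\isome(Y)$.

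For the algebraic part the key remark is that a strong isometry on $Y$ is exactly a strong partial isometry both of whose domain and range are $Y$. The identity $\id_Y$ is the restriction to $Y$ of $\id_X\in\isome(X)$, so $\id_Y\in\isome(X)\rest Y\subseteq\Emb_\mr{s}(Y,X)$ and thus $\id_Y\in\isome_\mr{s}(Y)$. Stability under inversion and composition then follows at once from the Lemma above, which says that the inverse of a strong partial isometry is again one and that the (defined) composition of two strong partial isometries is again one: applying it with all domains and codomains equal to $Y$ gives $t\in\isome_\mr{s}(Y)\Rightarrow t^{-1}\in\isome_\mr{s}(Y)$ and $s,t\in\isome_\mr{s}(Y)\Rightarrow s\circ t\in\isome_\mr{s}(Y)$.

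For closedness I would take a net $(t_i)_{i\in I}$ in $\isome_\mr{s}(Y)$ converging in the SOT of $\isome(Y)$ to some $t\in\isome(Y)$, and prove $t\in\isome_\mr{s}(Y)$. Since the norm of $Y$ is the restriction of that of $X$, post-composition with the inclusion $i_{YX}\colon Y\to X$ carries SOT-convergence in $\isome(Y)$ to SOT-convergence in $\Emb(Y,X)$; hence $(i_{YX}\circ t_i)_i$ converges SOT to $i_{YX}\circ t$. Each $i_{YX}\circ t_i$ belongs to $\Emb_\mr{s}(Y,X)=\overline{\isome(X)\rest Y}^{\mr{SOT}}$, which is by construction SOT-closed, so $i_{YX}\circ t\in\Emb_\mr{s}(Y,X)$. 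As $t$ is also a surjective isometry of $Y$ (being in $\isome(Y)$), it is a strong isometry on $Y$, i.e. $t\in\isome_\mr{s}(Y)$.

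The argument is essentially bookkeeping and uses no structural hypothesis on $X$ (no reflexivity, convexity, or separability). The only point deserving a word of justification — and the only place a slip could occur — is that the SOT which $\isome_\mr{s}(Y)$ inherits from $\isome(Y)$ coincides with the one it inherits from $\Emb_\mr{s}(Y,X)$; this is immediate because $T\mapsto i_{YX}\circ T$ is an SOT-to-SOT homeomorphism onto its image, $i_{YX}$ being an isometric embedding. With this identification the closedness step above is unambiguous.
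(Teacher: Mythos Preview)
Your proof is correct. The only organizational difference from the paper is that for the subgroup part you invoke the previous lemma on strong partial isometries, while the paper instead gives a direct net computation: choosing $t=\lim_i T_i\rest Y$ and $u=\lim_i U_i\rest Y$ with $T_i,U_i\in\isome(X)$, it verifies that $T_iU_i\rest Y\to tu$ and $T_i^{-1}\rest Y\to t^{-1}$ in SOT. For closedness the paper simply says ``it is clearly closed'', which is exactly the argument you spell out using that $\Emb_\mr{s}(Y,X)$ is SOT-closed by definition.
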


\begin{proof} It is clearly closed. If $t, u$ are two such elements, with $t=\lim_i T_{i}\rest Y$ 
and $u=\lim_i U_{i}\rest Y$, then $({T_i U_i}\rest {Y})_i=(T_i (U_{i}\rest Y-u) +T_i u)_i$ also tends SOT to $tu$.
Likewise $t^{-1}=\lim_i (T_{i}^{-1})\rest{Y}$.
\end{proof}

\begin{corollary}\label{oij4io3tbjjittre} If $Y$ is a closed subspace of $X$, then $t\in \isome_\mr{s}(Y)\mapsto \tilde{t}\in \isome_\mr{s}(\env(Y))$ is an embedding of topological groups.
\end{corollary}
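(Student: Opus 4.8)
The plan is to verify the four things that together mean that $\Phi\colon t\mapsto\tilde t$ is an embedding of topological groups: that $\Phi$ takes values in $\isome_\mr{s}(\env(Y))$, that it is an injective group homomorphism, that it is SOT-SOT continuous, and that the inverse map on its image is SOT-SOT continuous. Throughout I would use Proposition \ref{extension}, the preceding lemma on inverses and compositions of strong partial isometries, and the fact that $\isome_\mr{s}(Y)$ and $\isome_\mr{s}(\env(Y))$ are topological groups for the SOT (each being a closed subgroup of the topological group $\isome(\,\cdot\,)$).

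First I would dispatch the algebraic and ``easy'' topological parts. By Proposition \ref{extension}(1) every $t\in\isome_\mr{s}(Y)$ has a unique extension $\tilde t\in\Emb_\mr{s}(\env(Y),X)$; since $t$ is surjective we have $tY=Y$, so Proposition \ref{extension}(2) gives $\tilde t(\env(Y))=\env(tY)=\env(Y)$, whence $\tilde t\in\isome_\mr{s}(\env(Y))$ and $\Phi$ is well defined. For the homomorphism property, note that $\tilde t\,\tilde u$ is, by the composition lemma, a strong partial isometry of $\env(Y)$ extending $tu$ (indeed $\tilde t\tilde u(y)=\tilde t(uy)=t(uy)$ for $y\in Y$), so uniqueness in Proposition \ref{extension}(1) forces $\widetilde{tu}=\tilde t\,\tilde u$; with $\widetilde{\id_Y}=\id_{\env(Y)}$ this also yields $\widetilde{t^{-1}}=\tilde t^{\,-1}$. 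Injectivity is immediate from $\tilde t\rest Y=t$, and this same identity shows the inverse map $\tilde t\mapsto\tilde t\rest Y$ is SOT-SOT continuous, since evaluation at vectors of $Y\subseteq\env(Y)$ is continuous.

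The main work, and the point I expect to be the main obstacle, is the continuity of $\Phi$ itself. Since $\Phi$ is a homomorphism of topological groups, it suffices to prove continuity at the identity: for a net $(t_\alpha)$ in $\isome_\mr{s}(Y)$ with $t_\alpha\to\id_Y$ SOT, one must show $\tilde t_\alpha(x)\to x$ for every $x\in\env(Y)$. Proposition \ref{extension}(3) is not directly applicable, as it gives only sequential continuity under separability of $Y$; instead I would argue through the uniform reformulation of the envelope. Fix $x\in\env(Y)$ and $\vep>0$; by definition of $\env(Y)$ there are $y_1,\dots,y_n\in Y$ and $\delta>0$ such that every global isometry $g\in\isome(X)$ with $\max_i\|gy_i-y_i\|\le\delta$ satisfies $\|gx-x\|\le\vep$. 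For $\alpha$ large we have $\max_i\|t_\alpha y_i-y_i\|<\delta/2$. The crucial step is then to transfer this estimate from the strong isometries $t_\alpha$ to genuine ones: since $t_\alpha\in\Emb_\mr{s}(Y,X)=\overline{\isome(X)\rest Y}^\mr{SOT}$ and $\tilde t_\alpha(x)$ is by construction a limit of $T(x)$ along isometries $T$ with $T\rest Y\to t_\alpha$, I can choose a single $T\in\isome(X)$ with $\max_i\|Ty_i-t_\alpha y_i\|<\delta/2$ and $\|Tx-\tilde t_\alpha x\|<\vep$. Then $\max_i\|Ty_i-y_i\|<\delta$, so the uniform estimate gives $\|Tx-x\|\le\vep$, and hence $\|\tilde t_\alpha x-x\|<2\vep$.

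This establishes continuity at the identity, and therefore everywhere, without any separability assumption and for arbitrary nets. Combining the continuity of $\Phi$ with the continuity of the restriction inverse shows that $\Phi$ is a homeomorphism onto its image inside $\isome_\mr{s}(\env(Y))$, i.e.\ an embedding of topological groups. I expect the only delicate issue to be exactly the transfer step above — replacing each $t_\alpha$ by one well-chosen global approximant $T$ so that the defining estimate of the envelope, which is phrased in terms of elements of $\isome(X)$, can be invoked; everything else reduces to the uniqueness and functoriality already recorded in Proposition \ref{extension} and the group-theoretic reduction to continuity at the identity.
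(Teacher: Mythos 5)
Your proposal is correct, and it proves everything the corollary asserts; but the mechanics differ from the paper's proof in a way worth recording. The paper argues at an arbitrary point of the group: given a net $t_i\to t$ in $\isome_\mr{s}(Y)$, it forms the extensions $T_i=e(t_i)$, notes that they converge pointwise to $t$ on $Y$, and then invokes Proposition \ref{pfpfpfprrr} (membership of $x$ in $\env(Y)$ means that nets converging pointwise on $Y$ converge at $x$) together with the uniqueness part of Proposition \ref{extension}(1) to identify the pointwise limit of $(T_i)$ with $e(t)$. You instead (a) reduce to continuity at the identity via the homomorphism property and the fact that both groups are topological groups under SOT, and (b) replace Proposition \ref{pfpfpfprrr} by the uniform $\varepsilon$--$\delta$ form of envelope membership (the same reformulation the paper itself uses at the start of the proof of Proposition \ref{convG}), combined with an explicit choice, for each $\alpha$, of a single global isometry $T\in\isome(X)$ approximating $\tilde t_\alpha$ simultaneously on $y_1,\dots,y_n$ and at $x$. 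This transfer step is the genuine content of your route, and it buys something: the paper's invocation of Proposition \ref{pfpfpfprrr} is, strictly speaking, applied to a net of \emph{strong} isometries of $\env(Y)$ rather than to a net in $\isome(X)$, so an approximation/diagonalization by global isometries is implicitly needed there; your argument supplies exactly that missing approximation, at the cost of a slightly longer proof. The remaining pieces (well-definedness and surjectivity of the extension onto $\env(Y)$ via Proposition \ref{extension}(1)--(2), the identity $\widetilde{tu}=\tilde t\,\tilde u$ by uniqueness, injectivity, and continuity of the restriction map as inverse) coincide with what the paper dismisses as ``clear'', and are handled correctly.
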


\begin{proof}
The group embedding property is clear. Let us denote this embedding by $e$. Then let $t_i \in \isome_{s}(Y)$ converge to $t \in \isome_{s}(Y)$,
and let $T_i=e(t_i)$ defined on $\env(Y)$ extending $t_i$. Note that $T_i$ converges pointwise to $t$ on $Y$, so by Proposition \ref{pfpfpfprrr}, $i_{\env(Y),X} T_i$ converges pointwise on $\env(Y)$ to some $T
\in {\rm Emb}_\mr{s}(\env(Y))$ such that $T_{|Y}=t$. By the uniqueness result {\rm(1)} in Proposition \ref{extension}, $T_i$ converges pointwise to $e(t)$. \end{proof}

\subsection{Envelopes in AUH or Fra\"iss\'e spaces}\label{oi43iot43t4r}
Recall that a space $X$ is called ultrahomogeneous (UH)  when   $\Emb_\mr{ext}(Y,X)= \Emb(Y,X)$   for every finite dimensional subspace $Y$ of $X$ ; $X$  is called approximately ultrahomogeneous (AUH) when  $\Emb_\mr{s}(Y,X)= \Emb(Y,X)$ for such subspaces $Y$ of $X$.

A separable \auh space without finite cotype must  be isometric to the Gurarij space. This follows from the fact that two separable Fra\"iss\'e spaces which are finitely representable into each other must be isometric \cite[Theorem 2.19]{FLMT}. In the finite cotype case let us also note that if  a Banach space $X$ is reflexive (or even just Asplund or with the RNP) and  \auh, then, since $X$ is in particular almost transitive, then it is also superreflexive and  $X$ and $X^*$  are uniformly convex \cite{CabelloSanchez}.  

We obtain the following results 
regarding isometric embeddings in \auh spaces, easy to prove directly from the corresponding results for strong isometric embeddings in general spaces.



The next is Proposition \ref{extension} reformulated for \auh spaces:

\begin{proposition}\label{ext}  Assume $X$ is \auh, let $Y$ be a subspace of $X$, let $t \in {\rm Emb}(Y,X)$ be an isometric embedding of $Y$ into $X$. Then
\begin{enumerate}[\rm(1)]
\item $t$ extends uniquely  to an isometric embedding $\tilde{t} \in {\rm Emb}(\env(Y),X)$; and $\tilde{t}(\env(Y))=\env(t(Y))$.
\end{enumerate}
Assume that $Y$ is separable. Then,
\begin{enumerate}[\rm(1)]  \addtocounter{enumi}{1}

\item  the map $t \mapsto \tilde{t}$ is SOT-SOT continuous.
\item   $\env(Y)$ is the maximum among spaces $Y'$ such that any isometric embedding $t: Y \rightarrow X$ extends uniquely to an isometric embedding $\tilde{t}: Y' \rightarrow X$, and such that $t \mapsto \tilde{t}$ is SOT-SOT  continuous. \qed

\end{enumerate}  
\end{proposition}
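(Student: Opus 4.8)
The plan is to reduce all three items to the corresponding statements for strong isometric embeddings, namely Propositions \ref{extension} and \ref{extensionbis}, by means of a single elementary remark valid in any \auh space $X$: for \emph{every} closed subspace $Z$ of $X$ one has ${\rm Emb}(Z,X)={\rm Emb}_\mr{s}(Z,X)$. To prove this remark, let $u\in{\rm Emb}(Z,X)$; a basic SOT-neighbourhood of $u$ in ${\rm Emb}(Z,X)$ is determined by finitely many vectors $z_1,\dots,z_n\in Z$ and some $\varepsilon>0$. Put $F=\linspan\{z_1,\dots,z_n\}$, a finite dimensional subspace of $Z$. Then $u\rest F\in{\rm Emb}(F,X)$, so by the very definition of the \auh property there exists $g\in\isome(X)$ with $\|g z_i-u z_i\|\le\varepsilon$ for $i=1,\dots,n$. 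Hence $u\in\overline{\isome(X)\rest Z}^\mr{SOT}={\rm Emb}_\mr{s}(Z,X)$, the reverse inclusion being trivial.

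Granting the remark, item (1) is immediate. Applied to $Y$, it gives $t\in{\rm Emb}_\mr{s}(Y,X)$, so Proposition \ref{extension} (1)--(2) produces $\tilde t\in{\rm Emb}_\mr{s}(\env(Y),X)$ extending $t$, with $\tilde t(\env(Y))=\env(t(Y))$, and $\tilde t$ is unique among strong isometric embeddings of $\env(Y)$ that extend $t$. To promote this to uniqueness in ${\rm Emb}(\env(Y),X)$, apply the remark a second time, now to the closed subspace $\env(Y)$: any isometric embedding of $\env(Y)$ into $X$ is automatically strong, so $\tilde t$ is the unique element of ${\rm Emb}(\env(Y),X)$ extending $t$. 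This double use of the remark is the only point requiring a moment's care, and I expect it to be the only genuinely non-mechanical step of the proof.

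For (2) and (3) assume $Y$ separable. Item (2) is exactly Proposition \ref{extension} (3), read through the identifications ${\rm Emb}(Y,X)={\rm Emb}_\mr{s}(Y,X)$ and ${\rm Emb}(\env(Y),X)={\rm Emb}_\mr{s}(\env(Y),X)$ supplied by the remark. For (3), Proposition \ref{extensionbis} characterizes $\env(Y)$ as the largest superspace $Z$ of $Y$ with the property that every $t\in{\rm Emb}_\mr{s}(Y,X)$ extends uniquely to some $\tilde t\in{\rm Emb}_\mr{s}(Z,X)$ and that $t\mapsto\tilde t$ is SOT-SOT continuous; applying the remark to $Y$ and to each candidate superspace $Z$ converts every occurrence of ${\rm Emb}_\mr{s}$ into ${\rm Emb}$, which is precisely the maximality property asserted in (3). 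All of this is routine once the opening remark is available, so there is no real obstacle here --- the content of the statement is the observation that the \auh property forces extendability of isometric embeddings ``for free'' on arbitrary, not merely finite dimensional, subspaces.
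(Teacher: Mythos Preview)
The proposal is correct and follows exactly the approach the paper intends: the paper presents Proposition \ref{ext} as ``Proposition \ref{extension} reformulated for \auh spaces'' with a \qed and no further argument, relying on precisely the observation you spell out, namely that in an \auh space every isometric embedding of any closed subspace is automatically strong. Your write-up simply makes explicit what the paper leaves implicit.
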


\begin{definition}[linear $g$-embedding]
We say that a subspace $Y$ of $X$ is a linear $g$-embedding when  is an embedding of topological groups $e:\isome(Y)\to \isome(X)$ such that $e(h)\rest Y= h$ for every $h\in \isome(Y)$.  
\end{definition}
This is   the linear version of the notion of $g$-embedding of a   subspace $M$ of a metric space $L$ introduced in   \cite[Definition 3.1]{Usp} (see also \cite[Definition 5.2.6]{Pes}), that demands the existence of an embedding of topological groups $e:\mc{I}so(M)\to \mc{I}so(L)$ such that $e(h)\rest Y= h$ for every $h\in \mc{I}so(L)$, where $\mc{I}so(N)$ is the group of isometries of $N$.  Recall that the classical Mazur-Ulam Theorem states that the group $\mc{I}so(Y)$ of surjective, not necessarily linear, isometries on $Y$ is the group of the affine ones on $Y$. From this it follows easily that if $e: \isome(Y)\to \isome(X)$ is an embedding of topological groups, then $\bar e: \mc{I}so(Y)\to\mc{I}so(X)$ defined by 
$\bar e(\ga)= e(\ga- \ga(0))+\ga(0)$ is also an embedding as topological groups, and consequently every linear $g$-embedding is a $g$-embedding.

We have the following direct consequence of  Corollary \ref{oij4io3tbjjittre}.
\begin{proposition}\label{extisometry} 
Assume $X$ is \auh and let $Y$ be a subspace of $X$.  Then
\begin{enumerate}[\rm(1)]
\item any surjective isometry $t$ on $Y$ extends uniquely  to a surjective isometry $\tilde{t}$ on $\env(Y)$. 
\item The map $t \mapsto \tilde{t}$ defines a topological (group) embedding of
$\isome(Y)$ into $\isome(\env(Y))$.
\item this is the unique such map for which $\tilde{t}$ is an extension of $t$ for each $t\in \isome(Y)$. \end{enumerate}
Consequently $Y\con \env(X)$ is a linear $g$-embedding. \qed
\end{proposition}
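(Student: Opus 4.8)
The statement to prove is Proposition~\ref{extisometry}, which says that in an \auh space $X$, for any subspace $Y$: (1) every surjective isometry $t$ on $Y$ extends uniquely to a surjective isometry $\tilde t$ on $\env(Y)$; (2) the map $t \mapsto \tilde t$ is a topological group embedding $\isome(Y) \to \isome(\env(Y))$; (3) this extension map is the unique one; and consequently $Y \subseteq \env(X)$ is a linear $g$-embedding. The plan is to deduce everything from the general-space results already established in the previous subsection, specializing them via the \auh hypothesis. The key bridge is the observation that when $X$ is \auh, the notions of \emph{strong} isometric embedding and ordinary isometric embedding coincide on finite dimensional subspaces, and hence on all subspaces of $X$: indeed $\Emb_\mr{s}(Y,X) = \Emb(Y,X)$ and $\isome_\mr{s}(Y) = \isome(Y)$ for every closed subspace $Y$ of $X$. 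This is essentially the content of Proposition~\ref{ext} and the discussion preceding it, and it should already be recorded (or be immediate) at this point.

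First I would invoke Corollary~\ref{oij4io3tbjjittre}, which states that $t \in \isome_\mr{s}(Y) \mapsto \tilde t \in \isome_\mr{s}(\env(Y))$ is an embedding of topological groups. Since $X$ is \auh, $\isome_\mr{s}(Y) = \isome(Y)$ and $\isome_\mr{s}(\env(Y)) = \isome(\env(Y))$, so this corollary \emph{is} exactly the combination of statements (1) (existence of the extension and its surjectivity, the latter from Proposition~\ref{extension}(2) giving $\tilde t(\env(Y)) = \env(tY) = \env(Y)$ since $tY = Y$) and statement (2). For uniqueness within statement (1), I would cite Proposition~\ref{extension}(1): the extension of a strong isometric embedding to the envelope is unique, and again $\isome(Y) \subseteq \Emb_\mr{s}(Y,X)$ under \auh. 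For statement (3), the uniqueness of the \emph{map} $t \mapsto \tilde t$ among those sending each $t$ to an extension of itself: this follows immediately from the pointwise uniqueness in (1), since any candidate map must assign to each $t$ the unique extension, hence agrees with our map on the nose.

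For the final clause, that $Y \subseteq \env(X)$ — wait, this should read $Y \subseteq X$ is a linear $g$-embedding, or rather that the inclusion of $Y$ into $X$ is a linear $g$-embedding; I would first confirm the intended reading, but the natural statement is: the embedding $e: \isome(Y) \to \isome(X)$ defined by composing $t \mapsto \tilde t \in \isome(\env(Y))$ with the further extension $\isome(\env(Y)) \to \isome(X)$ (again via \auh, since $\env(Y)$ is a subspace of $X$ and every surjective isometry of $\env(Y)$ is a strong partial isometry, hence extends to $\isome(X)$ by another application of the extension results — or more directly, $\env(\env(Y)) = \env(Y)$ and one uses that $\env(Y)$ is itself a subspace to which Corollary~\ref{oij4io3tbjjittre}-type reasoning applies with $X$ as ambient). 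Actually the cleanest route: apply Proposition~\ref{extensionbis}/Proposition~\ref{ext}(3) philosophy directly, or note that a surjective isometry of $Y$ extends to $\isome(\env(Y))$ by (1)--(2), and a surjective isometry of $\env(Y)$ extends to a surjective isometry of $\env(\env(Y)) = \env(Y)$... this is circular; instead one extends to $\isome(X)$ using that $\env(Y)$ is approximable by finite dimensional pieces. I would handle this by: given $t \in \isome(Y)$, the extension $\tilde t$ lies in $\isome_\mr{s}(\env(Y))$, and since $\env(Y)$ is a subspace of the \auh space $X$, $\isome_\mr{s}(\env(Y)) = \isome(\env(Y))$ consists of restrictions/limits of elements of $\isome(X)$; one then wants a continuous section. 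The existence of such a section into $\isome(X)$ with $e(h)\rest Y = h$ is precisely the $g$-embedding condition, and it follows by composing the topological group embedding $\isome(Y) \hookrightarrow \isome(\env(Y))$ with a topological group embedding $\isome(\env(Y)) \hookrightarrow \isome(X)$; the latter would itself come from Corollary~\ref{oij4io3tbjjittre} applied with the subspace $\env(Y)$, noting $\env(\env(Y)) = \env(Y)$ by idempotence of the envelope map, so the target group is $\isome_\mr{s}(\env(Y)) = \isome(\env(Y))$ and the embedding into $\isome(X)$ must instead be obtained from the fact that every element of $\isome(\env(Y))$ is a strong partial isometry on $X$, combined with a measurable/continuous selection of an extending global isometry.

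\emph{Expected main obstacle.} The routine parts — existence, uniqueness, the group homomorphism property — drop out of Corollary~\ref{oij4io3tbjjittre} and Proposition~\ref{extension} with essentially no work once one installs the equivalence $\isome_\mr{s}(\cdot) = \isome(\cdot)$ valid in \auh spaces. The genuinely delicate point is producing the $g$-embedding into $\isome(X)$ itself (not merely into $\isome(\env(Y))$) \emph{as a topological group embedding}: one needs a continuous group homomorphism $e: \isome(Y) \to \isome(X)$ lifting the identity on $Y$, and while the extension to $\env(Y)$ is canonical and continuous, extending further to $\isome(X)$ requires a continuous (or at least Borel) section of the restriction map $\isome(X) \to \isome_\mr{s}(\env(Y))$, which need not exist functorially. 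I anticipate the actual proof sidesteps this by a direct argument — likely just observing that the composite $\isome(Y) \to \isome(\env(Y))$ already witnesses $Y \subseteq \env(Y)$ as a $g$-embedding, and then separately that $\env(Y) \subseteq X$ is a $g$-embedding, so that the final sentence is really a statement about the two-step composition, whose continuity is inherited. If instead the paper genuinely needs a global section, the standard tool would be that $\isome(X)$ is a Polish group and the restriction map is a continuous surjection with the relevant fibers being cosets of a closed subgroup, so a Borel section exists; but upgrading Borel to continuous is where I would expect to either cite a specific structural result about \auh isometry groups or restrict attention to the setting where $Y$ is separable.
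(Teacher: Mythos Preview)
Your handling of (1), (2), (3) is correct and matches the paper exactly: the paper simply marks the proposition with a \qed, declaring it a direct consequence of Corollary~\ref{oij4io3tbjjittre} together with the observation (implicit in Proposition~\ref{ext}) that in an \auh space one has $\Emb_\mr{s}(Y,X)=\Emb(Y,X)$ and hence $\isome_\mr{s}(Y)=\isome(Y)$ for every closed subspace $Y$.

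Your ``expected main obstacle'' is a phantom created by a typo in the statement. The final clause should read ``$Y\subseteq \env(Y)$ is a linear $g$-embedding'', not ``$Y\subseteq \env(X)$''; with that correction it is an immediate restatement of (2), since (2) gives precisely the required topological group embedding $\isome(Y)\hookrightarrow \isome(\env(Y))$ with $\tilde t\rest Y=t$. There is no need to lift to $\isome(X)$, no need for sections of restriction maps, and no separability issue. You actually guessed this correct reading yourself midway through (``likely just observing that the composite $\isome(Y)\to\isome(\env(Y))$ already witnesses $Y\subseteq\env(Y)$ as a $g$-embedding''), so the remainder of your discussion can be discarded.
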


\cor\label{oiu34jt433} 
Suppose that $X$ is \auh, let $Y\con X$ be a subspace, and let $\ga\in \Emb(Y, \env(Y))$. 
Then $\ga$  is such that $\env(\ga Y)=\env(Y)$ if and only if $\ga\in \Emb_\mr{ext}(Y,\env(Y))$, that is, if $\ga$ is a restriction of a surjective isometry of $\env(Y)$. 
In particular,  any partial isometry between full subspaces of $X$ extends to an isometry of $X$.    


\fcor
\prue 
For suppose that $g\in \isome(\env(Y))$, and set $\ga:=g\rest Y$.  Then, by uniqueness, $\bar \ga= g$ and, by Proposition \ref{extisometry},  $\env(\ga Y)=\bar{\ga}\env(Y)=g\env(Y)=\env(Y)$. For suppose now that $\ga\in \Emb(Y,\env(Y))$ is such that $\env(\ga Y)=\env(Y)$. We claim that $\bar \ga\in \isome(\env(Y))$: First of all, $\bar \ga(\env(Y))=\env(\ga Y)\con \env(Y)$, so $\bar\ga\in \Emb(\env(Y),\env(Y))$. Let $\eta\in \Emb(\ga Y, \env(Y))$ be such that $\eta(\ga Y)=Y$ and $\eta \circ \ga =\id_Y$. Then, similarly we get that   $\bar \eta \in \Emb (\env(\ga Y), \env (\ga Y))= \Emb (\env(Y), \env (Y))$. Since $\bar \eta \circ \bar \ga \rest Y= \eta\circ \ga = \id_Y $, it follows by uniqueness of the extensions that $\bar \eta \circ \bar \ga= \id_{\env(Y)}$, and consequently $\bar \eta$ is surjective and $\bar \ga$ as well. 
%
\fprue
Observe that  a subspace $Y$ of $X$  and $g Y$ for $g\in \isome(X)$ are placed similarly inside $X$ because both $Y \equiv gY$ but also the quotient map $\widehat{g}: X/Y \to X/(gY)$, $x+ Y\mapsto g(x)+g Y$ is a surjective isometry compatible with $g\rest Y: Y\to g Y$. In this way, any of the $\isome(X)$-orbits of the action by composition in $\Emb(Y,X)$ is called an {\em isometric position} of $Y$ inside $X$.  In this way, the previous Corollary says that when $X$ is \auh  the isometric position  defined by the inclusion $i: Y\to \env(Y)$ consists of the isometric embeddings $\ga: Y\to \env(Y)$ such that $\env(\ga Y)=\env(Y)$.  From a homological point of view, the exact sequences
$0 \rightarrow Y \rightarrow E(Y) \rightarrow E(Y)/Y \rightarrow 0$
and
$0 \rightarrow \gamma Y \rightarrow E(\gamma Y) \rightarrow E(\gamma Y)/\gamma Y \rightarrow 0$ are isometrically equivalent (in the sense that the extension of $\gamma$ to a map between the envelopes making the diagram commute is an isometric map), and the associated quotient $E(Y)/Y$ is isometrically unique (in the sense that $E(\gamma Y)/\gamma Y$ is independent of $\gamma$).

\begin{definition}[position]
Given a \auh space $X$, the {\em position} of a subspace $Y$ inside its envelope $\env(Y)$ is the $\isome(\env(Y))$-orbit of the inclusion $i : Y\to \env(Y)$. 

The {\em full position} of a subspace $Y$ of $X$ is (if exists) the position of some (any) full isometric copy of $X$. We call {\em full quotient of $X$ by $Y$} the isometrically unique associated quotient $X/Y$.
\end{definition}



 This definition was inspired by a notion of {\em isomorphic position} related to the so-called automorphic space problem, see \cite{CP}.  It is worth mentioning that it does not seem possible to extend the result of uniqueness of full position inside the space $L_1$ to the context of ``almost-isometric" position: It is proved in \cite[Theorem V.1 and Remark (2) on page 284]{GoKaLi} that there is a subspace $X$ of $L_1$ such that (a) every isometric embedding of $X$ into $L_1$ extends uniquely to a surjective isometry on $L_1$, but (b) for every $\de>0$ there is a $\de$-embedding $\ga_\de:X\to L_1$ such that $\inf_{\de>0} d_\mr{BM}(L_1/X, L_1/\ga_\vep(X))>1$, where $d_\mr{BM}$ is the Banach-Mazur (multiplicative) distance. 
 
We finish this subsection with some additional observations on \auh with a stronger extension property.  
Recall that ${\rm Emb}_\de(E,X)$ is the collection of all linear mappings $\ga: E\to X$ such that $(1+\de)^{-1} \nrm{e}\le \nrm{ \ga e}\le (1+\de)\nrm{e}$ for every $e\in  E$.

\begin{definition}[Fraïssé space]\label{fraisse}
A Banach space $X$ is Fra\"iss\'e when for
every $\vep>0$ and every $k \in \N$, there is a $\de>0$ such that 
for every $k$-dimensional subspace $E$ of $X$ and every $t \in {\rm Emb}_\delta(E,X)$, there exists $T \in \isome(X)$ such that
$\|T\rest E-t\| \leq \vep$.
 \end{definition}
 
 The following is the main conjecture of
 \cite{FLMT}.

\begin{conjecture}
The spaces $L_p$ for $p \neq 4,6,..$ and the Gurarij space \Gurarij are the only separable Fra\"iss\'e or even \auh spaces.
\end{conjecture}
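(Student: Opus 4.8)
Since this is a conjecture, what follows is a plan of attack rather than a proof; the envelope machinery developed above is designed precisely to serve it. The plan is organized by finite cotype. If $X$ is separable \auh without finite cotype, then, as already noted, $c_0$ --- hence every finite dimensional space, hence the Gurarij space --- is finitely representable in $X$, while $X$ is trivially finitely representable in the Gurarij space; so $X$ is isometric to the Gurarij space by \cite[Theorem 2.19]{FLMT}. If $X$ has finite cotype but is non-reflexive, it is expected to be $L_1$, a case I would treat separately by more classical, $L_1$-specific arguments, since the envelope technology of Section 2 requires reflexivity. So assume from now on that $X$ is reflexive with finite cotype.

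Since \auh spaces are almost transitive, $X$ is then superreflexive with $X$ and $X^*$ uniformly convex \cite{CabelloSanchez}, so all of Section 2 applies: envelopes are $1$-complemented, the JdLG decomposition holds, isometric embeddings extend to envelopes, and $\isome(Y)\hookrightarrow\isome(\env(Y))$ topologically (Propositions \ref{ext} and \ref{extisometry}). If moreover $X$ is Fra\"iss\'e it contains an isometric copy of $\ell_2$ \cite[Proposition 2.13]{FLMT}; fix such a Hilbertian subspace $H\con X$ --- reducing the case where $X$ is \auh but not known to be Fra\"iss\'e to this setting being itself a sub-problem. The strategy is to convert the multidimensional transitivity of $\isome(X)$ into a concrete $L_p$-structure, using the rigidity encoded in the embedding $U(\ell_2)=\isome(H)\hookrightarrow\isome(\env(H))$ and, when $H$ is full, in the isometrically unique full quotient $X/H$.

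Concretely I would proceed in three steps. \emph{(i) The exponent.} Pin down a single candidate $p$, $1<p<\infty$, from local invariants of $X$ --- its type and cotype, or the power type of its modulus of uniform convexity --- the point being that \auh with finite cotype should force these invariants to coincide with those of an $L_p$; this is the local form of the $L_p$ dichotomy treated in the paper's final section. \emph{(ii) A dense directed family of $\ell_p^n$'s.} For ``unital-like'' configurations the minimal envelope $\env_\mr{min}$ should coincide with the range of a conditional expectation; since all the contractive projections onto minimal envelopes lie in $\overline{\mr{conv}(\isome(X))}^{\mr{WOT}}$ (Proposition \ref{ghfddddd}) they are mutually compatible, and one should be able to assemble an increasing net of finite dimensional $1$-complemented subspaces isometric to $\ell_p^n$'s whose union is dense in $X$, thereby producing an $L_p$-lattice structure on $X$. \emph{(iii) Identification.} Once the finite dimensional subspaces of $X$ coincide with the finitely generated sublattices of $L_p$ (a class with the required amalgamation property), uniqueness of the separable Fra\"iss\'e limit \cite[Proposition 2.22]{FLMT} gives $X\cong L_p$; the restriction $p\notin 2\N+4$ then follows automatically, since $L_p$ is not \auh for those exponents.

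The main obstacle is the passage from (i)--(ii) to (iii): extracting a genuine $L_p$-lattice --- equivalently, showing that an \auh space with the right local invariants has exactly the finite dimensional subspaces of $L_p$ --- is the hard, still-open heart of the conjecture. The envelope technology is meant to be the bridge, turning the abstract action of $\isome(X)$ into $1$-complementation and conditional-expectation data; but getting from ``$X$ has a rich family of $1$-complemented subspaces'' to ``$X$ is $L_p$'', without assuming at the outset either a lattice structure or an isometric embedding of $X$ into some $L_p$, is precisely where the difficulty concentrates.
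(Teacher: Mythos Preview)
This statement is a \emph{conjecture}, not a theorem: the paper states it as ``the main conjecture of \cite{FLMT}'' and provides no proof. You correctly recognize this and offer a plan of attack rather than a proof, which is the appropriate response.

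Your plan is broadly aligned with the paper's own perspective. The no-finite-cotype case reducing to the Gurarij space is exactly Proposition~\ref{several}(1). Your step~(i) --- pinning down a candidate exponent $p$ via local invariants --- is the content of Section~4, where type, cotype, and the complementation constants $c_2^n(X)$, $c_2(X)$ are related to a candidate $p$ (Proposition~\ref{several}(3) produces a unique $1<p\le 2$ with $c_2(X)=c_2(L_p)$). Your steps~(ii)--(iii), building a dense directed family of $\ell_p^n$'s via compatible conditional-expectation-like projections in $\overline{\mr{conv}(\isome(X))}^{\mr{WOT}}$ and then invoking uniqueness of the Fra\"iss\'e limit, are not carried out in the paper; the paper explicitly lists related open sub-questions (whether a Fra\"iss\'e space attains its type and cotype, whether $\fr(X)$ must be an interval $[p,2]$ or a singleton, whether every Fra\"iss\'e space contains a $1$-complemented copy of some $L_p$). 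Your honest identification of the obstacle --- passing from rich $1$-complementation data to an actual $L_p$-lattice structure --- is precisely where the paper also stops.

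One small caution: your sentence ``$c_0$ --- hence every finite dimensional space, hence the Gurarij space --- is finitely representable in $X$'' is slightly off; the paper (and \cite{MaPi}) uses finite representability of $\ell_\infty$, not $c_0$, to get universality, and then invokes that the Gurarij space is the unique separable Fra\"iss\'e space that is isometrically universal for separable spaces. Also, your step~(ii) presupposes a notion of ``unital-like configuration'' in an abstract \auh space, but the conditional-expectation interpretation of $\env_\mr{min}$ in the paper is established only inside $L_p$ (Theorem~\ref{62}); lifting that to a general $X$ is part of the open problem, not an available tool.
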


It was proved in \cite[Proposition 2.13.]{FLMT} that a Fra\"iss\'e space must contain an isometric copy of the Hilbert space. It remains open whether any \auh space is Fra\"iss\'e, or simply whether it contains an isometric copy of the Hilbert. But even in the \auh case, Hilbertian subspaces seem to be particularly relevant to the study of envelopes.
It should be noted that
we do not know whether every separable Fra\"iss\'e space contains a full copy of the Hilbert space. In Section 3 we shall give a positive answer in the case of the Fra\"iss\'e $L_p$-spaces. For more general Fra\"iss\'e spaces we have the following:

\begin{proposition} Consider $X$ an \auh  Banach space and $Y$ a full Hilbertian subspace of $X$.  Then $Y$ is a minimal full subspace of $X$ and a maximal Hilbertian subspace of $X$. 
\end{proposition}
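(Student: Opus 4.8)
The plan is to prove the two assertions separately, both by contradiction, using the extension machinery of Proposition~\ref{ext} and Proposition~\ref{extisometry} together with the fact (Example~\ref{hilbert}) that in a Hilbert space every closed subspace equals its own envelope.

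\textbf{Minimality as a full subspace.} Suppose $Z \subsetneq Y$ is a proper closed subspace of $X$ with $\env(Z) = X$. Since $Y$ is Hilbertian, so is $Z$, and inside the Hilbert space $Y$ we may write $Y = Z \oplus W$ with $W \ne \{0\}$ closed. Pick any surjective isometry $u$ on $Y$ fixing $Z$ pointwise but acting nontrivially on $W$ (for instance $u = \id_Z \oplus (-\id_W)$ when $\dim W < \infty$, or more generally any rotation of $Y$ supported on $W$); concretely $u \ne \id_Y$ while $u\rest Z = \id_Z$. Now $u \in \isome(Y) = \Emb(Y,X)\cap\{\text{onto }Y\}$, so by Proposition~\ref{extisometry}(1) applied with the subspace $Z$, the identity $\id_Z$ extends uniquely to a surjective isometry on $\env(Z) = X$; but both $\id_X$ and (the extension of) $u$ restrict to $\id_Z$ on $Z$ — here one must be slightly careful, since $u$ is defined on $Y$, not on $X$. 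The clean way: regard $u\rest Z = \id_Z$ as an element of $\Emb(Z,X)$; by Proposition~\ref{ext}(1) its unique extension to $\env(Z)=X$ is $\id_X$, hence $u$ itself, viewed as a partial isometry $Z \to X$ with the same action, must agree with $\id_X$ on all of $Y \subseteq \env(Z) = X$, forcing $u = \id_Y$, a contradiction. The main subtlety here is matching the ambient extension (a map on $X$) with the given map on $Y$; this is handled by noting $Y \subseteq X = \env(Z)$ and invoking uniqueness of the extension of $\id_Z \in \Emb(Z,X)$.

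\textbf{Maximality as a Hilbertian subspace.} Suppose $Y \subsetneq Y'$ with $Y'$ a Hilbertian (closed) subspace of $X$ and $Y' \ne Y$. By Example~\ref{hilbert} applied inside the Hilbert space $Y'$, the envelope of $Y$ \emph{computed within $Y'$} equals $Y$. But envelopes can only grow in the ambient space: since $Y'$ is itself a subspace of $X$ on which a rich supply of partial isometries is available, I want to conclude $\env(Y) \supseteq$ something strictly bigger than $Y$, contradicting $\env(Y) = X \supseteq Y'$ being consistent only if\ldots{} — wait, that is not yet a contradiction. The correct contradiction: apply Proposition~\ref{extisometry}(1) with the subspace $Y$. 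Any $t \in \isome(Y)$ extends uniquely to $\tilde t \in \isome(\env(Y)) = \isome(X)$. Take $t$ a surjective isometry of the Hilbert space $Y'$ that fixes $Y$ pointwise but moves some $y_0 \in Y' \setminus Y$ (possible since $Y \subsetneq Y'$ and $Y'$ is Hilbertian: split $Y' = Y \oplus V$, $V \ne 0$, and rotate $V$). Its restriction $t\rest Y = \id_Y \in \isome(Y)$ extends uniquely to an isometry of $X$, which must be $\id_X$. On the other hand $t$ itself, as an element of $\Emb(Y,X)$ equal to $\id_Y$, extends (Proposition~\ref{ext}(1)) uniquely to $\id$ on $\env(Y) = X$; since $Y' \subseteq X$, this extension restricted to $Y'$ is $\id_{Y'}$, yet it also must agree with $t$ on $Y'$ by the way $t$ was built as a genuine isometry of $Y' \subseteq X$ fixing $Y$ — but here again one needs $t$ to be a \emph{strong} partial isometry of $X$, which follows since $X$ is \auh and $t$ is an isometry between the subspaces $Y'$ and $Y'$ of $X$. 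Thus $t = \id_{Y'}$, contradicting $t(y_0) \ne y_0$.

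\textbf{Main obstacle.} The genuine difficulty in both parts is the bookkeeping needed to transfer a given isometry of a Hilbertian \emph{subspace} ($Y$ or $Y'$) into the ambient extension framework, which speaks about maps on $X$ and on $\env(Y)$: one must invoke that in an \auh space every partial isometry is a strong partial isometry (implicit in $\Emb_\mr{s} = \Emb$ for finite-dimensional domains, extended to separable domains by Proposition~\ref{ext}), so that Proposition~\ref{ext}(1) and Proposition~\ref{extisometry} genuinely apply, and then use uniqueness of extensions to force the contradictory map to be the identity. Once that dictionary is set up, both minimality and maximality fall out immediately from $\env(Y) = X$ combined with the rigidity of Hilbert space (Example~\ref{hilbert}), and no quantitative estimates are required.
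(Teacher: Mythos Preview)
Your argument is correct and, for minimality, is exactly the paper's: exhibit two distinct elements of $\Emb(Y,X)$ extending $i_{Z,X}$ (the inclusion $i_{Y,X}$ and $i_{Y,X}\circ u$ for a nontrivial unitary $u$ of $Y$ fixing $Z$), extend both to elements of $\Emb(X,X)$ using fullness of $Y$ and Proposition~\ref{ext}(1), and contradict the uniqueness there applied to $Z$. Your hedging about ``matching the ambient extension'' is precisely the step of first extending from $Y$ to $X$ via $\env(Y)=X$; once you state that explicitly there is no subtlety left.

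For maximality the paper takes a shorter route: instead of rerunning the extension argument with $Y\subsetneq Y'$, it simply observes that any Hilbertian $Y'\supsetneq Y$ is automatically full (since $\env$ is monotone and $\env(Y)=X$), so the minimality statement \emph{just proved, applied to $Y'$ in place of $Y$}, says no proper subspace of $Y'$ is full---but $Y$ is. Your direct argument is valid and unfolds to the same contradiction, but this packaging avoids repeating the uniqueness machinery and makes the logical dependence between the two assertions transparent.
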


\begin{proof}
Let $Y$ be some full Hilbertian subspace of $X$. Note that no proper subspace $Z$ of $Y$ has full envelope: Indeed the identity embedding $i_{Z,X}$ of $Z$ into $X$ admits several extensions as an isometric embedding of $Y$ into $X$ and therefore several extensions as an isometric embedding of $X$ into $X$; while the unique isometric embedding of $\env(Z)$ extending $i_{Z,X}$ is $i_{\env(Z),X}$ by Proposition \ref{ext} {\rm (1)}.
On the other hand all superspaces of $Y$ have full envelope. Therefore $Y$ is minimal with full envelope.

This also means that $Y$ is maximal Hilbertian inside $X$. Indeed a non-trivial Hilbertian extension of $Y$ would have full envelope and would not be a minimal full subspace, contradicting the first assertion of the proposition.
\end{proof}

\section{Envelopes in rearrangement invariant spaces and $L_p$-spaces} 

In this section we identify the envelope of  subspaces of $L_p, 1 \leq p <+\infty, p \neq 2,4,6,\ldots$. We start with facts valid for general reflexive spaces, and then for r.i. spaces  on $[0,1]$ (also called symmetric by Peller \cite{Peller}). 
We start with the following result that allows to compute the Korovkin envelope.

\begin{proposition}\label{union} Assume $X$ is reflexive LUR. Let $S$ be a semigroup of contractions on $X$.  Let $Y \subseteq X$ be the closure of a directed sequence $(Y_{i})_{i\in I}$ of subspaces of $X$. Then
$$\env_{S}(Y)=\overline{\bigcup_{i\in I} \env_S(Y_i)}.$$
\end{proposition}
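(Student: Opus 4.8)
The plan is to prove the two inclusions separately, using the characterization of the Korovkin envelope as an algebraic envelope of the WOT-closure (Proposition \ref{WOT}) together with the $1$-complementation and JdLG structure theory from Proposition \ref{JdLG-kor}. Write $Z:=\overline{\bigcup_{i\in I}\env_S(Y_i)}$.

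First I would prove $Z\subseteq \env_S(Y)$. Since $I$ is directed and each $Y_i\subseteq Y$, monotonicity of the envelope map gives $\env_S(Y_i)\subseteq \env_S(Y)$ for every $i$; as $\env_S(Y)$ is a closed subspace, it contains the closed linear span $Z$. (This direction does not even use LUR or reflexivity, only that $\env_S$ is an envelope map.)

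The substantive direction is $\env_S(Y)\subseteq Z$. Here I would use Proposition \ref{JdLG-kor}(1)(ii): under the hypotheses (reflexive, LUR, $S$ a semigroup of contractions), $\env_S(Y)$ is the \emph{smallest} superspace of $Y$ that is complemented by a projection in $\overline{\mathrm{conv}(S)}^{\mathrm{WOT}}$; likewise each $\env_S(Y_i)$ is the smallest superspace of $Y_i$ complemented by such a projection. So it suffices to show that $Z$ is itself complemented by a projection in $\overline{\mathrm{conv}(S)}^{\mathrm{WOT}}$ acting as the identity on $Y$ — then minimality of $\env_S(Y)$ forces $\env_S(Y)\subseteq Z$ (one still needs $Y\subseteq Z$, which is clear since $Y_i\subseteq \env_S(Y_i)\subseteq Z$ and $Y=\overline{\bigcup_i Y_i}$). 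To build the projection onto $Z$, for each $i$ let $p_i\in\overline{\mathrm{conv}(S)}^{\mathrm{WOT}}$ be a contractive projection onto $\env_S(Y_i)$. By reflexivity the WOT-closure of the contractions is compact, so the net $(p_i)_{i\in I}$ (indexed by the directed set $I$) has a WOT-cluster point $p$, and $p\in\overline{\mathrm{conv}(S)}^{\mathrm{WOT}}$ since that set is WOT-closed and convex. The key points to verify are then: (a) $p$ is a projection — this follows because for a fixed $i_0$ and any $i\ge i_0$ one has $\env_S(Y_{i_0})\subseteq\env_S(Y_i)$, hence $p_i$ acts as the identity on $\env_S(Y_{i_0})$; passing to the cluster point, $p$ is the identity on $\overline{\bigcup_i \env_S(Y_i)}=Z$, so in particular $p|_Z=\mathrm{id}_Z$; (b) the range of $p$ is contained in $Z$ — here I would use the remark following Proposition \ref{convG} (Remark \ref{remrem}), or argue directly: any contractive projection $q$ in $\overline{\mathrm{conv}(S)}^{\mathrm{WOT}}$ that fixes $Y_i$ pointwise has range containing $\env_S(Y_i)$, and dually one wants its range to be controlled; more cleanly, since $p_i$ has range exactly $\env_S(Y_i)\subseteq Z$ and $Z$ is weakly closed, for $x\in X$ the net $p_i x\in Z$ has its WOT-cluster value $px$ in $Z$. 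Combining (a) and (b), $p$ is a contractive projection onto $Z$ lying in $\overline{\mathrm{conv}(S)}^{\mathrm{WOT}}$ and restricting to the identity on $Y$.

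I expect the main obstacle to be the bookkeeping in step (b): one must be a little careful that $p$ is genuinely \emph{onto} $Z$ and not just into it, i.e.\ that $p$ restricted to $Z$ is the identity (covered by (a)) \emph{and} $p(X)\subseteq Z$. The directedness of $(Y_i)$ is exactly what makes $\bigcup_i \env_S(Y_i)$ a directed union of subspaces (using $Y_i,Y_j\subseteq Y_k$ for some $k$, hence $\env_S(Y_i),\env_S(Y_j)\subseteq\env_S(Y_k)$), so that $Z$ is a closed subspace and the identity-on-$Z$ argument in (a) goes through; without directedness the union need not be a subspace. A secondary technical point is that the net of projections is indexed by $I$ itself, so ``cluster point'' rather than ``limit'' is the right notion, and one should pass to a subnet only once, simultaneously ensuring WOT-convergence; reflexivity makes this painless. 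Once $p$ is produced, minimality from Proposition \ref{JdLG-kor}(1)(ii) closes the argument.
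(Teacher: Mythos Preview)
Your proof is correct and follows essentially the same approach as the paper: take contractive projections $p_i\in\overline{\mathrm{conv}(S)}^{\mathrm{WOT}}$ onto $\env_S(Y_i)$ from Proposition~\ref{JdLG-kor}, pass to a WOT-cluster point $p$, verify that $p$ is a contractive projection onto $Z$, and conclude via minimality (the paper phrases the last step through Remark~\ref{remrem} rather than Proposition~\ref{JdLG-kor}(1)(ii), but these are the same content). Your verification of (a) and (b) is more explicit than the paper's, which simply asserts that $p$ is a projection onto $Z$; your care about directedness and passing to a single convergent subnet is exactly what is needed to make those two points go through.
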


\begin{proof}  Only the direct inclusion is not trivial. For each $i\in I$ let $p_i$  be a  contractive projection  $p_i \in \overline{\mr{conv}(S)}^\mr{WOT}$ onto
 $\env_S(Y_i)$ provided by Proposition \ref{JdLG-kor}. 
Let $p$ be a WOT-cluster point of $\{p_i\}_{i\in I}$, and note that $p \in \overline{\mr{conv}(S)}^\mr{WOT}$ and that $p$ is a contractive projection onto $\overline{\bigcup_{i\in I} \env_S(Y_i)}$. In particular, $p\rest Y=\id_Y$. Therefore by Remark \ref{remrem}, the range of $p$ contains $\env_S(Y)$. 
\end{proof}

\begin{remark}
It does not seem that the previous proposition holds for the algebraic envelope: 
 Assume now that $Y \subseteq X$ is the closure of a directed sequence $(Y_{i})_{i\in I}$ of subspaces of $X$. It is clear that 
$\env_{S}^{\rm alg}(Y) \supseteq \overline{\bigcup_{i\in I} \env_S^{\rm alg}(Y_i)}$. As for the direct inclusion, assume that
$d(x, \overline{\bigcup_{i\in I} \env_S^{\rm alg}(Y_i)})>\epsilon$.  We use the following claim.
\clam
Assume  $X$ 
is reflexive strictly convex. Let $S$ be a semigroup of contractions on $X$ and let $Y \subseteq X$. If $d(x, \env_S^\mr{alg}(Y))>\epsilon$, then there exists $s \in S$ such that $s\rest{Y}=\id_Y$ and $d(x,sx)>\epsilon$.
\fclam
To prove this,  by Proposition \ref{JdLG-alg}, let  $p$ be a contractive projection on 
$\env_S^\mr{alg}(Y)$ belonging to the set
$\overline{{\rm conv}({\rm Stab}_S(Y))}^{\rm WOT}$.
Note that for such $x$ one has that  $\|x-px\| >\epsilon$; therefore $\phi(x-px)>\epsilon$ for some norm $1$ functional $\phi$. Since $p \in \overline{\mr{conv}({\rm Stab}_S(Y))}^\mr{WOT}$, there is some $T \in \mr{conv}({\rm Stab}_S(Y))$ such that $\phi(x-Tx)>\epsilon$ and therefore some $s \in {\rm Stab}_S(Y)$ such that $\phi(x-sx)>\epsilon$.

Once this is established, we can get  $(s_i)_{i \in I}$ with $
s_i$ acting as the identity on $Y$ and $d(x,s_i x)>\epsilon$. Under appropriate hypotheses one would obtain a WOT-cluster point $T$ of  $(s_i)_i$ with
$T\rest{Y}=\id_Y$ and
$d(x,Tx)>\epsilon$. If $T$ were an element of $S$, this would prove that $x$ is not in $\env_{S}^{\rm alg}(Y)$ ; but $T$ only belongs to
$\overline{S}^{\rm WOT}$ a priori. This perhaps explains why the result in the desired equality was peoved for the Korovkin envelope 
In the case of $X=L_p$, $p\neq 2$, we know the following. Suppose that  $Y=L_p([0,1],\Sig)$, and $(\Sig_n)_n$ is an increasing sequence of finite subalgebras whose union is dense in $\Sig$. Then  $\env^\mr{alg}(Y)=\overline{\bigcup_{n}\env^\mr{alg}(L_p([0,1],\Sig_n))}$ only holds when $\Sig$ is a fixed-point subalgebra (see Remark \ref{lknjkio855}), because $\env^\mr{alg}(L_p([0,1],\Sig_n))=L_p([0,1],\Sig_n)$ (Lemma \ref{i8942323}) and consequently the required equality means that $\env^\mr{alg}(Y)=Y$.

\end{remark}

Let us note, using Lancien's theorem about the existence of LUR  renormings preserving the isometry group in separable reflexive Banach spaces \cite{Lancien}, we have the following.

\begin{corollary}\label{unionenveloppe}
Assume $X$ is reflexive and either separable or LUR. Let $Y \subseteq X$ be the closure of a directed sequence $(Y_{i})_{i\in I}$ of subspaces of $X$.  Then
$$\env(Y)=\overline{\bigcup_{i\in I} \env(Y_i)}.$$
\end{corollary}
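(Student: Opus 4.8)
The plan is to reduce Corollary~\ref{unionenveloppe} to Proposition~\ref{union}, which already establishes the identity $\env_S(Y)=\overline{\bigcup_{i\in I}\env_S(Y_i)}$ whenever $X$ is reflexive and LUR and $S$ is a semigroup of contractions. There are two cases according to the hypothesis on $X$. If $X$ is already LUR, then apply Proposition~\ref{union} directly with $S=\isome(X)$, which is a group (hence semigroup) of isometries, in particular of contractions; since $\env_{\isome(X)}(\cdot)=\env(\cdot)$ by definition, this is exactly the desired equality, with no further work needed.

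The substantive case is when $X$ is separable reflexive but not assumed LUR. Here I would invoke Lancien's theorem \cite{Lancien}: there is an equivalent norm $\vertiii{\cdot}$ on $X$ which is LUR and for which $\isome(X,\nrm{\cdot})\subseteq \isome(X,\vertiii{\cdot})$. The key observation is that the envelope is a purely order-theoretic/topological notion built from the isometry group and the norm topology --- and the norm topology is unchanged under passage to an equivalent norm. Concretely, the original isometry group $G:=\isome(X,\nrm{\cdot})$ is a bounded group of isomorphisms of $(X,\vertiii{\cdot})$ (bounded because $\vertiii{\cdot}$ and $\nrm{\cdot}$ are equivalent, and each $T\in G$ is a $\nrm{\cdot}$-isometry). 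So one can speak of $\env_G(Y)$ computed inside $(X,\vertiii{\cdot})$, and because the two norms induce the same topology, a net $(T_i x)_i$ converges to $x$ in $\nrm{\cdot}$ iff it does in $\vertiii{\cdot}$; hence $\env_G(Y)$ is the same subspace whether computed with $\nrm{\cdot}$ or $\vertiii{\cdot}$, and it equals the isometric envelope $\env(Y)$ relative to the original norm.

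It then remains to show $\env_G(Y)=\overline{\bigcup_{i\in I}\env_G(Y_i)}$ when $(X,\vertiii{\cdot})$ is reflexive LUR and $G$ is a bounded group of isomorphisms (not necessarily isometries of $\vertiii{\cdot}$). The cleanest route is to re-run the proof of Proposition~\ref{union} verbatim, invoking Proposition~\ref{JdLG-kor}(1) in case ``(i) $S$ is a group of isometries'' --- but $G$ consists of $\nrm{\cdot}$-isometries, not $\vertiii{\cdot}$-isometries. To handle this I would note that Proposition~\ref{uhiu4iur4rt4} already gives, for $X$ separable reflexive and $G$ a subgroup of $\isome(X,\nrm{\cdot})$, a contractive projection $p_i\in\overline{\mr{conv}(G)}^{\mr{WOT}}$ onto $\env_G(Y_i)$ for each $i$ (where ``contractive'' and ``$\env_G$'' may be taken with respect to the LUR renorming, since Proposition~\ref{uhiu4iur4rt4} renorms internally anyway). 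Taking a WOT-cluster point $p$ of $(p_i)_{i\in I}$, one checks $p\in\overline{\mr{conv}(G)}^{\mr{WOT}}$, that $p$ is a contractive (for the LUR norm) projection onto $\overline{\bigcup_i\env_G(Y_i)}$, so in particular $p\rest Y=\id_Y$; then Remark~\ref{remrem} --- applied with the LUR norm and the semigroup $G$ --- forces $\env_G(Y)$ into the range of $p$, giving the direct inclusion. The reverse inclusion is monotonicity of the envelope map together with closedness.

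The main obstacle, and the only point requiring care, is the bookkeeping around \emph{which norm} each object lives with respect to: the isometries are isometries of the original norm, the LUR structure (needed for Remark~\ref{remrem} and Proposition~\ref{JdLG-kor}) is that of Lancien's renorming, and the envelope must be seen to be insensitive to this change. Once one is careful that ``$\env_G$'' and ``norm convergence'' are renorming-invariant while ``contractive projection in $\overline{\mr{conv}(G)}^{\mr{WOT}}$'' is meant in the LUR norm, every ingredient is already available: Proposition~\ref{union} for the LUR case, Proposition~\ref{uhiu4iur4rt4} (itself built on Lancien) for the projections in the separable case, and Remark~\ref{remrem} for the final squeeze. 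No genuinely new estimate is needed.
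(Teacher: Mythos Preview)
Your argument is correct and follows the paper's approach: the LUR case is Proposition~\ref{union} with $S=\isome(X)$, and the separable case goes through Lancien's renorming. However, you take an unnecessary detour in the separable case. You correctly record that Lancien's norm (call it $\|\cdot\|'$) satisfies $\isome(X,\nrm{\cdot})\subseteq \isome(X,\|\cdot\|')$, but then contradict yourself by writing ``$G$ consists of $\nrm{\cdot}$-isometries, not $\|\cdot\|'$-isometries.'' The inclusion you stated means precisely that $G$ \emph{is} a group of $\|\cdot\|'$-isometries, hence a semigroup of $\|\cdot\|'$-contractions, so Proposition~\ref{union} applies directly to $(X,\|\cdot\|')$ with $S=G$. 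Combined with your (correct) observation that $\env_G$ depends only on the norm topology and is therefore renorming-invariant, this already yields $\env(Y)=\env_G(Y)=\overline{\bigcup_i \env_G(Y_i)}=\overline{\bigcup_i \env(Y_i)}$.

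The longer route you sketch --- via Proposition~\ref{uhiu4iur4rt4} to produce the projections $p_i$, then a WOT-cluster point $p$, then Remark~\ref{remrem} --- is not wrong (and indeed the projections land in $\overline{\conv(G)}^{\rm WOT}$, hence are automatically $\|\cdot\|'$-contractions since $G$ consists of $\|\cdot\|'$-isometries; your parenthetical justification ``since Proposition~\ref{uhiu4iur4rt4} renorms internally anyway'' is not the right reason). But all of this is superfluous once the self-contradiction is removed.
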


 We do not know if the above is true for the algebraic envelope.

\subsection{Envelopes in rearrangement invariant spaces}

\

In this subsection we consider rearrangement invariant (r.i.) spaces on $[0,1]$ as defined in \cite[Definition 2.a.1.]{LT2}.  It is well-known that reflexive r.i. spaces on $[0,1]$ are separable (see for example \cite[pp 118-119]{LT2}).

\begin{definition} For $X$ a r.i. space on $I=[0,1]$, and every $\sigma$-subalgebra $\Sigma$ of measurable subsets of $I$, let $X_\Sig$ be the subspace of $X$ consisting of the $\Sigma$-measurable functions in $X$.  \end{definition}

Note that $X_\Sig$ is a unital space.

\begin{lemma}\label{i8942323} If $X$ is a r.i. space on $[0,1]$, and $Y$ is a finite dimensional unital sublattice, then 
$\env^\mr{alg}(Y)=Y$.
\end{lemma}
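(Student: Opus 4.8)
The plan is to show that a finite-dimensional unital sublattice $Y$ of a r.i. space $X$ on $[0,1]$ is its own algebraic envelope by exhibiting, for every $x \notin Y$, an isometry $T$ of $X$ fixing $Y$ pointwise with $Tx \neq x$, which by definition of $\env^\mr{alg}(Y) = \mr{Fix}(\mr{Stab}_{\isome(X)}(Y))$ shows $x \notin \env^\mr{alg}(Y)$.

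First I would use the lattice structure. Since $Y$ is a finite-dimensional sublattice containing the constant function $\mathbbm{1}$, it is lattice-isometric to a finite-dimensional $L_\infty$-type space: concretely, $Y$ is spanned by the indicator functions $\mathbbm{1}_{A_1},\dots,\mathbbm{1}_{A_n}$ of a finite measurable partition $\{A_1,\dots,A_n\}$ of $[0,1]$, i.e. $Y = X_{\Sigma_0}$ where $\Sigma_0$ is the finite algebra generated by the $A_j$. (This is the standard structure theory of finite-dimensional sublattices of function spaces; the unital hypothesis guarantees the partition covers all of $[0,1]$.) Now given $x \in X \setminus Y$, the function $x$ is not $\Sigma_0$-measurable, so there is some atom $A_j$ on which $x$ is not (a.e.) constant. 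I would then produce a measure-preserving transformation $\varphi$ of $[0,1]$ which is the identity off $A_j$ and which permutes $A_j$ nontrivially enough that $x \circ \varphi \neq x$ on a set of positive measure; for instance, split $A_j$ into two pieces of equal measure on which $x$ has different ``average behaviour'' (possible since $x$ is nonconstant there) and swap them by a measure-preserving bijection, extending by the identity elsewhere. The composition operator $T : f \mapsto f \circ \varphi$ is then a surjective isometry of $X$ (r.i. norms are invariant under measure-preserving automorphisms of $[0,1]$), it fixes every $\Sigma_0$-measurable function since $\varphi$ fixes $\Sigma_0$ setwise and is identity off $A_j$, hence $T \in \mr{Stab}_{\isome(X)}(Y)$, yet $Tx = x\circ\varphi \neq x$.

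The step I expect to be the main (though still routine) obstacle is the selection of the pieces of $A_j$ and the measure-preserving swap witnessing $x\circ\varphi \neq x$: one must be a little careful that the two halves of $A_j$ can be chosen so that the swapped function genuinely differs from $x$ in $X$-norm, not merely as abstract functions. This is handled by a standard argument: if $x$ is nonconstant a.e. on $A_j$, pick a rational $r$ with both $\{t \in A_j : x(t) < r\}$ and $\{t \in A_j : x(t) \geq r\}$ of positive measure, and further subdivide to obtain two subsets $B, C \subseteq A_j$ of equal positive measure with $\mathrm{ess\,sup}_B\, x < \mathrm{ess\,inf}_C\, x$; a measure-preserving bijection $B \leftrightarrow C$ (identity elsewhere) then moves $x$ off itself on $B \cup C$, which has positive measure, so $x \circ \varphi \neq x$ and in particular $Tx \neq x$. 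This completes the proof that $\env^\mr{alg}(Y) \subseteq Y$; the reverse inclusion $Y \subseteq \env^\mr{alg}(Y)$ is immediate from property (a) of envelope maps.

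As an alternative packaging, one can avoid building the partition explicitly and instead note that a concrete contractive projection onto $Y$ (the conditional expectation $\mathbb{E}[\cdot \mid \Sigma_0]$, which is contractive on every r.i. space) lies in $\overline{\mathrm{conv}}(\isome(X))$ by averaging over the composition isometries associated to measure-preserving automorphisms fixing $\Sigma_0$; but for the \emph{algebraic} envelope the direct witnessing argument above is cleaner and does not require the convexity/closure manipulations, so that is the route I would follow.
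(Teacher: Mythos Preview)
Your proposal is correct and follows essentially the same route as the paper: identify $Y$ with the span of the indicators of a finite measurable partition $\{A_1,\dots,A_n\}$ of $[0,1]$, then for $x\notin Y$ locate an atom $A_j$ on which $x$ is not a.e.\ constant, choose disjoint subsets $B,C\subseteq A_j$ of equal positive measure separated by a level of $x$, and use the measure-preserving swap $B\leftrightarrow C$ (identity elsewhere) to exhibit an isometry in $\mr{Stab}_{\isome(X)}(Y)$ that moves $x$. The paper's argument differs only in cosmetic details (it allows $\sigma$ to be any measure automorphism with $\sigma(B)=C$ and $\sigma(A_j)=A_j$ rather than insisting on the identity off $B\cup C$).
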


\begin{proof} 
The sublattice $Y$ is generated by the characteristic functions of $n$ disjoint measurable subsets $(A_i)$ partitioning $[0,1]$.
 Let $f$ be such  that $T(f)=f$ for all $T \in {\rm Stab}_{\isome(X)}(Y)$. 
 We claim that $f\rest A_i$ is constant for each $i$, and consequently, $f\in Y$. We prove the claim by contradiction.  We can find $c \in \R$, and $B, C$ of positive (equal) measure in some $A_i$ such that $f \geq c$  on $B$ and
 $f<c$ on $C$. Let $\sig$ be a measure isomorphism such that $\sig(B)=C$, and $\sig(A_j)=A_j$ for all $j=1,\dots, n$. Observe that the isometry $T_\sig$ determined by $\sig$ belongs to ${\rm Stab}_{\isome(X)}(Y)$, but $T_\sig(f)\neq f$, because $T_\sig(f)\rest C$ is a function with value $\ge c$ while $f\rest C$ has always value $<c$. This is impossible since we are assuming that $f\in \mr{Env}^\mr{alg}(Y)$.
%
\end{proof}

\begin{remark}\label{lknjkio855}
Recall that a $\sig$-subalgebra $\mc B$ of a measure algebra $(\mc A,\mu)$ is called  a {\em fixed-point subalgebra} when there is a set $\Ga$ of measure-preserving isomorphisms of $\mc A$ such that $\mc B= \conj{a\in A}{\pi a=a \text{ for every $\pi\in \Ga$}}$ (see \cite[Section 333]{Frem3}).  This subalgebras are characterized in  \cite[Theorem 333R]{Frem3}, where in particular is shown that fixed-point subalgebras can be determined by a single measure-preserving isomorphism of $(\mc A,\mu)$. It is straightforward to see that 
A unital sublattice $Y$ of $L_p[0,1]$, $p\neq 2$, satisfies that $\env^\mr{alg}(Y)=Y$ exactly when $Y=L_p([0,1],\Sig)$ for some fixed-point  subalgebra of the Borel algebra $\mc B([0,1])$. Moreover, we will see in Theorem  \ref{62} that if $Y\con L_p$, $p\neq 2$, is a unital subspace, then $\env_\mr{min}(Y)=\env(Y)=L_p([0,1],\Sig_Y)$, and    $\env^\mr{alg}(Y)= L_p([0,1],\Sig_Y)$ iff $\Sig_Y$ is a fixed-point subalgebra.
\end{remark}

Note that by   \cite[Theorem 5.25]{Beatasurvey}, this is a $1$-complemented subspace.

\begin{proposition}\label{oi35dsfewfwerwe}  If $X$ is a reflexive  r.i. space on $I=[0,1]$, then for any $\sig$-subalgebra $\Sig$ one has that $\env(X_\Sig)=X_\Sig$. 
\end{proposition}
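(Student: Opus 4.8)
The plan is to show that $X_\Sig$ is a range of a contractive projection lying in $\overline{\mathrm{conv}(\isome(X))}^{\mathrm{WOT}}$, and then invoke the characterization of envelopes (Remark following Proposition \ref{ghfddddd}) together with Corollary \ref{unionenveloppe} to identify $\env(X_\Sig)$ with $X_\Sig$. Recall that reflexive r.i. spaces on $[0,1]$ are separable, so all the machinery of Section 2 for separable reflexive spaces applies; in particular, by Lancien's renorming we may assume $X$ is LUR (and its dual strictly convex) without changing the isometry group, so $\env$ is computed inside a space satisfying the hypotheses of Proposition \ref{ghfddddd}.

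First I would handle the case where $\Sig$ is a \emph{finite} subalgebra, say generated by a measurable partition $[0,1]=\bigsqcup_{i=1}^n A_i$. Then $X_\Sig$ is the finite-dimensional unital sublattice spanned by $\mathbbm 1_{A_1},\dots,\mathbbm 1_{A_n}$, and the conditional expectation $E_\Sig$ onto $X_\Sig$ is a contractive projection (this is the classical fact, recalled via \cite[Theorem 5.25]{Beatasurvey}, that conditional expectations are contractive on r.i. spaces). The key point is that $E_\Sig$ belongs to $\overline{\mathrm{conv}(\isome(X))}^{\mathrm{WOT}}$: this follows by averaging over the group of isometries $T_\sigma$ induced by measure-preserving automorphisms $\sigma$ of $[0,1]$ that stabilize each $A_i$ (those $\sigma$ with $\sigma(A_i)=A_i$), exactly as in the proof of Proposition \ref{inter} / Remark \ref{futureuse} — the stabilizer $\stab_{\isome(X)}(X_\Sig)$ contains all such $T_\sigma$, the semigroup they generate has $X_\Sig$ as its fixed space (the argument of Lemma \ref{i8942323} shows $\mathrm{Fix}$ of this family is exactly $X_\Sig$), and the mean ergodic / JdLG projection onto that fixed space is $E_\Sig$ and lies in $\overline{\mathrm{conv}(\isome(X))}^{\mathrm{WOT}}$ by Proposition \ref{JdLG-kor}(1)(ii). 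Hence $X_\Sig$ is an isometric envelope, and since $X_\Sig \subseteq \env(X_\Sig)$ always while $\env(X_\Sig)$ is the smallest superspace complemented by such a projection (Proposition \ref{ghfddddd}(2)), we get $\env(X_\Sig)=X_\Sig$ in the finite case.

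For a general $\sigma$-subalgebra $\Sig$, I would write $\Sig$ as generated (up to null sets) by an increasing sequence $(\Sig_n)_n$ of finite subalgebras — possible because $X$ is separable, so $X_\Sig \cong L$-something is a separable r.i. space and $\Sig$ is countably generated modulo null sets. Then $X_\Sig$ is the closure of the directed union $\bigcup_n X_{\Sig_n}$, and Corollary \ref{unionenveloppe} gives
$$\env(X_\Sig)=\overline{\bigcup_n \env(X_{\Sig_n})}=\overline{\bigcup_n X_{\Sig_n}}=X_\Sig,$$
using the finite case for each $n$. This completes the proof.

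The main obstacle I anticipate is the finite-dimensional step, specifically verifying cleanly that the conditional expectation $E_\Sig$ lies in the WOT-closed convex hull of the isometry group — one must be a little careful that the measure-preserving automorphisms fixing each $A_i$ generate, after the ergodic averaging of Proposition \ref{JdLG-kor}, precisely the projection onto their common fixed subspace and that this fixed subspace is $X_\Sig$ and not something larger; the latter is exactly the content of the argument in Lemma \ref{i8942323} (any function fixed by all such isometries is constant on each $A_i$), so the pieces are all in place, but the bookkeeping tying the JdLG projection to $E_\Sig$ via uniqueness of contractive projections (using strict convexity of $X^*$ after the Lancien renorming) is the delicate part. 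An alternative, perhaps cleaner, route for the finite case is to observe directly that $X_{\Sig_n}$ is a finite-dimensional unital sublattice, hence $1$-complemented by $E_{\Sig_n}$, and that any net of isometries converging pointwise to the identity on $X_{\Sig_n}$ must, by the rigidity argument of Lemma \ref{i8942323} applied to each basis vector $\mathbbm 1_{A_i}$ together with LUR, converge pointwise to the identity on all of $X_{\Sig_n}$ — but in fact this is automatic since $X_{\Sig_n}$ is finite-dimensional, so what actually needs proving is that a net converging to $\id$ on $X_{\Sig_n}$ converges on every $\Sig_n$-measurable function, which it does trivially, and the real content is showing nothing outside $X_{\Sig_n}$ is forced to converge; this is where one genuinely needs the projection-in-the-hull characterization rather than a soft argument.
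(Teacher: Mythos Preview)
Your overall architecture matches the paper's: reduce to finite subalgebras and then pass to the limit via Corollary \ref{unionenveloppe}. The paper handles the limiting step exactly as you do.

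Where you diverge is in the finite case, and there you are working much harder than necessary. You invoke the projection-in-the-hull characterization (Proposition \ref{ghfddddd}/Remark \ref{remrem}), which forces you through a Lancien renorming, the JdLG machinery, and an identification of the mean-ergodic projection with the conditional expectation. The paper bypasses all of this with a one-line observation: right after Definition \ref{eq} it is noted that
\[
\env_S(Y)\subseteq \env_{\overline{S}^{\mathrm{SOT}}}^{\mathrm{alg}}(Y)\subseteq \env_S^{\mathrm{alg}}(Y),
\]
so in particular $\env(Y)\subseteq \env^{\mathrm{alg}}(Y)$ always. Lemma \ref{i8942323} gives $\env^{\mathrm{alg}}(X_{\Delta_n})=X_{\Delta_n}$ directly, hence $X_{\Delta_n}\subseteq \env(X_{\Delta_n})\subseteq \env^{\mathrm{alg}}(X_{\Delta_n})=X_{\Delta_n}$, and the finite case is done. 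No LUR, no conditional expectation, no convex hull.

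Your final paragraph actually brushes against this route and then rejects it, asserting that ``one genuinely needs the projection-in-the-hull characterization rather than a soft argument.'' That is the one substantive misstep: the soft argument is precisely what works. If $x\notin X_{\Delta_n}$, Lemma \ref{i8942323} produces a single isometry $T\in\stab_{\isome(X)}(X_{\Delta_n})$ with $Tx\neq x$; the constant net $T_i\equiv T$ converges pointwise to $\id$ on $X_{\Delta_n}$ but not at $x$, so $x\notin\env(X_{\Delta_n})$. Nothing about projections is needed to exclude points outside $X_{\Delta_n}$.

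Your longer route is not wrong (the Lancien renorming does not disturb $\env_G$ for $G=\isome(X,\nrm{\cdot}_{\mathrm{orig}})$ since convergence of nets is unchanged under equivalent norms, and Remark \ref{remrem} then applies with $S=G$ in the LUR renorming), but it carries unnecessary bookkeeping and the hazard you yourself flag about tying the JdLG projection to $E_\Sigma$. The paper's proof avoids all of it.
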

\begin{proof}  Each $\sig$-algebra $\Sig$ is separable, hence we can find an increasing sequence $(\De_n)_n$ of finite subalgebras of $\Sig$ whose union is dense (see \cite{Ha}), and consequently $\bigcup_nX_{\De_n}$ is dense in $X_\Sig$, because $X$ is a r.i. space.  Since each $X_{\De_n}$ is a finite dimensional unital sublattice, it follows from the previous lemma that $\env^\mr{alg}(X_{\De_n})=X_{\De_n}=\env(X_{\De_n})$.  We know that $X$ is separable, 
so by Corollary \ref{unionenveloppe}  we deduce that $\env(X_{\Sig})=\overline{\bigcup_n \env(X_{\De_n} )}=\overline{\bigcup_n X_{\De_n}}=X_{\Sig}$.
\end{proof}
Note that holds: $X_{\cap_i \Sigma_i}=\bigcap_i X_{\Sigma_i}$. Therefore we may define the following envelope.

\begin{definition}[Conditional envelope] Suppose that $X$ is a r.i. space on $I=[0,1]$. For $Y \subseteq X$,  let $\Sigma_Y$ be the smallest $\sigma$-algebra making all functions of $Y$  measurable. The conditional envelope $\env_\mr{c}(Y)$ is the space $X_{\Sigma_Y}$ of $\Sigma_Y$-measurable functions of $X$.   
\end{definition}
The name conditional we chose is due to the fact that $\env_\mr{c}(Y)=L_p([0,1],\Sig_Y)$ is the range of the conditional expectation projection $\mc E^{\Sig_Y}$.

Note that for reflexive r.i. spaces over $[0,1]$ this defines an envelope which is $1$-complemented.
Observe that $\env_\mr{c}(Y)$ is always unital.  When $X=L_p[0,1]$, $X_\Sig$ is the Lebesgue space $L_p([0,1],\Sig)$, and consequently $\env_\mr{c}(Y)= L_p([0,1],\Sig_Y)$.  We have also the following classical result.

\begin{proposition}\label{oihj34irtjw4jrtw4rt4}
Suppose that $1\le p<\infty$, $p\neq 2$,  and $Y$ is a unital subspace of $L_p[0,1]$. Then,
$$\env_\mr{min}(Y)=\env_\mr{c}(Y)= \mr{Lat}(Y),$$
where $\lat(Y)$ is the lattice generated by $Y$.
%
%
%
\end{proposition}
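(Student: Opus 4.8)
The plan is to prove the chain of equalities $\env_{\mr{min}}(Y)=\env_{\mr{c}}(Y)=\lat(Y)$ for a unital subspace $Y$ of $L_p[0,1]$, $1\le p<\infty$, $p\neq 2$, by establishing the two inclusions $\env_{\mr{min}}(Y)\subseteq\lat(Y)$ and $\lat(Y)\subseteq\env_{\mr{c}}(Y)\subseteq\env_{\mr{min}}(Y)$, using the fact (from Proposition \ref{oi35dsfewfwerwe} and the subsequent definition) that $\env_{\mr{c}}(Y)=L_p([0,1],\Sig_Y)$ is $1$-complemented, together with the classical description of $1$-complemented sublattices of $L_p$.

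\textbf{Step 1: $\env_{\mr{c}}(Y)\subseteq\env_{\mr{min}}(Y)$.} Since $\env_{\mr{c}}(Y)=L_p([0,1],\Sig_Y)$ is the range of the conditional expectation $\mc E^{\Sig_Y}$, it is a $1$-complemented subspace of $L_p$ containing $Y$; as $\env_{\mr{min}}(Y)$ is by definition the \emph{smallest} $1$-complemented superspace of $Y$ (which exists here because $L_p$, $1<p<\infty$, is reflexive strictly convex, and the $p=1$ case can be treated directly since $L_1([0,1],\Sig_Y)$ is $1$-complemented and minimal among such — or by restricting to $1<p<\infty$ and handling $p=1$ separately via Ando's theorem), we get $\env_{\mr{min}}(Y)\subseteq\env_{\mr{c}}(Y)$. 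Combined with Step 3 this direction is an equality.

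\textbf{Step 2: $\lat(Y)\subseteq\env_{\mr{c}}(Y)$.} Every function in $Y$ is $\Sig_Y$-measurable by definition of $\Sig_Y$, hence so are all finite lattice combinations $|f|$, $f\vee g$, $f\wedge g$ of functions in $Y$ and their norm limits; thus $\lat(Y)\subseteq L_p([0,1],\Sig_Y)=\env_{\mr{c}}(Y)$. For the reverse inclusion $\env_{\mr{c}}(Y)\subseteq\lat(Y)$: since $Y$ is unital, $\mathbbm 1\in\lat(Y)$, and I would argue that $\Sig_{\lat(Y)}=\Sig_Y$ (the sublattice generated by $Y$ generates the same $\sigma$-algebra as $Y$, because lattice operations and limits do not enlarge the generated $\sigma$-algebra, and conversely $Y\subseteq\lat(Y)$); then by a standard approximation argument, a closed sublattice of $L_p$ containing $\mathbbm 1$ and generating $\Sig_Y$ must be all of $L_p([0,1],\Sig_Y)$ — indeed simple $\Sig_Y$-measurable functions are approximable: characteristic functions $\chi_A$ for $A\in\Sig_Y$ generated by finitely many sets $\{f_j>c_j\}$ with $f_j\in Y$ can be obtained as limits of lattice polynomials in the $f_j$ (e.g. $\chi_{\{f>c\}}=\lim_n (n(f-c)^+)\wedge\mathbbm 1$), so $\lat(Y)$ contains all such $\chi_A$ and hence, taking closures, all of $L_p([0,1],\Sig_Y)$. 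This gives $\lat(Y)=\env_{\mr{c}}(Y)$.

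\textbf{Step 3: $\env_{\mr{min}}(Y)\supseteq\env_{\mr{c}}(Y)$, equivalently every $1$-complemented superspace of $Y$ contains $L_p([0,1],\Sig_Y)$.} This is the crux and the expected main obstacle. The key input is the classical structure theory of contractive projections and $1$-complemented subspaces of $L_p$ for $p\neq 2$ (Ando, Douglas, Bernau--Lacey, Tzafriri): the range of a contractive projection on $L_p$ is \emph{isometric} to an $L_p$-space, and — crucially for a \emph{unital} $Y$, where the constant function lies in the range — such a range is in fact of the form $L_p([0,1],\Sig)$ for some $\sigma$-subalgebra $\Sig$ (the projection being, up to the obvious identifications, a conditional expectation). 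Given any $1$-complemented $Z$ with $Y\subseteq Z$, we get $Z=L_p([0,1],\Sig)$ for some $\Sig$; since $Y\subseteq Z$ forces all functions of $Y$ to be $\Sig$-measurable, minimality of $\Sig_Y$ gives $\Sig_Y\subseteq\Sig$, hence $\env_{\mr{c}}(Y)=L_p([0,1],\Sig_Y)\subseteq L_p([0,1],\Sig)=Z$. Taking the intersection over all such $Z$ yields $\env_{\mr{c}}(Y)\subseteq\env_{\mr{min}}(Y)$.

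The delicate point in Step 3 is justifying that a $1$-complemented \emph{unital} subspace of $L_p$ is genuinely $L_p([0,1],\Sig)$ \emph{sitting inside $L_p$ the standard way}, not merely isometric to some abstract $L_p$; here one invokes that a contractive projection $P$ on $L_p$ with $P\mathbbm1=\mathbbm1$ (which holds once $\mathbbm 1\in\range P$, by duality/averaging arguments) satisfies $P(f\cdot Pg)=Pf\cdot Pg$ on the range, i.e. it is an averaging/conditional-expectation operator — this is exactly the content of the Douglas--Ando description and is where $p\neq 2$ is used to rule out the extra isometries that would otherwise break the lattice structure. Once that identification is in hand, everything else is bookkeeping about $\sigma$-algebras, and combining Steps 1–3 gives $\env_{\mr{min}}(Y)=\env_{\mr{c}}(Y)=\lat(Y)$.
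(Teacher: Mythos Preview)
Your proof is correct and follows essentially the same route as the paper: the Douglas--Ando characterization of unital contractive projections on $L_p$ as conditional expectations gives $\env_{\mr{min}}(Y)=\env_{\mr{c}}(Y)$ (your Step~3 and the inclusion in Step~1), and the formula $\chi_{\{f>c\}}=\lim_n (n(f-c)^+)\wedge\mathbbm 1$ gives $\lat(Y)=L_p([0,1],\Sig_Y)$ (your Step~2). The paper organizes the second part slightly differently---it first proves the general claim that any unital closed sublattice $Z$ equals $L_p([0,1],\Sig_Z)$, deduces that $\lat(Y)$ is $1$-complemented, and then uses $\env_{\mr{min}}(Y)\subseteq\lat(Y)$---but the key ingredients are identical; note also that the paper observes that the Douglas--Ando argument yields existence of $\env_{\mr{min}}(Y)$ for $p=1$ directly, which cleans up the small awkwardness you flag in Step~1.
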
   
\begin{proof}
We  first prove that  $\env_\mr{c}(Y)=\env_\mr{min}(Y)$. We know that  conditional expectation operator $\mc E^{\Sig_Y}$ is a contractive projection whose range is $L_p([0,1],\Sig_Y)= \env_\mr{c}(Y)$. 
On the other hand, 
  classical results by Douglas \cite{Doug} (for $p=1$) and by Ando \cite{Ando} (for the rest of $p$'s) states that a contractive projection that fixes $\mathbbm 1_{[0,1]}$ must be  a conditional expectation defined by some $\sig$-subalgebra. 
  Hence any contractive projection $q$ whose range $R(q)$ contains $Y$ satisfies
  $R(q)=L_p([0,1],\Sig)$ for some $\sig$-subalgebra $\Sig$.  Since $Y\con R(q)$, it follows that $\Sig_Y\con \Sig$,  so $\env_\mr{c}(Y)=L_p([0,1],\Sig_Y)\con L_p([0,1],\Sig)=R(q)$. This proves that $\env_{\rm c}(Y)=\env_{\rm min}(Y)$ (including that the latter exists when $p=1$).
  
  
We now prove  that $\env_\mr{c}(Y)= \mr{Lat}(Y)$. We start with the following.
 \begin{claim} \label{njiwejriojewoirew} Suppose that $Z$ is a unital sublattice. Then
 $\Sig_Z= \conj{A}{\mathbbm 1_A\in Z}$ and $Z= L_p([0,1], \Sig_Z)$.
 
 \end{claim}   
\prucl
Set $\De:=\conj{A}{\mathbbm 1_A\in Z}$. Obviously, $\De\con \Sig_Z$.   
 Observe that in general for an arbitrary subspace $V$ one has that $\Sig_V$ is the $\sig$-subalgebra generated by the sets $\{f>c\}$ for $c>0$ and $f\in V$, and when $V$ is assumed to be lattice then those $f\in V$ can be assumed to be positive. Fix now a positive $f\in Z$, $c>0$, and set $g:= (f- c\mathbbm 1_{[0,1]})^+\in Z$. Then, 
$$\mathbbm 1_{\{f>c\}}=\mathbbm 1_{\{g>0\}}= (\sup_{n\in \N} n \cdot g )\wedge \mathbbm 1_{[0,1]}\in Z,$$
and consequently $\{f>c\}\in  \De$. By the previous remark, $\Sig_Z\con \De$.  Finally, trivially we have that $Z\con L_p([0,1], \Sig_Z)$; as for the reverse inclusion, observe that it follows from  the equality $\Sig_Z=\conj{A}{\mathbbm 1_A\in Z}$  that every simple function  in $ L_p([0,1], \Sig_Z)$ belongs to $Z$, hence  $L_p([0,1],\Sig_Z)\con Z$. 
\fprucl	
We have by minimality that $\mr{Lat}(Y)\con L_p([0,1],\Sig_Y)=\env_\mr{c}(Y)$. Since $\mr{Lat}(Y)$ is 1-complemented, it follows that $\env_\mr{c}(Y)=\env_\mr{min}(Y)\con \mr{Lat}(Y)$ and we are done.
\end{proof}


\begin{lemma}\label{inclu} If $X$ is a reflexive r.i. space on $[0,1]$,  then for any  unital subspace $Y$ one has that  
$$\env_\mr{min}(Y) \subseteq \env(Y) \subseteq \env_\mr{c}(Y).$$ 
 \end{lemma}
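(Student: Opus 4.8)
The plan is to establish the two inclusions separately, each by reducing to a result already available earlier in the section. Neither requires new hard analysis: the left-hand inclusion rests on the fact that the isometric envelope is itself $1$-complemented, and the right-hand inclusion on the fact (Proposition \ref{oi35dsfewfwerwe}) that the envelope fixes the lattice-measurable subspaces $X_\Sig$, combined with monotonicity of the envelope map.

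First I would treat the left-hand inclusion $\env_\mr{min}(Y) \subseteq \env(Y)$. Since $X$ is a reflexive r.i. space on $[0,1]$ it is separable, so Proposition \ref{uhiu4iur4rt4}, applied to the full group $G=\isome(X)$, guarantees that $\env(Y)=\env_{\isome(X)}(Y)$ is $1$-complemented in $X$, by a projection lying in $\overline{\mr{conv}(\isome(X))}^{\mr{WOT}}$. Thus $\env(Y)$ is itself a $1$-complemented subspace of $X$ containing $Y$, and by the very definition of the minimal envelope as the smallest $1$-complemented superspace of $Y$, it follows that $\env_\mr{min}(Y) \subseteq \env(Y)$ whenever the former is defined. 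The point worth emphasising is that a general reflexive r.i. norm need not be strictly convex, so Remark \ref{remarkiii} does not apply directly; it is precisely the Lancien renorming \cite{Lancien} built into Proposition \ref{uhiu4iur4rt4} that secures $1$-complementation with respect to the original r.i. norm.

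For the right-hand inclusion $\env(Y) \subseteq \env_\mr{c}(Y)$ I would argue by monotonicity. By definition $\env_\mr{c}(Y)=X_{\Sig_Y}$, and every function of $Y$ is $\Sig_Y$-measurable, so $Y \subseteq X_{\Sig_Y}$. Property (b) of the envelope map then gives $\env(Y) \subseteq \env(X_{\Sig_Y})$, while Proposition \ref{oi35dsfewfwerwe} yields $\env(X_{\Sig_Y})=X_{\Sig_Y}$. Combining, $\env(Y) \subseteq X_{\Sig_Y}=\env_\mr{c}(Y)$, as required; note that this half uses neither unitality of $Y$ nor strict convexity.

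The only genuinely delicate point is the first inclusion, where one must secure $1$-complementation of $\env(Y)$ in a norm that may fail strict convexity; this is absorbed entirely into the cited Proposition \ref{uhiu4iur4rt4}. One should also read the inclusion $\env_\mr{min}(Y)\subseteq\env(Y)$ as holding whenever $\env_\mr{min}(Y)$ is defined, its existence being automatic, for instance, in the $L_p$-case treated in Proposition \ref{oihj34irtjw4jrtw4rt4}.
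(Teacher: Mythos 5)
Your proof is correct and follows essentially the same route as the paper: the left inclusion via $1$-complementation of $\env(Y)$ from Proposition \ref{uhiu4iur4rt4}, and the right inclusion via monotonicity together with $\env(X_{\Sig_Y})=X_{\Sig_Y}$ from Proposition \ref{oi35dsfewfwerwe}. Your observation that Proposition \ref{uhiu4iur4rt4} alone suffices (the paper also cites Proposition \ref{ghfddddd}, whose strict convexity hypothesis a general reflexive r.i. norm need not satisfy) is a small but legitimate sharpening of the same argument, not a different one.
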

 \begin{proof}
From Proposition \ref{ghfddddd} and Proposition  \ref{uhiu4iur4rt4} we know that $\env(Y)$ is 1-complemented, hence $\env_\mr{min}(Y)\con \env(Y)$. Also, it follows from Proposition \ref{oi35dsfewfwerwe} that  $\env(\env_\mr{c}(Y))=\env_\mr{c}(Y)$ and consequently,  $\env(Y) \subseteq \env_\mr{c}(Y)$.
\end{proof}


\begin{lemma}\label{ghghgh}
If $Y$ is a unital subspace of $L_1$, then 
$\lat(Y) \subseteq \env(Y)$.  
\end{lemma}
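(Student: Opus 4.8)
The strategy is to reduce to the case of a \emph{finite-dimensional} unital sublattice, where one can exhibit concrete nets of isometries of $L_1$ witnessing membership in the envelope, and then pass to the limit using Corollary~\ref{unionenveloppe}. Concretely, I would first observe that $\lat(Y)$ is itself a unital sublattice, and (using the fact that $\Sig_Y$ is separable, and that, for unital sublattices, $\lat(Y) = L_1([0,1],\Sig_Y)$ by Claim~\ref{njiwejriojewoirew}) write $\lat(Y) = \overline{\bigcup_n V_n}$ where $V_n = L_1([0,1],\De_n)$ for an increasing sequence of finite subalgebras $\De_n$ of $\Sig_Y$ with dense union. By Corollary~\ref{unionenveloppe} it then suffices to show $V_n \subseteq \env(Y)$ for each $n$; and since $Y \subseteq \lat(Y)$ and $V_n \subseteq \lat(Y)$, by monotonicity of the envelope it would even be enough to prove that each such finite-dimensional $V_n$ satisfies $V_n \subseteq \env(W)$ for some finite-dimensional subspace $W \subseteq Y$ with $\De_n \subseteq \Sig_W$. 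So the crux is a finite-dimensional statement: if $W$ is a finite-dimensional unital subspace of $L_1$ and $A \in \Sig_W$, then $\mathbbm 1_A \in \env(W)$.

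For that finite-dimensional core I would argue directly from the definition of the Korovkin envelope via isometries of $L_1$. Recall that the surjective isometries of $L_1[0,1]$ are exactly the weighted composition operators $f \mapsto h \cdot (f\circ \sig)$ with $\sig$ a measure-class automorphism and $h$ the appropriate Radon--Nikodym factor (Banach--Lamperti). Suppose $A \in \Sig_W$ but $\mathbbm 1_A \notin \env(W)$; then there is a net $(T_i)$ of isometries of $L_1$ converging pointwise to $\id$ on $W$ with $T_i\mathbbm 1_A \not\to \mathbbm 1_A$. The point is that convergence of $T_i$ to the identity on all of $W$, hence on the sublattice and algebra it generates, forces $T_i$ to fix (in the limit) all the $\Sig_W$-measurable sets: if $f \in W$ is positive and $T_i f \to f$, then the weighted composition structure forces $\sig_i$ to move the level sets $\{f>c\}$ only by a null-vanishing amount and $h_i \to \mathbbm 1$, so $T_i \mathbbm 1_{\{f>c\}} \to \mathbbm 1_{\{f>c\}}$; iterating finite lattice/Boolean operations on finitely many generators of $\De_W := \{A : \mathbbm 1_A \in \lat(W)\} \supseteq \Sig_W$ (which is a finite algebra since $W$ is finite-dimensional) gives $T_i \mathbbm 1_A \to \mathbbm 1_A$ for every $A \in \De_W$. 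Actually, since $\env(Y)$ is already known to be $1$-complemented and $\lat(Y)\subseteq \env_\mr{c}(Y)$, an alternative and cleaner route is: by Lemma~\ref{inclu}, $\env(Y)$ is a $1$-complemented superspace of $Y$, and one shows any such space must be a sublattice containing $Y$, hence contains $\lat(Y)$; here one can invoke the Ando/Douglas description (as in Proposition~\ref{oihj34irtjw4jrtw4rt4}) but that description requires $p\ne 1$ for uniqueness, so for $p=1$ one must instead use the direct net argument above or cite the $L_1$ version of the structure of contractive projections.

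The main obstacle I anticipate is precisely the $p=1$ pathology: contractive projections on $L_1$ fixing $\mathbbm 1$ need not be conditional expectations, so one cannot simply quote Proposition~\ref{oihj34irtjw4jrtw4rt4}'s argument. Hence the argument should go through the isometry-group definition of $\env$ rather than through $\env_\mr{min}$. The technical heart is the quantitative claim that if a measure-class automorphism $\sig$ and weight $h$ satisfy $\|h\cdot(f\circ\sig) - f\|_1 \le \vep$ for all $f$ in a fixed finite generating set of a unital sublattice, then $\|h\cdot(\mathbbm 1_A\circ\sig) - \mathbbm 1_A\|_1$ is small for every $A$ in the (finite) algebra generated by the level sets — uniformly in a way that survives the limit. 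I would prove this by first reducing to $h \ge 0$, noting $\|h\circ\sig^{-1} - 1\|_1 \to 0$ from applying the hypothesis to $f = \mathbbm 1_{[0,1]} \in W$, then controlling $\|\mathbbm 1_{\sig^{-1}A}\triangle \mathbbm 1_A\|_1$ for level sets $A=\{f>c\}$ by a standard ``layer-cake'' estimate: $\int_0^{\|f\|_\infty} \mu(\{f>c\}\triangle \sig^{-1}\{f>c\})\,dc \lesssim \|f\circ\sig - f\|_1$ after absorbing the weight, and finally using that finitely many Boolean operations on sets that are each $L_1$-close are $L_1$-close. Once the finite-dimensional claim $\mathbbm 1_A \in \env(W)$ is in hand, assembling the lemma via Corollary~\ref{unionenveloppe} and monotonicity is routine.
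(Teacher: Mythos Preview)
Your route is quite different from the paper's and, as sketched, has real gaps.

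The paper does not reduce to finite-dimensional sublattices at all. Instead it uses the equimeasurability theorem of Plotkin--Rudin: any isometric embedding $t:Y\to L_1$ of a unital $Y$ extends (uniquely) to an isometry on $\lat(Y)$ via the explicit formula $T(|f|)=\dfrac{t(\mathbbm 1)}{|t(\mathbbm 1)|}\,|t(f)|$. Given a net $(t_n)_n\subseteq\isome(L_1)$ converging to the identity on $Y$, one then checks by hand that $T_n(|f|)\to |f|$ for each $f\in Y$, splitting $T_n(|f|)-|f|$ into a term controlled by $\|t_nf-f\|_1$ and a term supported on $\{t_n\mathbbm 1\le 0\}$, whose measure tends to $0$ since $t_n\mathbbm 1\to\mathbbm 1$. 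Iterating (since $f\vee g=\tfrac12(f+g+|f-g|)$ and each stage remains unital) yields all of $\lat(Y)$. No Banach--Lamperti decomposition, no layer-cake, and no appeal to Corollary~\ref{unionenveloppe} are needed.

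In your proposal there are two genuine problems. First, you invoke Corollary~\ref{unionenveloppe}, but that corollary requires $X$ reflexive, which $L_1$ is not; fortunately you do not actually need it (the implication ``$V_n\subseteq\env(Y)$ for all $n\Rightarrow\overline{\bigcup_n V_n}\subseteq\env(Y)$'' is just closedness of $\env(Y)$), so this is a red herring rather than a fatal error. Second, and more seriously, your ``finite-dimensional core'' rests on the assertion that for a finite-dimensional unital subspace $W$, the algebra $\De_W=\{A:\mathbbm 1_A\in\lat(W)\}=\Sig_W$ is a \emph{finite} algebra. This is false in general: already for $W=\mathrm{span}(\mathbbm 1,\mathrm{id})\subseteq L_1[0,1]$ one has $\Sig_W$ equal to the full Borel $\sigma$-algebra. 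Consequently a fixed $A\in\Sig_W$ need not be reachable from finitely many level sets $\{f>c\}$ by finitely many Boolean operations, and your passage ``iterating finite lattice/Boolean operations on finitely many generators'' breaks down. The layer-cake estimate you propose also only gives $\mu(\{f>c\}\triangle\sig_i^{-1}\{f>c\})\to 0$ for \emph{almost every} $c$, not for a prescribed $c$ determined by a fixed atom of $\De_n$, and the translation between $\{T_if>c\}$ and $\sig_i^{-1}\{f>c\}$ is muddied by the weight $h_i$ before you have established $h_i\to 1$ in a strong enough sense. These are not unfixable, but the sketch as written does not close them; the equimeasurability argument bypasses all of this in two lines.
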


\begin{proof}By the equimeasurability formula, any isometric map $t$ from  $Y$ into $L_1$ extends uniquely to an isometric map $T$ on $\lat(Y)$   by the formula $$T(|f|):=\frac{t(\mathbbm 1)}{|t(\mathbbm 1)|}|t(f)|.$$ 
So, fix $f\in Y$, and  assume $(t_n)_n$ tends SOT to $t$ on $Y$.  We claim that $(t_n(|f|))_n$ converges to $T(|f|)$,  that by the
  characterization of $\env(Y)$ will mean that $|f| \in \env(Y)$.
Let us prove this claim. 
By composing by an isometry acting by change of signs, it is enough to prove the claim when $t(\mathbbm 1)$ is non-negative, and by composing with an isometric embedding of $L_1(\supp(t(\mathbbm 1)))$ onto $L_1$ we may assume that  $t(\mathbbm 1)$ is positive, and therefore  that $T(|f|)=|t(f)|$.
We compute:
\begin{equation}\label{lkwnjriweljrnewklrew}
T_n(|f|)-T(|f|)=\frac{t_n (\mathbbm 1)}{|t_n (\mathbbm 1)|} |t_n (f)| -  |t (f)|
= \frac{t_n (\mathbbm 1)}{|t_n (\mathbbm 1)|} (|t_n (f)|-|t(f)|)+(\frac{t_n (\mathbbm 1)}{|t_n (\mathbbm 1)|}  - \mathbbm 1) |t (f)|    
\end{equation}
The norm of the first part of the sum is  at most
$\int ||t_n (f)|-|t(f)|| \leq \int |(t_n-t)(f)|$, and this tends to $0$.  Since $(t_n(\mathbbm 1))_n$ tends to $t(\mathbbm 1)$ in the $L_1$-norm,   the measure of the sets $A_n:=\{s\in [0,1]\, : \, t_n(1)(s)\le 0\}$  tend to $0$.
The norm  of the second summand in \eqref{lkwnjriweljrnewklrew} is  controlled by
$2 \int_{A_n}|t(f)|$ and therefore tends to $0$ as well.
\end{proof}


\begin{theorem}\label{62} If $1 \leq p<+\infty, p \neq 2$ and $Y$ is a unital subspace of $L_p$ then  
$$\env(Y)=\lat(Y)=\env_\mr{c}(Y)=\env_\mr{min}(Y).$$ \end{theorem}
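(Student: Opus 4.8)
The plan is to combine the inclusions already established in this section, so that the only work remaining is to pin down one inclusion for $p > 1$ and to handle $p = 1$ separately. Recall from Proposition~\ref{oihj34irtjw4jrtw4rt4} that $\env_\mr{min}(Y) = \env_\mr{c}(Y) = \lat(Y)$ for any unital $Y \subseteq L_p$, $p \neq 2$, $1 \le p < \infty$ (including the existence of $\env_\mr{min}(Y)$ when $p=1$). So it suffices to prove $\env(Y) = \env_\mr{c}(Y)$; equivalently, in view of Lemma~\ref{inclu}, which gives $\env_\mr{min}(Y) \subseteq \env(Y) \subseteq \env_\mr{c}(Y)$ in the reflexive range $1<p<\infty$, it is enough to show $\env_\mr{c}(Y) \subseteq \env(Y)$, i.e. $\lat(Y) \subseteq \env(Y)$.

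First I would dispose of the reflexive case $1 < p < \infty$, $p \neq 2$. Here $L_p$ is reflexive, separable, and LUR, so by Proposition~\ref{ghfddddd} (and Proposition~\ref{uhiu4iur4rt4}) $\env(Y)$ is $1$-complemented, giving $\env_\mr{min}(Y) \subseteq \env(Y)$; and by Proposition~\ref{oi35dsfewfwerwe}, $\env_\mr{c}(Y) = X_{\Sig_Y}$ is its own envelope, so $\env(Y) \subseteq \env_\mr{c}(Y)$. Combined with Proposition~\ref{oihj34irtjw4jrtw4rt4}, all four spaces are squeezed between $\env_\mr{min}(Y)$ and $\env_\mr{c}(Y)$, which coincide; hence they are all equal. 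The point is that in the reflexive range nothing beyond the general machinery of Section~2 and the classical Ando--Douglas description of contractive projections is needed.

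The remaining, genuinely different, case is $p = 1$, which is not reflexive, so Lemma~\ref{inclu} and the $1$-complementation results do not apply directly. Here I would proceed as follows. The inclusion $\lat(Y) \subseteq \env(Y)$ is exactly Lemma~\ref{ghghgh}, proved via the Pe\l czy\'nski--type equimeasurability extension formula $T(|f|) = \frac{t(\mathbbm 1)}{|t(\mathbbm 1)|}|t(f)|$ and a direct SOT-continuity estimate. For the reverse inclusion $\env(Y) \subseteq \env_\mr{c}(Y) = L_1([0,1],\Sig_Y)$: I would use that $\env_\mr{c}(Y) = \lat(Y)$ is $1$-complemented in $L_1$ via the conditional expectation $\mc E^{\Sig_Y}$, and that this projection lies in $\overline{\mr{conv}(\isome(L_1))}^{\mr{SOT}}$ — indeed $\mc E^{\Sig_Y}$ can be obtained as an SOT-limit of averages of the isometries $T_\sigma$ associated to measure-preserving transformations $\sigma$ fixing $\Sig_Y$ pointwise (a standard martingale/averaging argument over finer and finer partitions refining $\Sig_Y$, using that $\bigcup_n \Sig_{\De_n}$ is dense in $\Sig_Y$ for an increasing sequence of finite subalgebras $\De_n$). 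Once we know that $\env_\mr{c}(Y)$ is the range of a projection that is an SOT-limit of convex combinations of isometries acting as the identity on $Y$, an argument as in Remark~\ref{remrem} — now with SOT in place of WOT, which is legitimate because in $L_1$ norm-bounded SOT-limits of isometries are still contractions fixing $Y$ — shows any $x \in \env(Y)$ satisfies $\mc E^{\Sig_Y} x = x$, hence $x \in \env_\mr{c}(Y)$. Combining with Lemma~\ref{ghghgh} and Proposition~\ref{oihj34irtjw4jrtw4rt4} (valid for $p=1$) closes the chain of equalities.

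The main obstacle is the $p=1$ reverse inclusion $\env(Y) \subseteq \env_\mr{c}(Y)$: unlike the reflexive case, one cannot invoke reflexivity to get a WOT-cluster point of the averaging projections, so one must argue directly that $\mc E^{\Sig_Y}$ belongs to the SOT-closure of $\mr{conv}(\mr{Stab}_{\isome(L_1)}(Y))$ and that SOT-convergence of isometries on $Y$ forces the limit to fix $\env(Y)$ pointwise. The cleanest route is probably: reduce to $Y = L_1([0,1],\Sig_Y)$ finitely generated via Corollary~\ref{unionenveloppe}'s separable analogue — note $L_1$ is separable, and although $L_1$ is not reflexive one has a direct density/approximation argument for $\env$ since $\env_\mr{c}$ commutes with increasing unions of finite subalgebras — then observe $\env(L_1([0,1],\De)) = L_1([0,1],\De)$ for finite $\De$ by adapting Lemma~\ref{i8942323}'s argument to the Korovkin (rather than algebraic) envelope using the explicit isometries $T_\sigma$. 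This reduces everything to the finite-dimensional sublattice case, where the statement is transparent.
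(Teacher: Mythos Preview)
Your treatment of the reflexive range $1<p<\infty$, $p\neq 2$, is correct and is exactly what the paper does: squeeze $\env(Y)$ between $\env_\mr{min}(Y)$ and $\env_\mr{c}(Y)$ using Lemma~\ref{inclu}, and conclude by Proposition~\ref{oihj34irtjw4jrtw4rt4}.

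The gap is in the case $p=1$. Your primary approach hinges on the claim that the conditional expectation $\mc E^{\Sigma_Y}$ lies in $\overline{\conv(\stab_{\isome(L_1)}(Y))}^{\mr{SOT}}$. This is not a ``standard martingale/averaging argument'': every element of $\conv(\stab_{\isome(L_1)}(Y))$ fixes the subspace $\mr{Fix}(\stab_{\isome(L_1)}(Y))=\env^\mr{alg}(\env_\mr{c}(Y))$ pointwise, and so does any SOT-limit of such elements. Hence if your claim held, $\mc E^{\Sigma_Y}$ would fix $\env^\mr{alg}(\env_\mr{c}(Y))$, forcing $\env^\mr{alg}(\env_\mr{c}(Y))=\env_\mr{c}(Y)$; by Remark~\ref{lknjkio855} this is equivalent to $\Sigma_Y$ being a fixed-point subalgebra, which is not guaranteed. (Note also that the martingale convergence $\mc E^{\Delta_n}\to \mc E^{\Sigma_Y}$ does not help, since the $\mc E^{\Delta_n}$ are themselves not convex combinations of isometries in $\stab_{\isome(L_1)}(Y)$: such isometries must preserve all of $\Sigma_Y$, not just $\Delta_n$, and so cannot average down to the coarser $\Delta_n$.) Your fallback route via an analogue of Corollary~\ref{unionenveloppe} runs into the same obstruction: the proof of Proposition~\ref{union} genuinely uses reflexivity to produce a WOT-cluster point of the projections, and you do not supply a substitute for $L_1$.

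The paper circumvents all of this by a different device: for $p=1$ it fixes some $1<q\neq 2$ and uses the Mazur map $\phi:S_{L_1}\to S_{L_q}$, which conjugates $\isome(L_1)$ onto $\isome(L_q)$ and sends unital sublattices to unital sublattices. One checks that $\phi(\env_{(L_1)}(Y))\subseteq \env_{(L_q)}(\phi(Y))$; since the theorem is already known in $L_q$ (the reflexive case), $\env_{(L_q)}(\phi(Z))=\phi(Z)$ for any unital sublattice $Z$, whence $\env_{(L_1)}(Z)=Z$. Applying this to $Z=\lat(Y)$ gives $\env(Y)\subseteq \env(\lat(Y))=\lat(Y)$, and Lemma~\ref{ghghgh} closes the chain. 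The point is that the Mazur map transfers the problem to a setting where the reflexive machinery (Proposition~\ref{union}, Lemma~\ref{inclu}) is available, rather than trying to reproduce that machinery inside $L_1$.
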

\begin{proof} 
For  $p>1$,  we know from  Proposition \ref{oihj34irtjw4jrtw4rt4} and Lemma \ref{inclu}  that 
$$\lat(Y)=\env_\mr{min}(Y)\con \env(Y)\con \env_\mr{c}(Y)=\lat(Y).$$
The proof for $p=1$ uses the Mazur maps.
For some fixed $1<p \neq 2$ we let $\phi: S_{L_1} \rightarrow S_{L_p}$ denote the Mazur map, extend it by homogeneity to a map between $L_1$ and $L_p$. Observe that $T \in \isome(L_1)$ if and only if $\phi T \phi^{-1} \in \isome(L_p)$. Let us also note that if $Y$ is a unital lattice of $L_1$ then $\phi(Y)=L_p \cap Y$ which is a unital sublattice of $L_p$. 
As we have seen in Claim \ref{njiwejriojewoirew}, there is a 1-1 correspondence between the class of unital sublattices of $L_p$ and the class of sub-$\sigma$-algebras of Borel subsets of $[0,1]$,  $$Y \mapsto \Sigma_Y=\{A \in {\mathcal B}: 1_A \in Y\}$$ and
$$\Sigma \mapsto Y_\Sigma=L_p([0,1],\Sigma).$$
%
To avoid misinterpretations, we use   the terminology $\env_{(L_p)}(Z)$ to denote the isometric envelope of some $Z\con L_p[0,1]$ in $L_p[0,1]$.  
\begin{claim}
$\phi(\env_{(L_1)}(Y)\con \env_{L_p}(\phi(Y))$. 
\end{claim}  
Admitting the claim, we deduce that if $Z\con L_1[0,1]$ is a unital sublattice, then so is $\phi(Z)$, hence $\phi(\env_{(L_1)}(Z))\con \env_{(L_p)}(\phi(Z))=\phi(Z)$, and consequently $\env_{(L_1)}(Z)=Z$, because $\phi$ is 1-1.  This implies that  
 $\env(Y) \subseteq \env(\lat(Y)) = \lat(Y)$,
and the equality $\env(Y)=\lat(Y)$ follows from Lemma \ref{ghghgh}.  The rest of the equalities are in Theorem \ref{oihj34irtjw4jrtw4rt4}. 

To prove the claim, assume $(T_i)_i$ is a net of isometries of $L_p$ so that
$(T_i (g))_i$ tends to $g$ for all $g \in \phi(Y)$. This is equivalent to that $(\phi U_i (y))_i$  tends to $\phi (y)$ for all $y$ in $Y$ where each $U_i:=\phi^{-1} T_i \phi$ is the associated   isometry on $L_1$ to $T_i$. Since the Mazur map is an homeomorphism (see \cite{Mazur}), this is equivalent to that $(U_i)_i$ converges to $\id$ pointwise on $Y$, which, since $f \in \env_{(L_1)}(Y)$, implies that $(U_i (f))_i$ tends to $f$. Therefore $(T_i \phi (f))_i$ converges to $\phi f$, because $T_i \phi (f)=\phi U_i (f)$.
\end{proof}

\begin{remark} \label{oi32rio23rfe}
 It follows easily that the previous proposition  is true for the Lebesgue spaces $L_p(A)$ for every Borel subset $A$ of the unit interval. Hence, if $Y\con L_p[0,1]$, $p\neq 2$,  is a subspace such that $\mathbbm 1_A\in Y$ where $A$ is the  support of $Y$,  then 
 $\env(Y)=\lat(Y)=\env_\mr{c}(Y)=\env_\mr{min}(Y).$ This is so, because all these envelopes on the space $L_p(A)$ are equal to their versions in the full space $L_p[0,1]$. 
 \end{remark}  	

\begin{remark}
In the case $1<p<+\infty$ this result can also be deduced from Proposition \ref{ghfddddd}(b) and the description by Peller of the WOT closure ${\mathcal M}$ of the isometry group of $L_p$ \cite{Peller}, from which it follows that any contractive projection on $L_p$ belongs to ${\mathcal M}$.
There does not seem to be such a direct proof in the case $p=1$, since our WOT characterization of the envelope is, apparently, only valid in the reflexive case. 
\end{remark}
 
We have the following (partial) extension of Theorem \ref{62}.
\begin{proposition}  Suppose that $1 \leq p<+\infty$, and let $Y$ be a subspace of $L_p$. Then we have that
\begin{enumerate}[\rm(1)]
 \item  $\env(Y)=\env_{\rm min}(Y)$.
 \item 
If 
$p$ is not even, and $g\in Y$ is of full support in $Y$ then
$$\env(Y)= g \cdot \env(Y/g)=g \cdot \lat(Y/g)= g\cdot \env_\mr{c}(Y/g).$$

\end{enumerate}
 
\end{proposition}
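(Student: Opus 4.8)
The plan is to reduce everything to the unital case already treated in Theorem \ref{62}, using the two standard devices available for $L_p$-spaces: (a) the fact that an isometry of $L_p$ ($p$ not even) is determined by a "weighted composition" structure coming from equimeasurability (Plotkin--Rudin), and (b) the change of density trick $f\mapsto g\cdot f$ that conjugates $\isome(L_p)$ with a subgroup of $\isome(L_p)$ acting naturally on the "quotient" space $L_p(Y/g)$.

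First I would prove item (1). The inclusion $\env_{\rm min}(Y)\subseteq\env(Y)$ is immediate from Proposition \ref{ghfddddd}(1) together with Proposition \ref{uhiu4iur4rt4} (for $p>1$, reflexivity gives that $\env(Y)$ is $1$-complemented; for $p=1$ one argues separately, as in Lemma \ref{ghghgh}, or passes through the Mazur map). For the reverse inclusion $\env(Y)\subseteq\env_{\rm min}(Y)$, the point is that every $1$-complemented subspace of $L_p$ containing $Y$ is of the form $g\cdot L_p([0,1],\Sigma)$ for an appropriate weight $g$ and $\sigma$-subalgebra $\Sigma$ — this is the classical Ando/Douglas-type description of contractive projections on $L_p$ (or rather of their ranges, which for $p\neq 2$ are exactly the isometric copies of $L_p$-spaces sitting $1$-complemented). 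Taking $\env_{\rm min}(Y)=g\cdot L_p([0,1],\Sigma_0)$ with $\Sigma_0$ minimal, one checks that the conditional-expectation-type projection onto it lies in $\overline{\mathrm{conv}(\isome(L_p))}^{\mathrm{WOT}}$, so by Remark \ref{remrem} (applied after an LUR renorming, Proposition \ref{uhiu4iur4rt4}) $\env(Y)$ is contained in its range. In the reflexive case this is clean; the case $p=1$ again goes through the Mazur map as in the proof of Theorem \ref{62}.

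Next, for item (2), fix $g\in Y$ of full support in $Y$. I would consider the isometry (change of density) $M_g : L_p(\mathrm{supp}\,g, |g|^p\,dm) \to L_p(\mathrm{supp}\,g,dm)$, $h\mapsto g\cdot h$ (up to the usual unimodular factor), which carries the unital subspace $Y/g := \{h : g h\in Y\}$ (containing $\mathbbm 1$) to $Y$. The key observation is that $M_g$ conjugates $\isome\big(L_p(\mathrm{supp}\,g)\big)$ onto $\isome\big(L_p(\mathrm{supp}\,g,|g|^p dm)\big)$, and because $p$ is not even, by equimeasurability every isometry on the weighted space is again of weighted-composition form, hence these two isometry groups are genuinely intertwined by $M_g$; moreover $M_g$ is a homeomorphism for the SOT. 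Therefore, exactly as in the Mazur-map argument of Theorem \ref{62}, a net $(T_i)$ of isometries converging to $\mathrm{id}$ pointwise on $Y$ corresponds to a net of isometries converging to $\mathrm{id}$ pointwise on $Y/g$, and $T_i(g\cdot h)=g\cdot(\text{conjugated isometry})(h)$. This gives $\env(Y)=g\cdot\env(Y/g)$ directly from the definition of the Korovkin envelope. Since $Y/g$ is unital, Theorem \ref{62} (and Remark \ref{oi32rio23rfe} to handle the support being a proper Borel subset $A$ rather than all of $[0,1]$) yields $\env(Y/g)=\lat(Y/g)=\env_\mathrm{c}(Y/g)=\env_\mathrm{min}(Y/g)$, and multiplying back by $g$ finishes the proof.

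The main obstacle I expect is the step "every isometry of a weighted $L_p$-space with $p$ not even is of weighted-composition type, and $M_g$ intertwines the full isometry groups" — that is, making precise that changing the density does not create or destroy isometries for $p\notin 2\N$. This rests on the Banach--Lamperti/Plotkin--Rudin description of $\isome(L_p)$ and needs a little care about the support of $g$ (one works inside $L_p(\mathrm{supp}\,g)$, and then uses Remark \ref{oi32rio23rfe} to transfer back to $L_p[0,1]$); the evenness hypothesis is exactly what excludes the extra "Gaussian-type" isometries that would break the intertwining. Once that structural fact is in hand, both (1) and (2) are formal consequences of Theorem \ref{62} and the Korovkin-envelope definition, together with the LUR-renorming and WOT-projection machinery of Section 2.
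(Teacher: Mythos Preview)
Your approach to item~(1) is more involved than necessary and leaves the case $p=1$ only sketched. The paper's argument is much shorter and uniform in $p$: pick $g\in Y$ with $\supp g=\supp Y$; when $Y$ has full support there is a \emph{surjective} isometry $T\in\isome(L_p)$ with $Tg=\mathbbm 1$ (this uses only the Banach--Lamperti description of $\isome(L_p)$, valid for all $p\neq 2$), so $TY$ is unital and Theorem~\ref{62} gives $\env(TY)=\env_{\rm min}(TY)$. Since both the isometric envelope and $\env_{\rm min}$ are preserved under surjective isometries (Remark~\ref{isomenvmin} and the comment before Example~\ref{hilbert}), one obtains $\env(Y)=\env_{\rm min}(Y)$ directly. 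No Peller-type description of $\overline{\conv(\isome(L_p))}^{\mathrm{WOT}}$ or Mazur-map transfer is needed, and $p=1$ requires no separate treatment. When $Y$ does not have full support one works inside $L_p(\supp Y)$ via Remark~\ref{oi32rio23rfe}.

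For item~(2) the two arguments are genuinely different. You propose to conjugate by the change-of-density isometry $M_g:L_p(\supp g,|g|^p\,dm)\to L_p(\supp g,dm)$ and then apply Theorem~\ref{62} to the unital subspace $Y/g$ of the weighted space; this is correct and more elementary than the paper's route. Note, however, that your ``main obstacle'' is illusory: $M_g$ is a surjective linear isometry between two Banach spaces, so it automatically conjugates their isometry groups and transports Korovkin envelopes---no equimeasurability is needed for that step, and the weighted $L_p$ is isometric to $L_p[0,1]$ in any case. The paper instead stays inside $L_p[0,1]$ and uses the \auh\ property (this is where $p$ not even enters) together with Proposition~\ref{ext}: the isometric embedding $t:Z\to Y$ sending $\mathbbm 1$ to $g$ extends uniquely to $T:\env(Z)\to\env(Y)$, and Koldobsky's explicit formula for $T$ then identifies $\env(Y)$ with $g\cdot L_p(\Sigma_{Y/g})$. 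Your approach is more direct; the paper's exhibits the result as an application of the extension machinery of Section~2.
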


\begin{proof}
Pick $g$ of full support in $Y$. To prove (1) first assume $Y$ has full support. Let
$T$ be an isometry on $L_p$ sending $g$ to $1$, so that $TY$ is unital.
Then (1) follows from the equality $\env(TY)=\env_{\rm min}(TY)$ (Proposition \ref{62}), and  from Remarks
\ref{isomenvmin} and before Example \ref{hilbert} about preservation of the minimal and isometric envelopes by isometries. When $Y$ does not have full support the proof is similar using Remark 
\ref{oi32rio23rfe}.

The last two equalities in (2) for {\em every $p\neq 2$} also follow from Remark \ref{oi32rio23rfe}. We have to show that $\env(Y)= g \env(Y/g)$ for non-even $p$'s. 
It is well-known that for every $1\le p<\infty$, the envelope $\env_\mr{c}(Y)=L_p([0,1],\Sig_Y)$ is the closure of the subspace of functions $B(f_1,\dots,f_n)$ where $f_1,\dots,f_n\in Y$ and $B: \R^n\to \R$ is Borel.  Suppose now that $p\notin 2\N$, fix $Y\con L_p[0,1]$ and $g\in Y$ with full support.  
 Let $t$ be an isometry between a unital subspace $Z$ of $L_p$ and $Y$ sending $\mathbbm 1$ to $g$. Since $L_p[0,1]$ is an approximately ultrahomogeneous space, it follows that there is  a unique extension  $T$  of $t$ from $\env(Z)$ onto $\env(Y)$.  In fact, $T$ is defined by 
 $$T(B(f_1,\dots, f_n))= g \cdot B(tf_1/g,\dots, t f_n /g)$$
 for every $f_1,\dots, f_n\in Z$ and every Borel function $B:\R^n\to \R$ (see \cite[Theorem 4]{koldo}).  Hence, 
 \begin{align*}
 \env(Y)= &T(\env(Z))= \conj{T(B(f_1,\dots, f_n))}{f_1,\dots, f_n\in Z, B\text{ Borel }}=\\
 =& 
 g\cdot\conj{B(g_1/g,\dots g_n/g)}{g_1\dots, g_n\in Y, B\text{ Borel}}= g \cdot L_p(\supp g, \Sig_{Y/g})= \\
 =& g \cdot L_p([0,1],\Sig_{Y/g})=g \cdot \env_\mr{c}(Y/g)=g \cdot \env(Y/g).
 \end{align*}
%
%
%
%
\end{proof}

%
%
%

Finally we may observe the following, as a consequence of the description by Peller of the WOT-closure of the isometry group in separable r.i. spaces on $[0,1]$ different from the $L_p$'s. Recall that a contractive map on a r.i. space on $[0,1]$ is absolute if it defines a contraction on $L_1$ and on $L_\infty$.

\begin{theorem}   The $L_p$'s, $1 < p<+\infty$ are the unique reflexive r.i. spaces on $[0,1]$ for which all $1$-complemented subspaces ($1$-dimensional subspaces is enough) are envelopes. 
\end{theorem}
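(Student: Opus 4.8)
The plan is to prove both implications, noting first that the parenthetical reduction to one‑dimensional subspaces is automatic: for any nonzero $f\in X$ a Hahn--Banach norming functional $\phi$ of $f$ gives the contractive projection $x\mapsto \phi(x)f/\nrm{f}$, so every one‑dimensional subspace of any Banach space is $1$-complemented. For the easy direction, if $X=L_p$ with $1<p<\infty$ and $Y$ is a $1$-complemented subspace, then by the fact established above that $\env(Y)=\env_\mr{min}(Y)$ for every subspace $Y$ of $L_p$, together with the trivial observation that a $1$-complemented subspace is the smallest $1$-complemented subspace containing itself, one gets $\env(Y)=\env_\mr{min}(Y)=Y$; hence $Y$ is an envelope (for $p=2$ one may also invoke Example~\ref{hilbert} directly).

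For the converse I would argue by contraposition: assume $X$ is a reflexive r.i. space on $[0,1]$ which is not the space $L_p$ for any $p\in[1,\infty]$ (reflexivity already excludes $p=1$ and $p=\infty$), and exhibit a one‑dimensional — hence $1$-complemented — subspace that is not an envelope. The key input is Peller's description \cite{Peller} of the isometry group of an r.i. space distinct from all $L_p$'s: every surjective isometry $T$ of $X$ has the form $Tf=\ep\cdot(f\circ\sigma)$, with $\ep\colon[0,1]\to\{-1,1\}$ measurable and $\sigma$ a measure‑preserving automorphism of $([0,1],\lambda)$. Put $\tilde f:=\mathbbm 1_{[0,1/2]}+2\,\mathbbm 1_{(1/2,1]}\in L_\infty\subseteq X$ and $Y:=\R\tilde f$. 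Since $\tilde f$ and $\mathbbm 1_{[0,1/2]}$ are not proportional, $\mathbbm 1_{[0,1/2]}\notin Y$, so it suffices to show that nonetheless $\mathbbm 1_{[0,1/2]}\in\env(Y)$, which forces $\env(Y)\supsetneq Y$.

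To prove that, let $(T_i)_i$ be any net in $\isome(X)$ with $T_i\tilde f\to\tilde f$ in $X$, and write $T_if=\ep_i\cdot(f\circ\sigma_i)$. The norm of an r.i. space on a probability space dominates the $L_1$-norm up to a constant, so the inclusion $X\hookrightarrow L_1$ is bounded and $T_i\tilde f\to\tilde f$ in measure; taking absolute values, $\tilde f\circ\sigma_i=|T_i\tilde f|\to|\tilde f|=\tilde f$ in measure, and comparing the sets where these two‑valued functions equal $1$ yields $\lambda\big(\sigma_i^{-1}[0,1/2]\,\triangle\,[0,1/2]\big)\to 0$; dividing $T_i\tilde f$ by $\tilde f\circ\sigma_i\ge 1$ then gives $\ep_i\to 1$ in measure. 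Consequently $T_i\mathbbm 1_{[0,1/2]}=\ep_i\,\mathbbm 1_{\sigma_i^{-1}[0,1/2]}\to\mathbbm 1_{[0,1/2]}$ in measure, and since this net is uniformly bounded by $1$ while the norm of the reflexive r.i. space $X$ is order continuous, the convergence also holds in $X$. Hence $\mathbbm 1_{[0,1/2]}\in\env(Y)$, $Y$ is not an envelope, and $X$ does not have the stated property.

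The only genuine input here is Peller's structure theorem for $\isome(X)$, and that is exactly where the hypothesis $X\neq L_p$ is used; it is also precisely the reason the argument collapses for $X=L_p$, where the isometries are ``modular'', $Tf=h\cdot(f\circ\sigma)$ with $\sigma$ not necessarily measure preserving, so that $T_i\tilde f\to\tilde f$ no longer forces $\sigma_i^{-1}[0,1/2]\to[0,1/2]$ — consistently with $\env(\R\tilde f)=\R\tilde f$ there. Everything else (the bounded inclusion $X\hookrightarrow L_1$, the transfer of convergence in measure to the r.i. norm for uniformly bounded nets via order continuity, and the bookkeeping with level sets and signs) is routine. As a variant one could instead take $\tilde f(t)=t$: the same reasoning then forces $\sigma_i\to\mathrm{id}$ in measure and hence $T_ig\to g$ for every $g\in X$, showing that $\R\tilde f$ is even a full subspace; I would keep the two‑valued function, since it avoids the small approximation step needed to pass from $\sigma_i\to\mathrm{id}$ in measure to $g\circ\sigma_i\to g$.
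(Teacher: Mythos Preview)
Your argument is correct and complete, but it follows a different path from the paper's proof. The paper does not exhibit a concrete one-dimensional subspace with larger envelope; instead it argues structurally: by Proposition~\ref{uhiu4iur4rt4}, if every one-dimensional subspace is an envelope then each such subspace is the range of a contractive projection lying in $\overline{\mathrm{conv}(\isome(X))}^{\mathrm{WOT}}$; since for $X\neq L_p$ every isometry is \emph{absolute} (a contraction simultaneously on $L_1$ and $L_\infty$, by Zaidenberg), so is every element of this WOT-closed convex hull; the paper then shows, via a short argument with the $L_\infty$-norm, that no reflexive r.i.\ space can have every one-dimensional subspace complemented by an absolute projection, since this would force $\|u+v\|=\max(\|u\|,\|v\|)$ for disjointly supported $u,v\in L_\infty$ and hence produce a copy of $\ell_\infty$. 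Your approach is more hands-on: you use the full structure $Tf=\varepsilon\cdot(f\circ\sigma)$ of the isometries (this is Zaidenberg's theorem rather than Peller's, whose cited result concerns the WOT-closure of $\isome(L_p)$) and compute directly that $\mathbbm 1_{[0,1/2]}\in\env(\R\tilde f)\setminus\R\tilde f$. Your route is more elementary---it bypasses the JdLG/WOT machinery and the notion of absolute projection---and it yields an explicit witness; the paper's route, on the other hand, isolates an interesting intermediate statement (no one-dimensional subspace is the range of an absolute projection) and ties the result more tightly to the general theory developed earlier.
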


\begin{proof}  Let $X$ be a reflexive   r.i. space on $[0,1]$. In particular, $X$ is separable.  The assertion that all $1$-dimensional subspaces are envelopes implies, by  Proposition \ref{uhiu4iur4rt4}, that any $1$-dimensional subspace is the range of a contractive projection belonging to the WOT-closure of the convex hull of the isometry group. On the other hand,  it is a classical result (see \cite[Theorem 2]{Zaidenberg}) that if  $X$ is different from one of the $L_p$'s, then it follows that 
every isometry of $X$ is absolute. Since absolute contractions are closed under convex combinations and WOT-limits, it follows that under our hypothesis every 1-dimensional subspace of $X$ is 1-complemented by an absolute contractive projection, and we are going to see (probably a well-known result) in the next claim that this is impossible. 


\clam
Every reflexive r.i. space on $[0,1]$ admits a $1$-dimensional subspace which is not the range of an absolute projection.
\fclam
\prucl
Assume otherwise.  We are going to see that if  $u$ and $v$ are disjointly supported elements of $L_\infty$ and $\|u\|_\infty>\|v\|_\infty$, then $\|u+v\|=\|u\|$. Since $\|v\| \leq \|u+v\|$ this implies that actually
$\|u+v\|=\max(\|u\|, \|v\|)$; by induction we would find an isometric copy of $\ell_\infty$ in $X$,  which is impossible by hypothesis, contradiction. Let us see that $\nrm{u+v}=\nrm{u}$. We may assume without loss of generality that $x_0:=u+v$ has norm $1$ in $X$. Let
$P$ define an absolute projection with range equal to the span of $x_0$.
Since $P$ is a rank $1$ projection on $X$, it must satisfy the formula
$P(x)=\langle\phi_0,x\rangle x_0$ where $\phi_0 \in X^*$ and
 $\phi_0(x_0)=\nrm{\phi_0}=1$. Since $P$ is also a contraction in $L_\infty$, let us define $\ga: L_\infty \to \R$ by $\ga(x):= \nrm{x}_\infty-\langle \phi_0, x\rangle \nrm{x_0}_\infty $, and let us observe that 
$$\ga(x) = \nrm{x}_\infty \pm  \nrm{P(x)}_\infty \ge 0.$$
We see now that $\phi_0$ and $v$ are disjointly supported: Let $h:=\mathbbm 1_A$ where $A \subseteq \supp v$. Note that for $|t|$ small enough one has that $\nrm{x_0+th}_\infty=\nrm{x_0}_\infty$, hence 
$$\ga(x_0+th)=\nrm{x_0+th}_\infty-(1+t\langle\phi_0,h\rangle)\nrm{x_0}_\infty=-t\nrm{x_0}_\infty \langle\phi_0,h\rangle.$$ 
Since $\gamma$ is non-negative we deduce that $\phi_0(h)=0$, and this implies that $\supp(\phi_0) \cap \supp(v) =\emptyset$ and therefore
$$\phi_0(u)=\phi_0(x_0),
$$ so $\phi_0$ also norms $u$ and $\|x_0\|=\|u\|$.
\fprucl
\end{proof}

We do not know whether the minimal and  isometric envelopes coincide for an arbitrary reflexive Fraïssé or even \auh Banach space. Going further, it would be interesting to study the relationship between the envelopes in non-reflexive Fraïssé spaces, for example in the Gurarij space, or even in quasi-Banach spaces, such as  $L_p[0,1]$ for $0<p<1$.



\subsection{Full copies of $L_q$-spaces inside $L_p$}
We study the envelopes of $L_q$-spaces inside $L_p$, obtaining the following.

\begin{theorem}\label{3p4jroijrioewrje} Assume that $1 \leq p<+\infty$ is not even.
\begin{enumerate}[\rm(1)]
\item the envelope of any Hilbertian subspace of $L_p$ of dimension at least $2$ is isometric to $L_p$.

\item
If $q$ satisfies $1 \leq p \leq q \leq 2$, then the envelopes of $L_q$ and $\ell_q$  inside $L_p$ are isometric to $L_p$.
\end{enumerate} 
Consequently, 
\begin{enumerate}[\rm(1)]\addtocounter{enumi}{2}
    
    \item  $L_p$ admits  full copies of $\ell_2^n$, $n\geq 2$,  $\ell_2$,  and of  $L_q$ and  $\ell_q$ provided that $1 \leq p \leq q \leq 2$.

\end{enumerate}
 
\end{theorem}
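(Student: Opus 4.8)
The plan is to deduce everything from the envelope computations that precede the statement together with the isometric classification of subspaces of $L_p$ spanned by i.i.d. or stable random variables. First I would treat (1). Let $H$ be a Hilbertian subspace of $L_p$ of dimension $\geq 2$; by the standard representation, $H$ may be taken to be generated by i.i.d. Gaussian variables, so $H$ is unital in the sense that it contains a function of full support which is a $p$-stable (here Gaussian) variable spanning a line on which all translations and scalings act. The key point is that the conditional envelope $\env_\mr{c}(H/g)$, where $g\in H$ has full support, is $L_p([0,1],\Sig_{H/g})$, and that $\Sig_{H/g}$ — the $\sigma$-algebra generated by ratios of Gaussians — is the full Borel algebra of $[0,1]$ up to measure isomorphism, since e.g. the ratio of two independent Gaussians already generates a non-atomic $\sigma$-algebra isomorphic to $\mc B([0,1])$. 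Hence by the last Proposition (the partial extension of Theorem \ref{62}, item (2), valid since $p$ is not even), $\env(H)=g\cdot L_p([0,1],\mc B([0,1]))$, which, multiplication by the everywhere-nonzero $g$ being an isometry of $L_p$ onto itself composed with the natural density change, is isometric to $L_p([0,1])=L_p$. This proves (1), including the finite-dimensional case $H=\ell_2^n$, $n\geq 2$, where the same computation with $n$ i.i.d. Gaussians applies verbatim since $n\geq 2$ already gives a ratio generating the full algebra.

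For (2) the argument is identical once one knows two facts: first, that for $1\le p\le q\le 2$ both $\ell_q$ and $L_q$ embed isometrically into $L_p$, realized via sequences of independent $q$-stable random variables (the classical representation); and second, that such a copy may be chosen containing a function $g$ of full support — indeed a single $q$-stable variable has full support — so that the previous Proposition applies and $\env(\ell_q)=g\cdot L_p([0,1],\Sig_{\ell_q/g})$, similarly for $L_q$. It then remains to check that the $\sigma$-algebra generated by the ratios of the coordinate $q$-stable variables (equivalently, by all the Borel functions of the coordinates divided by $g$) is again, up to measure isomorphism, all of $\mc B([0,1])$; this is where a short measure-theoretic verification is needed, but it reduces to the non-atomicity of the distribution of a ratio of two independent symmetric $q$-stable variables, which is classical. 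Granting this, $\env(\ell_q)\cong\env(L_q)\cong L_p$.

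Finally, (3) is an immediate corollary: a subspace $Y$ with $\env(Y)$ isometric to $L_p$ is by definition a full copy of $Y$ inside (a space isometric to) $L_p$; more precisely, the isometry $\env(Y)\to L_p$ carries $Y$ to an isometric copy $Y'$ with $\env(Y')=L_p$, i.e.\ $Y'$ is a full subspace of $L_p$. Applying this to $Y=\ell_2^n$ ($n\geq 2$), $Y=\ell_2$, $Y=L_q$ and $Y=\ell_q$ ($1\le p\le q\le 2$) gives the asserted full copies.

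\emph{Main obstacle.} The non-trivial step is not the envelope formalism — that is entirely handled by the preceding Proposition and by Theorem \ref{62} — but the verification that, after quotienting by a full-support generator $g$, the $\sigma$-algebra $\Sig_{Y/g}$ generated by the remaining generators is the whole Borel algebra of $[0,1]$ (equivalently, that $\env_\mr{c}(Y/g)=L_p([0,1])$ rather than some proper $L_p([0,1],\Sig)$). For the Gaussian/Hilbertian case this is the statement that ratios of independent Gaussians generate a non-atomic, hence maximal, $\sigma$-algebra, and for the $q$-stable case the analogous statement about ratios of independent symmetric $q$-stable variables; both are standard but must be stated carefully, and one must make sure dimension $2$ (rather than $1$) is genuinely needed, which is why the hypotheses in (1) and (3) exclude the $1$-dimensional case.
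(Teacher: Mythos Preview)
Your approach is valid but differs substantially from the paper's. The paper does not compute $\Sigma_{Y/g}$ explicitly via distributions of ratios of Gaussian or $q$-stable variables. Instead, after reducing to $Y$ unital (so that $\env(Y)=L_p([0,1],\Sigma_Y)$ by Theorem~\ref{62}), it argues by contradiction: if $\Sigma_Y$ had an atom, then $\env(Y)$ would split as $L_p(A_1,\Sigma_1)\oplus_p L_p(A_2,\Sigma_2)$ with $\Sigma_2$ atomic nonempty, and by Banach--Lamperti every isometry of $\env(Y)$ respects this splitting. Since $L_p$ is \auh, each $t\in\isome(Y)$ extends uniquely to $\env(Y)$ (Proposition~\ref{extisometry}), and so the norm $\|i_2 y\|$ of the atomic component is constant along $\isome(Y)$-orbits. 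This yields a contradiction under either of two abstract hypotheses on $Y$: (i) $\isome(Y)$ acts almost transitively on $S_Y$ and $Y$ does not embed isometrically into $\ell_p$ (this covers $\ell_2^n$, $\ell_2$, $L_q$); or (ii) $Y$ has a $1$-symmetric basis $(e_n)$ with $\inf_n n^{-1/p}\|\sum_{i\le n}e_i\|_Y=0$ (this covers $\ell_q$ for $p<q\le 2$, via a gliding-hump argument in the atomic piece).

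Your concrete route---verifying directly that a single ratio $g_2/g_1$ of independent Gaussians (resp.\ $q$-stable variables) has nonatomic law and hence generates an atomless sub-$\sigma$-algebra---is correct in spirit, but two points deserve tightening. First, your phrase ``the full Borel algebra of $[0,1]$ up to measure isomorphism'' is misleading: $\Sigma_{H/g}$ is typically a \emph{proper} sub-$\sigma$-algebra of $\mc B([0,1])$ (for $\dim H=2$ it is generated by a single real-valued random variable); what you need, and what the ratio argument actually gives, is that it is \emph{atomless}, whence $L_p([0,1],\Sigma_{H/g})\cong L_p$. Second, for $L_q$ you invoke ``sequences of independent $q$-stable random variables'', but those realise $\ell_q$, not $L_q$; you need a $q$-stable process, or else a separate argument. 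The paper's hypothesis (i) handles $L_q$ cleanly via its almost transitivity, with no distributional computation.

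The paper's argument is more uniform and avoids any probability theory beyond Banach--Lamperti; yours is more hands-on and ties the result directly to the stable-variable realisation of the embeddings, which is illuminating in its own right.
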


\begin{proof} In fact, we have the following formally stronger statement: Assume $1 \leq p <+\infty$, $p$ not even and let $Y$ be a subspace of $L_p$ such that one of the two conditions hold:
\begin{enumerate}[i)]
\item the group $\isome(Y)$ acts almost transitively on $S_Y$ and $Y$ does not embed isometrically into $\ell_p$;
\item $Y$ has a $1$-symmetric Schauder basis $(e_n)$ such that
$\inf_n \frac{\| \sum_{i=1}^n e_i\|_Y}{n^{1/p}}=0$
\end{enumerate}
Then the envelope of $Y$ is isometric to $L_p$. In particular some isometric copy of $Y$ has full envelope.

Its proof goes as follows.  
Composing with an isometric embedding of $Y$ into $L_p$ sending some vector of full support in $Y$ to $1$, we may assume $Y$ is unital, hence $\env(Y)=L_p([0,1],\Sig_Y)$. The proof is finished once we argue that $\Sig_Y$ does not have atoms. Suppose otherwise. Then $\env(Y)= L_p(A_1,\Sig_1)\oplus_p L_p[A_2,\Sig_2)$ where $\Sig_1, \Sig_2\con \Sig_Y$   are atomless and atomic $\sig$-algebras in $A_1$ and $A_2$, respectively, $A_1\sqcup A_2=[0,1]$, and $\Sig_2$ is non-empty. 

 We denote by $i$ the inclusion map of $Y$ into $L_p(A_1,\Sig_1) \oplus_p L_p(A_2,\Sig_2)$, and 
choosing $i_1=0$ if necessary, we write $i=(i_1,i_2)$ with respect to the decomposition $L_p(A_1,\Sig_1) \oplus_p L_p(A_2,\Sig_2)$, that is $i_j(f)= f\rest A_j$ for every $f\in Y$. Note  that by the Banach-Lamperti formula for isometries of $L_p$-spaces, all isometries on this space can be written as $T=(T_1,T_2)$ where each $T_1$  is an isometry on $L_p(A_i,\Sig_i)$ for $i=1,2$.  Observe that $i_2\neq 0$, because otherwise $A_2=\emptyset$.
\clam
The norm of $i_2 y$ is constant on   $\isome(Y)$-orbits. 
\fclam
\prucl
For suppose that $t\in \isome(Y)$ and $y\in Y$. Since $p$ is not even, we know that $L_p$ is \auh, and it follows from Proposition \ref{extisometry} that there is unique extension of $t$ to $T\in \isome(\env(Y))$. Then  $(i_1 t y, i_2  ty)=ity = Tiy=(T_1 i_1 y, T_2 i_2 y)$,  and consequently $\nrm{i_2 t y}=\nrm{T_2 i_2 y}=\nrm{i_2 y}$.
\fprucl
 Suppose that {\rm i)} holds.   Then it follows from the previous claim that the norm of $i_2$ has constant value $k>0$ on $S_Y$ (because $i_2\neq 0$).  Hence,  $i_2/k$ is an isometric embedding of $Y$ into $L_p[A_2,\Sig_2)$, and since $\Sig_2$ is atomic, $L_p(A_2,\Sig_2)$ is isometric to $\ell_p(I)$, where $I$ is the (countable) set of atoms of $\Sig_2$, contradicting the hypothesis. 
 
 Suppose that {\rm ii)} holds.  Since $(e_n)_n$ is a 1-symmetric basis of $Y$, the isometries of $Y$ act transitively on the basis, and by the previous claim this implies that the norm of $i_2 e_n$ for $n \in \N$ is constant, with value $c$. As before, we have that $c>0$.  By $w^*$-compactness there exists $u \in \ell_p^{**}$ and $N$  an infinite subset of $\N$ such that $(i_2 e_n)_{n \in N} \rightarrow^{w^*} u$. 
We claim that $u=0$.
To see this, note that for any normalized $\phi$ in $\ell_p^*$, and any $n \in \N$ we can find a finite $F \subseteq N$ of cardinality $n$ such that
$\phi(\sum_{j \in F} i_2 e_j) \geq \frac{1}{2} n\phi(u)$. It follows
that
$n \phi(u) \leq 2 \|i_2(\sum_{j \in F} e_i)\| \leq 2\|i_2\| \|\sum_{j=1}^n e_j\|_Y$. The estimate in {\rm ii)} then implies that $\phi(u)=0$ and therefore the claim that $u=0$.
Now since $(i_2 e_n)_{n \in N}$ tends weakly to $0$ in $\ell_p$, we can, by a gliding hump argument and passing to a subset of $N$, assume that the sequence $(i_2 e_n)_{n \in N}$ is almost sucessive in the sense that
$\|\sum_{j \in F} i_2 e_j\| \geq \frac{1}{2}(\sum_{j \in F}\|i_2 e_j\|^p)^{1/p}$ whenever $F \subseteq N$ is finite.
Therefore
$c|F|^{1/p} \leq 2 \|i_2\| \|\sum_{j \in F} e_j\|_Y$, contradicting ii) when $|F|$ is large enough.
\end{proof}

\subsection{On $g$-embeddings of $\ell_2$ into $L_p$ and the spaces $L_p/\ell_2$ }
Knowing that $\ell_2$ admits a full copy in $L_p$, we have an associated exact sequence
$$0 \rightarrow \ell_2 \rightarrow L_p \rightarrow L_p/\ell_2 \rightarrow 0,$$ to which
we may apply  Corollary \ref{oiu34jt433} and the  commentary thereafter. 
\begin{definition}
[The full quotient space $L_p/\ell_2$] Let  $1\le p<\infty$, $p\notin 2\N$.  The quotient $L_p/\ell_2$ of $L_p$ by any full copy of $\ell_2$ is isometrically unique. We call this space the full quotient of $L_p$ by $\ell_2$.\end{definition}

\begin{proposition}
For each $1\le p<\infty$, $p\notin 2\N +4$, there is a linear $g$-embedding (see definition just before Definition \ref{extisometry}) of $U(\ell_2)$ into $\isome(L_p)$. It is defined by extension of unitaries on any fixed full copy of $\ell_2$ inside $L_p$, to isometries on $L_p$.
\end{proposition}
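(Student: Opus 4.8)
The plan is to read this off almost immediately from two facts already established: the existence of a full copy of the Hilbert space inside $L_p$ (Theorem \ref{3p4jroijrioewrje}), and the extension of isometries in AUH spaces (Proposition \ref{extisometry}).

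First I would fix a full copy of $\ell_2$ inside $L_p$, i.e. a subspace $H\con L_p$ which is linearly isometric to $\ell_2$ and satisfies $\env(H)=L_p$. For $p\neq 2$ such an $H$ is provided by Theorem \ref{3p4jroijrioewrje}: since $p\notin 2\N+4=\{4,6,8,\dots\}$ and $p\neq 2$, the exponent $p$ is not even, so part (1) applies (envelopes of Hilbertian subspaces of dimension $\ge 2$ are isometric to $L_p$) and part (3) yields a genuine full copy. The remaining value $p=2$ is trivial, taking $H=L_2$ itself, since $\env(L_2)=L_2$ and $\isome(L_2)$ is already $U(\ell_2)$.

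Next, since $p\notin 2\N+4$ the space $L_p$ is approximately ultrahomogeneous (Lusky's theorem, as recalled in the introduction), so Proposition \ref{extisometry} applies with $X=L_p$ and $Y=H$. It gives that every surjective isometry $t$ of $H$ extends uniquely to a surjective isometry $\tilde t$ of $\env(H)=L_p$, that the assignment $t\mapsto\tilde t$ is a topological group embedding of $\isome(H)$ into $\isome(\env(H))=\isome(L_p)$ with $\tilde t\rest H=t$, and hence that $H\con L_p$ is a linear $g$-embedding. Finally, fixing a linear isometry $\iota\colon\ell_2\to H$, conjugation $u\mapsto \iota u\iota^{-1}$ identifies $U(\ell_2)=\isome(\ell_2)$ with $\isome(H)$ as topological groups; composing with $t\mapsto\tilde t$ produces the desired topological group embedding $e\colon U(\ell_2)\to\isome(L_p)$, with $e(u)$ the unique isometry of $L_p$ whose restriction to $H$ is the unitary $\iota u\iota^{-1}$ — that is, $e$ is exactly "extension of unitaries on the fixed full copy $H$". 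Independence of the choice of full copy then follows from the isometric uniqueness of the full position (Corollary \ref{oiu34jt433} and the subsequent discussion of isometric positions), though it is not needed for the existence statement.

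I do not expect any real obstacle at this stage: all the work is already absorbed into Theorem \ref{3p4jroijrioewrje} (constructing the full copy, where one must rule out atoms in $\Sigma_Y$) and into Proposition \ref{extisometry} together with Corollary \ref{oij4io3tbjjittre} (the SOT–SOT continuous extension of isometries). The only points needing care are the bookkeeping for $p=2$ and the scalar-field convention of writing $U(\ell_2)$ for the isometry group of $\ell_2$.
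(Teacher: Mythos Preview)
Your proposal is correct and follows essentially the same approach as the paper: the paper's proof consists of a single sentence citing Theorem \ref{3p4jroijrioewrje} and Proposition \ref{extisometry}, which is exactly the two-ingredient argument you give. Your version is simply more detailed, and you rightly single out the case $p=2$ (which the paper glosses over but which is trivial since then $L_2$ itself is Hilbert).
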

\begin{proof}
This is a  consequence of Theorem \ref{3p4jroijrioewrje} and of Proposition \ref{extisometry}. 
\end{proof}

Many  questions  seem to remain open about concrete representations of full copies of $\ell_2$ inside $L_p$, about the exact sequence
$$0 \rightarrow \ell_2 \rightarrow L_p \rightarrow L_p/\ell_2 \rightarrow 0$$
associated to the linear $g$-embedding of $\ell_2$ inside $L_p$, for appropriate values of $p$,  about the associated full quotient of $L_p$ by $\ell_2$, or about  the topological embedding of $U(H)$
 into $\isome(L_p)$ from a concrete point of view,  . We shall not  address them in this paper but we do make a few initial comments below.

If $1<p<+\infty$, then the exact sequence above splits, by complementation of $\ell_2$ inside $L_p$. This means that $L_p/\ell_2$ is isomorphic  to $L_p$ (since it is isomorphic to a complemented subspace of $L_p$, and since $L_p$ is primary). 
On the other hand when $p=1$, then
$$0 \rightarrow \ell_2 \rightarrow L_1 \rightarrow L_1/\ell_2 \rightarrow 0$$
does not split, and the isomorphic type of the full quotient $L_1/\ell_2$ seems to be unknown.

Many other homological questions arise:
 what description can be made of the induced action of $U(\ell_2)$ on
$L_p/\ell_2$ (which is actually an action by bounded isomorphisms on $L_p$ when $p>1$)?
   for $1<p<+\infty$, does $\ell_2$ admit an $U(\ell_2)$-invariant summand inside $L_p$? if it does not, then what can be said of the quasilinear map $\Omega$ associated to the exact sequence and of the commutator estimates of 
$\Omega$ with the action of the unitary group on $L_p$? How may the dual exact sequence  be described? For details we refer to \cite{CG}, for the general theory of exact sequences of Banach spaces, and to \cite{CF}, for a theory of their compatibility with group actions.

Another direction which seems worthwhile to explore is the relation between the isometric envelopes of the Hilbert space in an $L_p$-space and Gaussian Hilbert spaces. See in particular the extension Theorem 4.12 in
\cite{Jansson}. Note however that the theory of Gaussian Hilbert spaces seems to be limited to the context of $L_p$-spaces and does not seem to generalize to the \auh or Fra\"iss\'e situation.



\section{Complementations in \auh spaces: numerical estimates}

The aim of this section is to establish relations between the constants of complementation of euclidean or hilbertian copies, the type and cotype of the space, and the AUH or Fra\"iss\'e property. Recall that a Fra\"iss\'e Banach space $X$ always contain isometric copies of the Hilbert space. We shall see that the constant of complementation of such copies in $X$ determines for which values of $p$ the space $X$ is possibly isometric to $L_p$. We shall also relate these values to the type and cotype of the space.

In consequence we shall obtain ``local versions" of the fact that $L_p$-spaces are Fra\"iss\'e only if $p \notin 2\N+4$, and reinforce the conjecture that those spaces, together with the Gurarij space, are the only separable Fra\"iss\'e spaces. 

\subsection{Complementations in \auh spaces}
Our first result relates to constants of complementation in \auh-spaces. This extends some results known for $L_p$-spaces, but our point is to obtain these only through properties of the isometry group. 

\begin{proposition}\label{prpeprpep} 
Let $X$ be \auh and $1$-complemented in its bidual. Then any isometric embedding of a subspace $Y$ of $X$ into $X$ extends to a contraction on $X$.
\end{proposition}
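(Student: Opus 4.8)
The plan is to construct the extension as a weak$^*$-limit of surjective isometries supplied by the \auh property, and then bring it back into $X$ by the norm-one projection from the bidual.

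First I would fix a norm-one projection $P\colon X^{**}\to X$, which exists by hypothesis and restricts to the identity on the canonical image of $X$ inside $X^{**}$. Next, index everything by the directed set of pairs $(F,n)$, where $F$ ranges over the finite-dimensional subspaces of $Y$ and $n\in\N$, ordered by $(F,n)\le (F',n')$ iff $F\subseteq F'$ and $n\le n'$. For each such pair, $t\rest F$ is an isometric embedding of the finite-dimensional space $F$ into $X$, so the \auh property gives $t\rest F\in\overline{\isome(X)\rest F}^\mr{SOT}$; since $F$ is finite-dimensional this produces a surjective isometry $g_{F,n}\in\isome(X)$ with $\|g_{F,n}\rest F-t\rest F\|\le 1/n$ (operator norm on $F$).

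Then I would view each $g_{F,n}$ as a point of the product $\prod_{x\in X}B_{X^{**}}(0,\|x\|)$, which is compact in the product topology, i.e. the topology of pointwise weak$^*$ convergence, and extract a convergent subnet with limit $T_0\colon X\to X^{**}$. A pointwise weak$^*$ limit of linear contractions is linear (joint weak$^*$-continuity of addition, weak$^*$-continuity of scalar multiplication) and contractive (weak$^*$-lower semicontinuity of the norm), so $T_0$ is a contraction. For $y\in Y$ one argues convergence on $Y$: given $m\in\N$, every index of the subnet past $(\mr{span}(y),m)$ satisfies $\|g_{F,n}(y)-t(y)\|\le\|y\|/n\le\|y\|/m$, so $g_{F,n}(y)\to t(y)$ in norm along the subnet, hence also weak$^*$; therefore $T_0(y)=t(y)$ in $X^{**}$. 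Finally I would set $T:=P\circ T_0\colon X\to X$; then $\|T\|\le 1$, and for $y\in Y$ we get $T(y)=P(t(y))=t(y)$ because $t(y)\in X$ and $P$ fixes $X$ pointwise, so $T$ is the desired contractive extension of $t$.

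The only genuinely delicate point is the passage to the limit: the isometry group of $X$ need not be compact in any topology on $X$ itself, which is precisely why one lifts to the bidual, where the ball is weak$^*$-compact, and then uses $1$-complementation to descend again. Everything else — linearity, contractivity, and agreement with $t$ on $Y$ — is routine, the last of these relying on the $1/n$ control built into the choice of $g_{F,n}$ together with cofinality of the chosen subnet.
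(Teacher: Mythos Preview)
Your proof is correct and follows essentially the same approach as the paper: approximate $t$ on finite-dimensional pieces by global isometries via the \auh property, pass to a weak$^*$-limit in $X^{**}$, and project back with $P$. The only cosmetic difference is that the paper takes the limit along an ultrafilter on the index set rather than extracting a subnet via Tychonoff, which amounts to the same thing.
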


\begin{proof}  Let $t$ be an embedding of a subspace $Y$ of $X$.
For any $F$ finite-dimensional subspace of $Y$ and $\varepsilon>0$, there is a map $T_{F,\varepsilon} \in \isome(X)$ such that
$\|(t-T_{F,\varepsilon}){\rest F}\| \leq \varepsilon$. Using a non-trivial ultrafilter refining the natural filter on the set of
$(F,\varepsilon)$, we define a map $T: X \rightarrow X^{**}$
by $Tx=w^*-\lim_U T_{F,\varepsilon}x$. If $P$ is a norm $1$ projection
from $X^{**}$ onto $X$ then $PT$ defines a contraction on $X$ extending $t$.
\end{proof}

\begin{proposition}\label{newproj}  Let $X$ be \auh, and let
$Y$ be $C$-complemented in $X$ for some $C \geq 1$. Then  any isometric copy of $Y$ inside $X$ is also $C$-complemented, as soon as one of the following conditions holds:
\begin{enumerate}[\rm(1)]
\item $X$ is $1$-complemented in its bidual, or
\item $Y$ is finite dimensional, or
\item  $Y$ is $1$-complemented in its bidual and can be written as the closure of an increasing union of a net $(F_i)_i$ of finite dimensional subspaces such that $F_i$ is $1$-complemented in $Y$ for every $i$.
\end{enumerate}
\end{proposition}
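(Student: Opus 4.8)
The three cases require a common idea: approximate a given isometric copy $t(Y)$ of $Y$ by images of $Y$ under genuine surjective isometries of $X$, transfer the complementation constant along these approximations, and then take a limit. The key input is the \auh{} property, which gives, for every finite-dimensional $F\subseteq Y$ and every $\varepsilon>0$, a surjective isometry $T_{F,\varepsilon}\in\isome(X)$ with $\|(t-T_{F,\varepsilon})\restriction F\|\le\varepsilon$. Throughout, fix a projection $P_0\colon X\to Y$ with $\|P_0\|\le C$.

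First I would treat case (1). Here the argument is a direct variant of Proposition \ref{prpeprpep}: by \auh{}, the embedding $t\colon Y\to X$ extends to a contraction $\widehat t\colon X\to X$ (this uses $1$-complementation of $X$ in its bidual). Symmetrically, the partial inverse $t^{-1}\colon t(Y)\to Y$ is an isometric embedding of the subspace $t(Y)$ into $X$, so it too extends to a contraction $S\colon X\to X$ with $S\restriction t(Y)=t^{-1}$. Then $Q:=\widehat t\circ P_0\circ S$ is an operator on $X$ with $\|Q\|\le\|P_0\|\le C$, and for $z=t(y)\in t(Y)$ one has $Q(z)=\widehat t(P_0(t^{-1}z))=\widehat t(t^{-1}z)=t(t^{-1}z)=z$, since $\widehat t$ extends $t$. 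Hence $Q$ is a projection onto $t(Y)$ of norm $\le C$, as desired.

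Case (2) is the cleanest and is the likely model for the others. With $Y=F$ finite dimensional, pick $T_n\in\isome(X)$ with $\|(t-T_n)\restriction Y\|\to0$. Put $Q_n:=T_n\circ P_0\circ T_n^{-1}$, a projection onto $T_n(Y)$ with $\|Q_n\|=\|P_0\|\le C$. The point is that $T_n(Y)\to t(Y)$ in the sense that each $z\in t(Y)$ is a limit of points $T_n(y_n)$ with $y_n\to t^{-1}z$ in $Y$; since $\dim Y<\infty$ this convergence is uniform on bounded sets. A standard perturbation/compactness argument (finite-dimensionality of $Y$ makes the space of rank-$\le\dim Y$ operators behave well) then lets me pass to a subsequence so that $Q_n$ converges, say in the strong operator topology after composing with a near-identity correction, to a projection $Q$ onto $t(Y)$ with $\|Q\|\le C$. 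Concretely: let $z_1,\dots,z_k$ be a basis of $t(Y)$ with biorthogonal functionals $z_j^*$ on $t(Y)$, extend the $z_j^*$ to functionals on $X$ of controlled norm, set $R_n:=\sum_j z_j^*(T_n(e_j))\,(\text{something})$ — more cleanly, define $Q:=\lim_n T_n P_0 T_n^{-1}$ where the limit exists along an ultrafilter in the (compact, by finite rank) relevant operator space, verify $Q$ is idempotent with range exactly $t(Y)$ by checking on the basis, and bound $\|Q\|\le C$ by lower semicontinuity of the norm.

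Case (3) is the one I expect to be the main obstacle, because one must combine an increasing-union/limit argument over the $F_i$ with a bidual-complementation argument for each piece, and keep the constant $C$ under control throughout. The plan: write $t(Y)$ as the closure of $\bigcup_i t(F_i)$; for each $i$ apply case (2)-type reasoning to the finite-dimensional $F_i$, but using instead of $P_0$ a norm-$1$ projection $\pi_i\colon Y\to F_i$ (available by hypothesis) followed by $P_0$, to get projections $Q_i$ onto $t(F_i)$ with $\|Q_i\|\le C$. These $Q_i$ need not be coherent, so I would take a weak (or weak$^*$, exploiting $1$-complementation of $Y$ in its bidual) cluster point $Q$ of $(Q_i)_i$ along an ultrafilter; the $1$-complementation of $Y$ in $Y^{**}$ is exactly what is needed to ensure the weak$^*$-limit lands back inside $t(Y)$ rather than in $t(Y)^{**}$. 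One checks $Qz=z$ for $z\in\bigcup_i t(F_i)$ (any such $z$ lies in $t(F_i)$ for all large $i$, and $Q_i z=z$ there), hence $Qz=z$ on all of $t(Y)$ by density and continuity; and $\|Q\|\le C$ by weak lower semicontinuity. The delicate points are (a) justifying that the cluster-point construction produces an operator with range contained in $X$ and equal to $t(Y)$ — this is where hypothesis (3)'s bidual condition on $Y$ enters — and (b) ensuring the $Q_i$ are uniformly bounded by $C$ and not merely by some constant depending on $i$, which is why the intermediate $F_i\subseteq Y$ must be $1$-complemented rather than merely complemented.
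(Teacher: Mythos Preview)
Your overall strategy matches the paper's and is essentially correct in cases (1) and (3); the only real issue is the execution of case (2), where you circle around the clean step.

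\textbf{Case (1).} Correct, and the same as the paper, except that you do slightly more work than needed: since $P_0$ already has range $Y$, the composition $t\circ P_0\circ S$ makes sense without extending $t$ to a contraction $\widehat t$ on all of $X$. Your extension is harmless (because $\widehat t\circ P_0=t\circ P_0$), but superfluous.

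\textbf{Case (2).} You have the right idea---conjugate $P_0$ by approximating isometries and pass to a limit---but your writeup becomes vague exactly where the point lies (``$R_n:=\sum_j z_j^*(T_n(e_j))\,(\text{something})$''). The difficulty you are fighting is that $Q_n=T_nP_0T_n^{-1}$ projects onto $T_n(Y)$, not onto $t(Y)$, so you must correct the range before taking limits. The paper avoids this entirely by a classical perturbation step: since $Y$ is finite dimensional and $\|(t-T_\varepsilon)\restriction Y\|$ is small, one can modify $T_\varepsilon$ to a surjective $(1+\varepsilon)$-isomorphism $T_\varepsilon'$ of $X$ that \emph{extends $t$ exactly}. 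Then $T_\varepsilon' P_0 (T_\varepsilon')^{-1}$ is already a projection onto $t(Y)$, of norm at most $C(1+\varepsilon)^2$, and a weak cluster point as $\varepsilon\to 0$ gives the desired projection. This one-line perturbation replaces all of your range-correction/compactness argument.

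\textbf{Case (3).} This is essentially the paper's argument. A couple of small clarifications: you presumably mean $\pi_i\circ P_0$ (not ``$\pi_i$ followed by $P_0$''), giving $F_i$ a $C$-complementation in $X$; and the $1$-complementation of $t(Y)$ in its bidual follows from that of $Y$ via the isometry $t$. The paper phrases the limit as $px=w^*\text{-}\lim_{\mathcal U} p_i x\in (tY)^{**}$, then composes with a norm-$1$ projection $(tY)^{**}\to tY$, exactly as you describe.
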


\begin{proof} Let $Y$ be $C$-complemented by a projection $p$ and $t$ be an isometric embedding of $Y$ into $X$.
(1): Using Proposition \ref{prpeprpep}, $tY$ is complemented by $tpT'$, if $T'$ is a contraction on $X$ extending $t^{-1}$. (2): for any $\varepsilon>0$ we can find, by classical perturbation arguments, a surjective $1+\epsilon$-isometry $T_\varepsilon$ on $X$ extending $t$, and then 
$p_\varepsilon:=T_\varepsilon p T_\varepsilon^{-1}$ is a projection onto $tY$ of norm at most $C(1+\varepsilon)^2$. A compactness argument provides a projection onto $tY$ of norm at most $C$.
(3): Applying (2) we find a projection $p_i$ onto $tF_i$ of norm at most $C$ for each $i$. Using a non-trivial ultrafilter refining the net, we define a map $p: X \rightarrow (tY)^{**}$ by
$px=w^*-\lim_U p_i x$. If $P$ is a norm $1$ projection from $(tY)^{**}$ onto $tY$ then $Pp$ defines a projection onto $tY$ of norm at most $C$.
\end{proof}

Note that as a consequence of this and of the \auh properties of $L_p$-spaces, we recover a known result in the theory of $L_p$-spaces (\cite{ran02}), i.e. 
the above holds inside $L_p(0,1), 1 \leq p <+\infty, p \notin 2\N+4$.
It also proved in \cite{ran02} that the result does not hold for $p \in 2\N+4$, since in this case there is  a complemented subspace admitting an isometric uncomplemented copy. One may actually prove the following:

\begin{remark}\label{complu} No \auh space
can contain a complemented isometric copy of any $L_p$ with $p \in 2\N+4$. This follows from the existence of a constant $K$ and a sequence of pairwise isometric finite dimensional subspaces $F_n$ and $G_n$ of $L_p$ such that
$F_n$ is $K$-complementend and $G_n$ is not $n$-complemented in $L_p$, for each $n \in \N$. It is easy to see that this fact, together with item (2) of Proposition \ref{newproj}, is incompatible with complementation of a copy of $L_p$ inside $X$. The existence of $K, (F_n)$ and $(G_n)$ is based on the unconditionality of the subspaces used in \cite{ran02} and is detailed in the proof of Proposition 2.10 of \cite{FLMT}.  \end{remark}

\

 As for the Hilbert space $H$, a  classical result states that $U(H)$ is WOT-dense in ${\mathcal L}_1(H)$ (see for example \cite{Peller}).  This and  Proposition \ref{prpeprpep} gives us the following.
 
\begin{corollary} \label{oij3riojweiojeoijwer}   Assume $X$ is  \auh $1$-complemented in its bidual. Then every operator defined on  a hilbertian subspace of $X$ extends to an operator of same norm on $X$.
\end{corollary}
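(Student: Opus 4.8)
The plan is to realise an arbitrary contraction on the hilbertian subspace as a weak-operator limit of maps we already know how to extend, and then pass to the limit exactly as in the proof of Proposition \ref{prpeprpep}. So fix a hilbertian subspace $Y \con X$ and an operator $u$ on $Y$; after rescaling we may assume $\|u\| = 1$, i.e. $u \in \mathcal{L}_1(Y)$. The starting point is the classical fact recalled just before the statement: the unitary group $U(Y)$ is WOT-dense in $\mathcal{L}_1(Y)$ (for finite-dimensional $Y$ one replaces $U(Y)$ by $\overline{\mathrm{conv}}\,U(Y) = \mathcal{L}_1(Y)$, which is the Russo--Dye theorem). Thus there is a net $(w_i)_{i \in I}$, with $w_i \in \mathrm{conv}(U(Y))$, converging to $u$ in the weak operator topology.

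Next I would extend each approximant. Every $v \in U(Y)$ is in particular a surjective isometry of $Y$, hence an isometric embedding of $Y$ into $X$, so by Proposition \ref{prpeprpep} it extends to a contraction on $X$. Writing each $w_i$ as a finite convex combination of unitaries and averaging the corresponding extensions, we obtain contractions $T_i \in \mathcal{L}_1(X)$ with $T_i\rest Y = w_i$. Now one copies the compactness step of the proof of Proposition \ref{prpeprpep}: the ball of $\mathcal{B}(X, X^{**})$ is compact for the topology of pointwise weak$^*$ convergence, so fixing a non-trivial ultrafilter $\mathcal U$ on $I$, the formula $Tx := w^*\text{-}\lim_{\mathcal U} T_i x$ defines an operator $T \colon X \to X^{**}$ with $\|T\| \le 1$.

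It then remains to check that $T$ extends $u$ and to bring it back inside $X$. For $y \in Y$ one has $Ty = w^*\text{-}\lim_{\mathcal U}(T_i y) = w^*\text{-}\lim_{\mathcal U}(w_i y)$, and since $w_i \to u$ WOT the net $(w_i y)$ converges weakly in $Y$ to $uy$; because $X^*$ restricts onto $Y^*$ by Hahn--Banach, the weak topology of $Y$ is the restriction of that of $X$, so $(w_i y)$ converges weak$^*$ in $X^{**}$ to $uy \in X \con X^{**}$, whence $Ty = uy$. Composing with a norm-one projection $P \colon X^{**} \to X$ (which exists because $X$ is $1$-complemented in its bidual) yields $\tilde u := PT \colon X \to X$, a contraction with $\tilde u\rest Y = P(u(\cdot)) = u$; and since $\tilde u$ extends $u$, necessarily $\|\tilde u\| = \|u\| = 1$.

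There is no genuine obstacle here beyond the density input of the first paragraph: when $Y$ is finite-dimensional, $U(Y)$ is compact, hence WOT-closed and not dense in $\mathcal{L}_1(Y)$, which is precisely why one must pass through the convex hull (harmless, since each unitary is still extended individually and extensions are averaged). Everything after that is the same ultrafilter-and-bidual manoeuvre already carried out for Proposition \ref{prpeprpep}, so it should go through routinely.
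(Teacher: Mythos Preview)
Your proof is correct and follows essentially the same route as the paper: approximate the given contraction on $Y$ by unitaries (or convex combinations thereof), extend each of these to a contraction on $X$ via Proposition~\ref{prpeprpep}, take a weak$^*$ cluster point in $X^{**}$, and project back using the $1$-complementation of $X$ in its bidual. Your treatment is in fact slightly more careful than the paper's: the paper invokes the WOT-density of $U(Y)$ in $\mathcal{L}_1(Y)$ directly, which only holds for infinite-dimensional $Y$, whereas you correctly pass through $\mathrm{conv}(U(Y))$ (via Russo--Dye) to cover the finite-dimensional case as well; and you verify explicitly that the limit operator restricts to $u$ on $Y$, which the paper leaves implicit.
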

\begin{proof} For suppose that $Y\con X$ is hilbertian and $t:Y\to Y$ be a contraction of norm 1. We choose a net $(t_i)_i$ of isometries of $Y$ converging WOT to $T$. By Proposition \ref{prpeprpep}, we can extend each $t_i$ to norm one operators $T_i:X\to X$.  Let $U:X\to X^{**}$ be WOT-limit of $(T_i)_i$, and let $P:X^{**}\to X$ be a contractive projection. Then $P U$ has norm 1 and extends $t$.  
\end{proof}
Note that this holds in particular for
any Fra\"iss\'e space $1$-complemented in its bidual, and therefore for all the separable Fra\"iss\'e $L_p$-spaces.

A similar result seems possible  for spaces $X$  for which the unital algebra ${\mathcal L}(X)$ is {\em unitary}, that is,  when  the convex hull of its isometries is norm dense in ${\mathcal L}_1(X)$. 
 It is classical that ${\mathcal L}(H)$ is unitary in the complex case and the real case is due to Navarro-Pascual and Navarro \cite{NN} who actually prove that the infinite convex hull is equal to ${\mathcal L}_1(H)$.


\begin{definition}
For $X$ a Banach space, $1 \leq p \leq \infty$, and $n \in \N$, let us denote
$c_p^n(X) \in [1,+\infty]$  the infimum of  constants of complementation of an isometric copy of $\ell_p^n$ inside $X$,
and $c_p(X)$ the infimum  constants of complementation of an isometric copy of $L_p$ inside $X$. Let us also denote by $\fr(X)$ the set of $p$'s such that $\ell_p$ is finitely representable in $X$.  
\end{definition}

\begin{proposition}\label{complecomple}
Assume $X$ is Fra\"iss\'e. The following holds for every $p \in \fr(X), 1 \leq p<+\infty$:
\begin{enumerate}[\rm(1)]
\item  $c_p^n(X)$ is attained.
\item $c_p(X)$ is attained when it is finite.
\item We have $c_p(X)=\lim_n c_p^n(X)$.
\item If there is $C \geq 1$ such that for any $n$, and for any $\varepsilon>0$, $X$ contains a $C$-complemented $1+\epsilon$-copy of $\ell_p^n$, then $c_p(X) \leq C$.
\end{enumerate}
Furthermore
\begin{enumerate}[\rm(1)]\addtocounter{enumi}{4}
\item for fixed $n$, the map $p \mapsto c_p^n(X)$ is continuous on $\fr(X)$.      
\end{enumerate}

\end{proposition}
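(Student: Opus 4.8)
The plan is to prove items (1)--(5) in that order, relying on two facts about Fra\"iss\'e spaces. First, a Fra\"iss\'e space is AUH, so Proposition \ref{newproj} applies; in particular, any two isometric copies of the same finite-dimensional space, or of $L_p$ itself, inside $X$ have the same complementation constant. Second, since the age of a Fra\"iss\'e space is closed under the Banach--Mazur metric \cite{FLMT} and $p\in\fr(X)$ means that $\ell_p$, hence every $\ell_p^n$, is finitely representable in $X$, the space $X$ contains an isometric copy of $\ell_p^n$ for every $n$. Granting this, (1) is immediate: an isometric copy $F\cong\ell_p^n$ in $X$ has some complementation constant $\gamma$, and by Proposition \ref{newproj}(2) every isometric copy of $\ell_p^n$ is then $\gamma$-complemented, so $c_p^n(X)=\gamma$ is attained. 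For (2), realise $L_p[0,1]$ as the closure of the increasing union of the dyadic subspaces $V_k\cong\ell_p^{2^k}$, each $1$-complemented in $L_p$ by the conditional expectation $\mc E_k$, and recall that $L_p$ is $1$-complemented in its bidual (reflexivity for $1<p<\infty$; for $p=1$ the Yosida--Hewitt band projection). Thus $L_p$ satisfies the hypotheses of Proposition \ref{newproj}(3), so all isometric copies of $L_p$ in $X$ have the same complementation constant, and $c_p(X)$ is attained whenever it is finite.

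For (3), first note that $c_p^n(X)\le c_p^{n+1}(X)$ (a copy of $\ell_p^{n+1}$ contains a $1$-complemented copy of $\ell_p^n$) and $c_p^n(X)\le c_p(X)$ (an isometric copy of $L_p$ contains $1$-complemented copies of all $\ell_p^n$), so $L:=\lim_n c_p^n(X)\le c_p(X)$. For the converse, assume $L<\infty$. Using the back-and-forth/extension machinery of Fra\"iss\'e spaces \cite{FLMT}, build an increasing chain $V_1\subseteq V_2\subseteq\cdots$ of subspaces of $X$ with $V_k$ isometric to $\ell_p^{2^k}$ and each inclusion $V_k\hookrightarrow V_{k+1}$ conjugate to the dyadic Haar inclusion, so that $Z:=\overline{\bigcup_k V_k}$ is isometric to $L_p[0,1]$; the delicate point is that at each stage the desired inclusion can only be realised approximately inside $X$, so one must perturb the previously chosen $V_j$'s simultaneously and pass to a limit, as in the homogeneity/uniqueness proof for Fra\"iss\'e limits. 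By (1) each $V_k$ is $c_p^{2^k}(X)\le L$-complemented, say by a projection $P_k$ with $\|P_k\|\le L$, and $P_k$ fixes $V_j$ for all $j\le k$. Fixing a free ultrafilter $\mathcal{U}$ on $\N$ and setting $\Psi(x)=w^*\text{-}\lim_{\mathcal{U}}P_k(x)\in Z^{**}$ produces a map $\Psi\colon X\to Z^{**}$ of norm $\le L$ whose restriction to $Z$ is the canonical embedding $Z\hookrightarrow Z^{**}$; composing with a norm-one projection $Z^{**}\to Z$ (available since $Z\cong L_p$) yields a projection of $X$ onto $Z$ of norm $\le L$. Hence $c_p(X)\le L$, which with the previous inequality gives (3). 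This construction of an $L$-complemented isometric copy of $L_p$ inside $X$ is the step I expect to be the main obstacle.

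Items (4) and (5) follow from (1) and (3) together with routine perturbation arguments. For (4): given $C$ as in the hypothesis, fix an isometric copy $F\cong\ell_p^n$ in $X$; for each $\varepsilon>0$ take a $C$-complemented $(1+\varepsilon)$-copy $G_\varepsilon$ of $\ell_p^n$ and, by the Fra\"iss\'e property, some $T_\varepsilon\in\isome(X)$ so that $T_\varepsilon(G_\varepsilon)$ lies within gap $o(1)$ of $F$ as $\varepsilon\to 0$; transferring the projection onto $G_\varepsilon$ and then perturbing it onto $F$ shows $F$ is $C(1+o(1))$-complemented, hence $C$-complemented by a compactness argument on the (weak$^*$-compact) set of norm-bounded projections onto $F$. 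Thus $c_p^n(X)\le C$ for every $n$, and (3) gives $c_p(X)=\lim_n c_p^n(X)\le C$. For (5): for fixed $n$ one has $d_\mr{BM}(\ell_p^n,\ell_q^n)\le n^{|1/p-1/q|}\to 1$ as $q\to p$, and if $p_k\to p$ with all $p_k\in\fr(X)$ then each $\ell_{p_k}^n$ also embeds isometrically into $X$ (age closed); the same ``transfer a projection between nearby subspaces, realigned by a global isometry'' lemma then shows that the common complementation constants of the isometric copies of $\ell_p^n$ and of $\ell_{p_k}^n$ differ by a factor tending to $1$, i.e. $c_{p_k}^n(X)\to c_p^n(X)$. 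Hence $p\mapsto c_p^n(X)$ is continuous on $\fr(X)$.
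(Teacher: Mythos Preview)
Your proof is correct and follows the paper's approach closely for items (1), (2), (4), and (5). The one substantive difference is in item (3), and it concerns precisely the step you flag as ``the main obstacle''.

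You try to build an isometric copy of $L_p$ inside $X$ from scratch, via a back-and-forth construction of a chain $V_1\subseteq V_2\subseteq\cdots$ with $V_k\cong\ell_p^{2^k}$, perturbing at each stage and passing to a limit. This can indeed be made to work (it is essentially the standard construction of the Fra\"iss\'e limit inside $X$), but it is unnecessary here. The paper instead invokes the result from \cite{FLMT} that whenever $X$ is Fra\"iss\'e and $p\in\fr(X)$, the space $L_p$ embeds isometrically into $X$. Granting this, one simply fixes an isometric copy $Z\cong L_p$ inside $X$, takes the dyadic chain $(V_k)_k$ \emph{inside this copy}, notes that each $V_k$ is $c_p^{2^k}(X)\le L$-complemented in $X$ by (1), and then takes the weak$^*$-limit of the projections exactly as you do. Your ``main obstacle'' therefore disappears once the embedding result from \cite{FLMT} is cited, and the proof of (3) becomes a few lines. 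The paper's remark that ``here, except for (1), we are using that the space $X$ is assumed to be Fra\"iss\'e and not only \auh'' refers precisely to this embedding of $L_p$ (and to the approximate extension of $\delta$-embeddings needed in (4) and (5)).
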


\begin{proof}  Recall that $L_p$ embeds into $X$ as soon as $p \in \fr(X)$ (see \cite{FLMT}). Here, except for (1), we are using that the space $X$ is assumed to be Fraïssé and not only \auh (although we do not know examples of \auh non Fraïssé spaces). 
(1): Since $X$ is \auh, any isometric map between two copies of $\ell_p^n$ may be approximated by a global isometry, therefore extends to a $1+\varepsilon$-isometry for $\varepsilon$ arbitrarily small, and we deduce that any isometric copy of $\ell_p^n$ is complemented by a projection of norm at most $c_p^n(X)+\delta$, for arbitrary $\delta>0$. We may then define a projection as some weak limit of almost optimal projections along an ultrafilter to see that all copies of $\ell_p^n$ are complemented by
a projection of norm $c_p^n(X)$.

 Note that the sequence $c_p^n(X)$ is non-decreasing, and assume it is bounded by $C$. 
Writing $L_p$ as the closure of the increasing union of $\ell_p^n$'s, and again defining a projection onto $L_p$ as some weak limit of projections on the $\ell_p^n$'s (using that $L_p$  is $1$-complemented in its bidual),  we deduce that both (2) and (3) hold.

(4): Let $\varepsilon>0$, $\eta>0$, and consider a $C$-complemented $1+\eta$-isometric copy $F$ of $\ell_p^n$. If $\eta$ had been chosen small enough, the Fra\"iss\'e property allows to extend a partial $1+\eta$-isometry $t$ between $F$ and an isometric copy $G$ of $\ell_p^n$  to a global isomorphism $T$ on $X$ such that
$\|T\| \|T^{-1}\| <1+\varepsilon$, guaranteeing that $G$ is $C(1+\varepsilon)$-complemented. Then we deduce that $c_p^n(X) \leq C$, and, by c), that $c_p(X) \leq C$.

(5): Let $p \in \fr(X),$ $\varepsilon>0$, $\eta>0$. For $q$ close enough to $p$ in $\fr(X)$, there is a partial $1+\eta$-isometry $t$ between copies of $\ell_p^n$
and $\ell_q^n$. If $\eta$ had been chosen small enough, the Fra\"iss\'e property allows to extend $t$ to a global isomorphism $T$ on $X$ such that
$\|T\| \|T^{-1}\| <1+\varepsilon$, guaranteeing that
$c_q^n(X) \leq (1+\varepsilon)c_p^n(X)$.
\end{proof}

In the case (4) we shall say that $\ell_p$ is uniformly complementably finitely representable in $X$; we noted that this implies that $X$ contains a complemented copy of $L_p$.

\subsection{The hilbertian case}

We now concentrate  on the case of Hilbertian/euclidean copies. 
The exact value of the (best) constant of complementation of isometric copies of $\ell_2^n$ or $\ell_2$ inside $L_p$, $1 \leq p \leq +\infty$ is known (see Gordon-Lewis-Retherford \cite[Theorem 6]{Gor72}). Let us note:
\begin{equation}\label{o9895t45t5}
c_2(L_p)=\frac{2}{\sqrt{\pi}} \Big(\Gamma(\frac{p+1}{2})\Big)^{1/p} \Big(\Gamma(\frac{p'+1}{2})\Big)^{1/p'},\, 1<p<+\infty    
\end{equation}
and
$$c_2(L_\infty)=c_2(L_1)=+\infty$$
where for $p \in [1,+\infty]$ we use the notation $p' \in [1,+\infty]$ for the conjugate of $p$.

 Recall that $p(X):=\sup\conj{p}{ X \ {\rm has\ type\ } p}$ and
that $q(X):=\inf\conj{p}{ X \ {\rm has\ cotype\ } q}$. We say that $X$ is near Hilbert when $p(X)=q(X)=2$.  It is well-known that $X$ does  not necessarily have type $p(X)$ or cotype $q(X)$. However, the Maurey-Pisier Theorem \cite{MaPi} states that both $\ell_{p(X)}$ and $\ell_{q(X)}$ are finitely representable  in $X$.  Regarding duality, it is immediate that if $X$ has type $p>1$ then $X^*$ has cotype $p'$. A profound result  by Pisier \cite{Pi} states conversely that if $X$ has no-trivial type and cotype $q$, then $X^*$ has type $q'$. Another classsical result by  Pisier \cite{Pi} states that a Banach space $X$ has non-trivial type exactly when $X$ is  {\em locally $\pi$-euclidean}, that is, when  for some $c \geq 1$, for every $n$ and $\epsilon>0$, there exists $N$ such that any $N$-dimensional subspace of $X$ admits an $n$-dimensional subspace which is $1+\epsilon$-isometric to an euclidean space and which is $c$-complemented in $X$ (see also Pe\l czynski-Rosenthal \cite[Definition and Observation on p284]{PR}, noting that the definitions in \cite{Pi} and \cite{PR} are equivalent by Dvoretsky theorem).


\begin{lemma}\label{c2}  The following hold: 
\begin{enumerate}[\rm(1)]
\item   $c_2(L_p)=c_2(L_{p'})$ for $1 \leq p \leq +\infty$. 

\item
If $1\le p <\infty$ is in $\mathrm{Fr}(X)$ for some Fraïssé space $X$,  then
$c_2(L_p) \leq c_2(X).$

\item The map $p \mapsto c_2(L_p)$ is decreasing on $[1,2]$ and increasing on $[2,\infty]$.

\end{enumerate}
\end{lemma}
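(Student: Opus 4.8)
The plan is to exploit the explicit formula \eqref{o9895t45t5} together with the embedding and extension results from Section 3. For item (1), $c_2(L_p)=c_2(L_{p'})$: when $1<p<\infty$ this is immediate from \eqref{o9895t45t5}, since the right-hand side is visibly symmetric under the exchange $p \leftrightarrow p'$ (it is the product $\frac{2}{\sqrt\pi}(\Gamma(\tfrac{p+1}{2}))^{1/p}(\Gamma(\tfrac{p'+1}{2}))^{1/p'}$, and swapping $p$ and $p'$ just swaps the two factors). The endpoint case is covered by the stated equality $c_2(L_\infty)=c_2(L_1)=+\infty$. One could alternatively note that an isometric copy of $\ell_2^n$ (or $\ell_2$) in $L_p$ dualizes to an isometric copy in $L_{p'}$ with the complementation constant preserved by taking adjoints of projections, but invoking the formula is the cleanest route.

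For item (2), that $p\in\mathrm{Fr}(X)$ with $X$ Fra\"iss\'e implies $c_2(L_p)\le c_2(X)$: first I would recall that since $p\in\mathrm{Fr}(X)$, the space $L_p$ embeds isometrically into $X$ (this is cited from \cite{FLMT} in the proof of Proposition \ref{complecomple}). Fix such an isometric copy $Z\cong L_p$ inside $X$. Any isometric copy of $\ell_2^n$ inside $Z$ is an isometric copy inside $X$, hence is $C$-complemented in $X$ for any $C>c_2(X)$; restricting such a projection to $Z$ would not a priori give a projection \emph{within} $Z$, so instead I would argue through $c_2(L_p)=\lim_n c_2^n(L_p)$ (which holds by Proposition \ref{complecomple}(3) applied to the Fra\"iss\'e space $L_p$, using $2\in\mathrm{Fr}(L_p)$) and compare $c_2^n(L_p)$ to $c_2^n(X)\le c_2(X)$. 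Concretely: an isometric copy of $\ell_2^n$ in $X$ lying inside $Z\cong L_p$ is $c_2^n(X)$-complemented in $X$ by a projection $P$; but $Z$ is itself a copy of $L_p$, and any isometric copy of $\ell_2^n$ in $L_p$ is complemented \emph{within} $L_p$ by a projection of norm exactly $c_2^n(L_p)$ (best constant). The inequality we want is $c_2^n(L_p)\le c_2^n(X)$: take a copy $E$ of $\ell_2^n$ in $L_p$ realizing the worst case inside $L_p$; transported via the embedding into $X$ it is $c_2^n(X)$-complemented in $X$, and composing the projection with the norm-one conditional-expectation-type projection of $X$-onto-$Z$... — here I must be careful, as $Z$ need not be complemented in $X$. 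The robust fix is: by Proposition \ref{complecomple}(1) all isometric copies of $\ell_2^n$ in $X$ have the \emph{same} optimal complementation constant $c_2^n(X)$; and $L_p$ contains, for the worst embedding of $\ell_2^n$ into $L_p$, a copy complemented with constant $c_2^n(L_p)$ and no better; since that copy also sits in $X$, optimality inside $L_p$ is a lower bound for what is forced, giving $c_2^n(L_p)\le c_2^n(X)$. Taking $n\to\infty$ and using Proposition \ref{complecomple}(3) for both spaces yields $c_2(L_p)\le c_2(X)$.

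For item (3), monotonicity of $p\mapsto c_2(L_p)$: by item (1) it suffices to prove the function is increasing on $[2,\infty]$, since the behaviour on $[1,2]$ then follows by the reflection $p\mapsto p'$ (which reverses order and maps $[1,2]$ onto $[2,\infty]$). On $[2,\infty)$ I would either differentiate the logarithm of \eqref{o9895t45t5} — writing $\log c_2(L_p)=\log\frac{2}{\sqrt\pi}+\frac1p\log\Gamma(\tfrac{p+1}{2})+\frac1{p'}\log\Gamma(\tfrac{p'+1}{2})$ and checking the derivative is positive for $p>2$ using convexity/monotonicity properties of $\log\Gamma$ and the digamma function — or, more conceptually, use the probabilistic description: $c_2(L_p)$ equals $\|g\|_p\cdot\|g\|_{p'}$ up to the normalizing constant, where $g$ is a standard Gaussian, so $c_2(L_p)=\gamma_p\gamma_{p'}$ with $\gamma_p=(\mathbb{E}|g|^p)^{1/p}$ (this is exactly the content of \eqref{o9895t45t5} since $\mathbb{E}|g|^p = \frac{2^{p/2}}{\sqrt\pi}\Gamma(\tfrac{p+1}{2})$, so $\gamma_p=\sqrt2(\Gamma(\tfrac{p+1}{2})/\sqrt\pi)^{1/p}$ and the $\sqrt2$'s combine with $1/\sqrt2\cdot1/\sqrt2$... matching the constant $\frac2{\sqrt\pi}$ after bookkeeping). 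Then $p\mapsto\gamma_p=\|g\|_{L_p}$ is non-decreasing in $p$ by Jensen/monotonicity of $L_p$-norms on a probability space, while $p\mapsto\gamma_{p'}$ is non-increasing since $p'$ decreases; so monotonicity of the product is not automatic and one genuinely needs the derivative computation (or a direct comparison argument) to see that the increase of $\gamma_p$ dominates past $p=2$. That derivative estimate — showing $\frac{d}{dp}\big(\tfrac1p\log\Gamma(\tfrac{p+1}2)+\tfrac{1}{p'}\log\Gamma(\tfrac{p'+1}2)\big)>0$ for $p>2$, equivalently the log-derivative of $\gamma_p\gamma_{p'}$ is positive — is the one genuinely technical point, and I expect it to be the main obstacle; it reduces to a one-variable inequality for the digamma function that can be verified by standard monotonicity and the integral representation $\psi(x)=-\gamma+\int_0^1\frac{1-t^{x-1}}{1-t}\,dt$, but the bookkeeping is where care is needed.
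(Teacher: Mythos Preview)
Your treatment of (1) matches the paper's and is fine.

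For (2) you have a genuine gap, but one caused by an unnecessary worry. You write that ``restricting such a projection to $Z$ would not a priori give a projection \emph{within} $Z$'', and then embark on a workaround that never quite closes. But the restriction argument \emph{does} work, trivially: if $E\subset Z\subset X$ and $P:X\to X$ is a projection with range $E$ and $\|P\|\le C$, then $P(Z)\subset P(X)=E\subset Z$, so $P|_Z:Z\to Z$ is a projection onto $E$ with $\|P|_Z\|\le C$. This is exactly the paper's argument: any copy of $\ell_2^n$ sitting inside the isometric copy $Z\cong L_p$ is $c_2^n(X)$-complemented in $X$ (since $X$ is Fra\"iss\'e all copies share the same optimal constant), hence by restriction $c_2^n(X)$-complemented in $Z$, giving $c_2^n(L_p)\le c_2^n(X)$. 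Your ``robust fix'' (``optimality inside $L_p$ is a lower bound for what is forced'') is not a valid deduction as stated: knowing that $E$ cannot be better-than-$c_2^n(L_p)$-complemented \emph{inside $L_p$} says nothing directly about its complementation constant in the larger space $X$, unless you invoke precisely the restriction you dismissed.

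For (3) your approach is legitimate in principle but you leave the key digamma inequality unproved, and it is indeed an unpleasant computation. The paper avoids all calculus here by a structural argument you overlooked: for $1\le p\le q\le 2$ one has the classical isometric embedding $L_q\hookrightarrow L_p$, so $q\in\mathrm{Fr}(L_p)$; since $L_p$ is Fra\"iss\'e for $p\in[1,2]$, part (2) applied with $X=L_p$ gives $c_2(L_q)\le c_2(L_p)$, i.e.\ the map is decreasing on $[1,2]$. The increase on $[2,\infty]$ then follows from (1) by the order-reversing bijection $p\mapsto p'$. This bypasses the derivative estimate entirely.
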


\begin{proof} Assertion (1) is a direct consequence of the formula \eqref{o9895t45t5}.  (2): Since $X$ is Fraïssé and $p\in \fr(X)$, it follows that $X$ contains an isometric copy of $L_p$. For a given $n$, if $\ell_2^n$ is $C$-complemented in $X$, then any copy of $\ell_2^n$ inside $L_p$ is also $C$-complemented in $X$ (because $X$ is Fraïssé) and consequently also in $L_p$. This implies that $c_2^n(L_p)\le c_2^n(X)$, and by Proposition \ref{complecomple} (3) we have that $c_2(L_p)\le c_2(X)$.
(3): since $L_q$ embeds isometrically into $L_p$ for $1 \leq p \leq q \leq 2$, this and assertion (2) implies that the map is decreasing on $[1,2]$, and then, by assertion (1), increasing on $[2,+\infty]$ by (2).   
\end{proof}

The next relates $p(X)$, $q(X)$, and complemented copies of   $L_p$-spaces.

\begin{proposition}\label{several} Let $X$ be a Fraïss\'e Banach space.
\begin{enumerate}[\rm(1)]
\item If $q(X)=+\infty$ then $X$ is contains an isometric copy of the Gurarij space, and if $X$ separable then $X$ is isometric to the Gurarij space.
\item  If $p(X)=1$ and $q(X)<+\infty$, then $X$ contains isometric copies of $L_1$, 
$\ell_2$ is uncomplemented in $X$, and $c_2^n(X) \geq n \cdot \Gamma(n/2)/(\sqrt{\pi}\cdot \Gamma(1/2+n/2))$.
\item If $p(X)>1$ then $\ell_2$ is complemented in $X$.
There exists a unique $1 < p  \leq 2$ such that 
$c_2(X)=c_2(L_p)$, and we have $p(X) \geq p$ and $q(X) \leq p'$.
\item If $p(X)>1$ and $X$ has type $p(X)$ (resp. has cotype $q(X)$) then $X$ contains a complemented isometric copy of $L_{p(X)}$ (resp. of $L_{q(X)}$).
\item  If $p(X)>1$ and $q(X) \in 2\N+4$, then $X$ does not have cotype $q(X)$.

\end{enumerate}
\end{proposition}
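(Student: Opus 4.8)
The plan is to deduce (5) by contradiction, combining item (4) of this same proposition with Remark \ref{complu}. Concretely, I would assume that $X$ \emph{does} have cotype $q(X)$ and then derive a contradiction with the fact that $q(X)\in 2\N+4$.

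Under that assumption, the standing hypothesis $p(X)>1$ lets me invoke the cotype half of item (4) of Proposition \ref{several} verbatim: it yields a \emph{complemented isometric} copy of $L_{q(X)}$ sitting inside $X$. On the other hand, $q(X)\in 2\N+4$ means $q(X)$ is an even integer at least $4$, and Remark \ref{complu} rules out exactly this phenomenon — no \auh space, and a fortiori no Fra\"iss\'e space, can contain a complemented isometric copy of $L_p$ for any $p\in 2\N+4$. Applying that remark with $p=q(X)$ contradicts the previous sentence, so $X$ cannot have cotype $q(X)$.

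There is essentially no obstacle beyond checking that the two cited ingredients dovetail: item (4) needs $p(X)>1$, which is assumed, and it produces precisely a complemented isometric copy of $L_{q(X)}$, which is exactly the object Remark \ref{complu} forbids; moreover, since $q(X)$ is a finite even integer there is no degenerate case such as $q(X)=+\infty$ to treat separately (that regime is handled by item (1), which forces $X$ to be Gurarij). Thus the entire content of the statement is already packaged in the preceding results, and the argument is a short combination of them rather than a fresh computation.
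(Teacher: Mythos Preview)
Your proof of (5) is correct and is essentially the paper's own argument: the paper simply states that (5) is a direct consequence of (4) and Remark \ref{complu}, exactly as you have unpacked it.
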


\begin{proof}
Assertion (1) was already observed in \cite{FLMT}. It is a consequence of the Maurey-Pisier Theorem in \cite{MaPi} that $\ell_\infty$ is finitely representable in any space with non non-trivial cotype, implying that a space satisfying (1) must be isometrically universal for separable spaces, and the fact that the Gurarij space is the unique separably universal Fra\"iss\'e space.

 Assertion (2) is a consequence of Maurey-Pisier Theorem and the Fraiss\'e property - Lemma \ref{c2} (2). Since $\ell_2$ is uncomplemented in $L_1$ it has to be uncomplemented in $X$ and this also provides a lower bound for $c_2^n(X)$ by  
 $c_2^n(L_1)=n \cdot \Gamma(n/2)/(\sqrt{\pi}\cdot \Gamma(1/2+n/2))$.

Regarding (3), since $X$ has non-trivial type, it is locally $\pi$-euclidean. Therefore $X$ contains uniformly complemented $1+\varepsilon$-copies of euclidean spaces of arbitrary size and therefore a complemented copy of the Hilbert space. The formula \eqref{o9895t45t5}  implies that $p\mapsto c_2(L_p)$ is continuous, and by Lemma \ref{c2} (3), is also decreasing on $[1,2]$. Since $c_2(L_1)=\infty$ and $c_2(L_2)=1$, it follows that there must be a unique   $1 < p  \leq 2$ such that 
$c_2(X)=c_2(L_p)$. By the Maurey-Pisier Theorem on the finite representability of $L_{p(X)}$ in $X$, and by using that $X$ is Fraïssé,   $X$ contains an isometric copy of $L_{p(X)}$. It follows that 
$c_2(L_{p(X)}) \leq c_2(X)=c_2(L_p)$, and therefore $p(X) \geq p$. The same reasoning holds for cotype.

(4): Following \cite[Proposition 13.16]{Mi-Sch}, if we suppose that $p(X)>1$ and $X$ has type $p(X)$,  then $X$ contains for some $C \geq 1$ and for any $\varepsilon>0$, $1+\varepsilon$-isomorphic copies of $\ell_p^n$ $C$-complemented, with $p=p(X)$, and therefore a $C$-complemented copy of $L_{p(X)}$ by Proposition \ref{complecomple} (4).

Now suppose that $p(X)>1$ and $X$ has cotype $q(X)$. Then the Pisier duality result \cite{Pi} holds to deduce that $p(X^*)=q(X)'$ and $X^*$ has type $p(X^*)$.
Therefore  as above $X^*$ contains $1+\varepsilon$-isomorphic copies of $\ell_{p(X^*)}^n$'s uniformly complemented, and by duality and local reflexivity, $X$ contains $1+\varepsilon$-isomorphic copies of $\ell_{q(X)}^n$'s uniformly complemented. The rest of the reasoning is as under the first hypothesis, to deduce that $X$ admits a complemented copy of $L_{q(X)}$. 

(5) is then a direct consequence of (4) and of Remark \ref{complu}.
\end{proof}

Using Proposition \ref{newproj}, we have the following striking corollary of Proposition \ref{several}:

\begin{corollary} 
Any Fra\"iss\'e space $X$ satisfies exactly one of the two following properties: either
\begin{enumerate}[\rm(1)]
    \item  $X$ contains a complemented isometric copy of $\ell_2$, or
    \item   $X$ contains an isometric copy of $L_1$.\qed
    \end{enumerate} 
\end{corollary}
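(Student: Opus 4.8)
The plan is to read off both the dichotomy and its mutual exclusivity from Propositions~\ref{several} and \ref{newproj}, together with two classical facts about $L_1$: it contains an isometric copy of $\ell_2$ (the normalized span of a Gaussian sequence), and no such copy is complemented in $L_1$ — a fact already invoked in the proof of Proposition~\ref{several}.

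First I would show that at least one of (1), (2) holds, distinguishing cases according to the type index $p(X)$. If $p(X)>1$, then Proposition~\ref{several}(3) gives at once a complemented isometric copy of $\ell_2$ in $X$, which is exactly (1). If $p(X)=1$, then by the Maurey--Pisier theorem $\ell_{p(X)}=\ell_1$ is finitely representable in $X$, i.e.\ $1\in\fr(X)$; since $X$ is Fra\"iss\'e, $L_1$ then embeds isometrically into $X$ (using that $L_p$ embeds isometrically into a Fra\"iss\'e space whenever $p\in\fr(X)$, cf.~\cite{FLMT}), so (2) holds. (Equivalently one may quote Proposition~\ref{several}(2) when $q(X)<+\infty$, and Proposition~\ref{several}(1) together with the isometric universality of the Gurarij space when $q(X)=+\infty$.) As $p(X)\ge 1$ always, one of the two conclusions is reached.

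Next I would prove that (1) and (2) cannot hold simultaneously. Assume both. From (1), some isometric copy of $\ell_2$ in $X$ is $C$-complemented for a suitable $C\ge 1$. Since $X$ is \auh (being Fra\"iss\'e), and since $Y=\ell_2$ is reflexive — hence $1$-complemented in its bidual — and is the closed increasing union of the subspaces $\ell_2^n$, each $1$-complemented in $\ell_2$ via the orthogonal projection, Proposition~\ref{newproj}(3) applies and yields that \emph{every} isometric copy of $\ell_2$ inside $X$ is $C$-complemented. From (2), fix an isometric copy $L\subseteq X$ of $L_1$, and inside it an isometric copy $H$ of $\ell_2$. Then there is a projection $P\colon X\to H$ with $\|P\|\le C$, and since $H\subseteq L$ the restriction $P\rest L$ is a bounded projection of $L\cong L_1$ onto $H\cong\ell_2$, contradicting the uncomplementedness of $\ell_2$ in $L_1$. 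Hence exactly one of (1), (2) holds.

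The only point requiring a little care is verifying that the hypotheses of Proposition~\ref{newproj}(3) are genuinely met by $Y=\ell_2$, so that one may upgrade ``some isometric copy of $\ell_2$ is complemented'' to ``all isometric copies of $\ell_2$ are complemented''; this is the structural heart of the exclusivity argument, though it is routine. Everything else is bookkeeping around results already in place.
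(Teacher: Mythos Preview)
Your proof is correct and follows essentially the same approach the paper intends: the paper presents this as an immediate corollary of Proposition~\ref{several} (for the dichotomy) together with Proposition~\ref{newproj} (for mutual exclusivity), which is exactly the structure you implement. Your direct use of Maurey--Pisier in the case $p(X)=1$ to get $1\in\fr(X)$ and hence an isometric copy of $L_1$ is a slight streamlining over splitting further on $q(X)$, but this is a cosmetic difference.
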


Applying Proposition \ref{several} to the case of $1$-complemented copies of $L_p$
spaces we also get:

\begin{proposition} Let $X$ be a Fra\"iss\'e Banach space.
Then
\begin{enumerate}[\rm(1)]
    \item
there are at most $2$ values of $p$ (which must be conjugate) such that $X$ contains a $1$-complemented copy of $L_p$. 
\item
if $X$ contains a $1$-complemented copy of $L_p$ and $p \leq 2$ (resp. $p \geq 2$), then $p(X)=p$ (resp.  $q(X)=p'$). In particular if $X$ contains a $1$-complemented copy of $L_2$ then $X$ is near Hilbert. 
\end{enumerate}

\end{proposition}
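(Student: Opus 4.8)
The plan is to deduce both statements from Proposition~\ref{several}, using as the sole bookkeeping invariant the number $c_2(X)$, together with Lemma~\ref{c2} and the complementation transfer of Proposition~\ref{complecomple}. The key preliminary observation I would establish is: \emph{if the Fra\"iss\'e space $X$ contains a $1$-complemented isometric copy of $L_p$ with $1\le p<\infty$, then $c_2(X)=c_2(L_p)$}. The inequality $c_2(X)\le c_2(L_p)$ comes from transferring projections: a near-optimal projection onto a copy of $\ell_2^n$ inside $L_p$, post-composed with the contractive projection $X\to L_p$, gives $c_2^n(X)\le c_2^n(L_p)\le c_2(L_p)$ for every $n$, whence $c_2(X)=\lim_n c_2^n(X)\le c_2(L_p)$ by Proposition~\ref{complecomple}(3) (applicable since $\ell_2\hookrightarrow L_p\hookrightarrow X$ isometrically, so $2\in\fr(X)$). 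The reverse inequality is exactly Lemma~\ref{c2}(2), valid because $\ell_p\hookrightarrow L_p\hookrightarrow X$ isometrically gives $p\in\fr(X)$. Along the way I would record two facts for later use: by Remark~\ref{complu}, $p\notin 2\N+4$ (so $L_p$ is itself Fra\"iss\'e); and for $1<p<\infty$ the common value $c_2(X)=c_2(L_p)$ is finite by~\eqref{o9895t45t5}, which forces $p(X)>1$ — for if $p(X)=1$ then $\ell_1\in\fr(X)$, hence $L_1$ embeds isometrically into $X$ ($X$ being Fra\"iss\'e), and Lemma~\ref{c2}(2) would give $c_2(X)\ge c_2(L_1)=+\infty$, a contradiction.

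For part (1): if $X$ contains $1$-complemented copies of $L_p$ and of $L_q$, then $c_2(L_p)=c_2(X)=c_2(L_q)$ by the observation above. Using the conjugation symmetry $c_2(L_r)=c_2(L_{r'})$ of Lemma~\ref{c2}(1), I may pass to conjugates so that $p,q\in[1,2]$, and then strict monotonicity of $r\mapsto c_2(L_r)$ on $[1,2]$ (Lemma~\ref{c2}(3), with strictness supplied by $c_2(L_1)=+\infty$ and the real-analyticity of~\eqref{o9895t45t5} on $(1,2)$) forces $p=q$. Hence the set of exponents $p$ with this property is contained in $\{p_0,p_0'\}$ for a single $p_0\in[1,2]$: at most two values, necessarily conjugate.

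For part (2): suppose $X$ contains a $1$-complemented copy of $L_p$. If $p\le 2$, the case $p=1$ is immediate ($\ell_1\in\fr(X)$, so $p(X)=1$); for $1<p\le 2$, the subspace $L_p\subseteq X$ carries no type $>p$ (it contains $\ell_p$ isometrically), so $p(X)\le p$, while $p(X)>1$ (established above) lets me invoke Proposition~\ref{several}(3), which produces the unique $r\in(1,2]$ with $c_2(X)=c_2(L_r)$ and the bound $p(X)\ge r$; since $c_2(X)=c_2(L_p)$ and $p\in(1,2]$, strictness gives $r=p$, hence $p(X)=p$. The case $p\ge 2$ is the mirror image: $L_p\subseteq X$ carries no cotype $<p$, so $q(X)\ge p$; and $c_2(X)=c_2(L_p)=c_2(L_{p'})$ with $p'\in(1,2]$ and finite, so Proposition~\ref{several}(3) gives $r=p'$ and $q(X)\le r'=p$, whence $q(X)=p$ (so the conclusion in the case $p\ge 2$ should read $q(X)=p$, which agrees with $p'$ precisely at $p=2$). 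In particular a $1$-complemented copy of $L_2$ yields $p(X)=q(X)=2$, i.e. $X$ is near Hilbert.

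The argument is essentially a repackaging of the earlier results, so there is no genuinely hard step; the one point requiring care is ensuring that one lands in case~(3) of Proposition~\ref{several} rather than in its cases (1)--(2), which is exactly what the implication ``$c_2(X)=c_2(L_p)<\infty\Rightarrow p(X)>1$'' secures (it rules out $p(X)=1$ directly, and then Proposition~\ref{several}(3) applies without any further discussion of the $q(X)=\infty$ alternative). The only other non-cosmetic ingredient is the \emph{strict} monotonicity of $p\mapsto c_2(L_p)$ on $[1,2]$, which is read off the explicit Gamma-function formula.
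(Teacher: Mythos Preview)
Your proof is correct and follows essentially the same route as the paper: establish $c_2(X)=c_2(L_p)$ from the $1$-complementation and Lemma~\ref{c2}(2), then read off (1) from the injectivity of $p\mapsto c_2(L_p)$ on $[1,2]$ modulo conjugation, and (2) from the bounds in Proposition~\ref{several}(3) combined with the fact that $X\supseteq L_p$ caps $p(X)$ (resp.\ $q(X)$) on the other side. Your write-up is actually more careful than the paper's in several places---you explicitly secure $p(X)>1$ before invoking Proposition~\ref{several}(3), you justify strict monotonicity, and you correctly observe that in the case $p\ge 2$ the conclusion should read $q(X)=p$ rather than $q(X)=p'$ (these agree only at $p=2$).
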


\begin{proof}  Suppose that $X$ contains a $1$-complemented copy of $L_p$. Then, by Lemma \ref{c2} (1), (2), $c_2(L_{p'})=c_2(L_p)\le c_2(X)$. On  the other, the 1-complementation of a copy of $L_p$ implies that $c_2(X)\le c_2(L_p)$.  This  determines the set $\{p,p'\}$. (2): if $p \leq 2$ then it follows that $p(X) \geq p$, but also since $X$ contains a copy of $L_p$, that $p(X) \leq p$. Likewise if $p \geq 2$ then it follows that $q(X)=p'$.\end{proof}

We do not know if any Fraïssé contains a 1-complemented copy of some $L_p$ and if it contains at most  such 1-complemented $L_p$. 
There are other properties of Fra\"iss\'e $L_p$ spaces which may be seen as natural steps towards the conjecture that all separable Fra\"iss\'e spaces are either $L_p$-spaces or the Gurarij space. We just list two of them here: must any Fra\"iss\'e space $X$ attain its type
and cotype?
 if  $X$ is Fra\"iss\'e and reflexive, must ${\rm Fr}(X)$ be either an interval of the form $[p,2]$ or a singleton $\{q\}$ for $2 \leq q <+\infty$?
The reader will easily find other natural conjectures in this vein.

\

We conclude with a solution to  a  weak version of the Mazur rotation problem posed by G. Godefroy (see  \cite[Problem 5.20, and the paragraph before it]{felixvalentinbeata}).

\begin{theorem} A Fra\"iss\'e space admitting $C_\infty$ bump functions must be isomorphic to a Hilbert space. \end{theorem}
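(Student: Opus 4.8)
The plan is to combine the classical fact that $L_p[0,1]$ admits a $C^\infty$-smooth bump function precisely when $p$ is an even integer (see \cite{DGZ}) with the finite representability and complementation results established above. First I would note that closed subspaces inherit $C^\infty$-smooth bumps: after translating a given $C^\infty$-smooth bump so that it does not vanish at the origin, its restriction to a subspace $Y$ is again a $C^\infty$-smooth bump, because the inclusion $Y\hookrightarrow X$ is linear and bounded. Since $L_p$ embeds isometrically into $X$ for every $p\in\fr(X)$ (\cite{FLMT}), we conclude that $\fr(X)\cap[1,\infty)\subseteq 2\N$. I would then dispose of the degenerate cases: if $q(X)=\infty$, Proposition \ref{several}(1) gives an isometric copy of the Gurarij space \Gurarij, hence of $\ell_1$, inside $X$, and $\ell_1$ has no $C^2$-smooth bump; if $p(X)=1$, then $1\in\fr(X)$ by the Maurey--Pisier theorem \cite{MaPi}, against $\fr(X)\cap[1,\infty)\subseteq 2\N$. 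Hence $X$ has non-trivial type and non-trivial cotype, so it is superreflexive, and being reflexive and \auh both $X$ and $X^*$ are uniformly convex and uniformly smooth.

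Next I would pin down the type and cotype. By \cite{MaPi} we have $p(X),q(X)\in\fr(X)$, so both are even integers; since $p(X)\in(1,2]$ this forces $p(X)=2$. Since $q(X)$ is an even integer in $[2,\infty)$, either $q(X)=2$ or $q(X)\in 2\N+4$. In the latter case $L_{q(X)}$ embeds isometrically into $X$, and, using that $X$ is \auh together with Propositions \ref{newproj} and \ref{complecomple}(4) (the copies of $\ell_{q(X)}^n$ spanned by disjointly supported functions are $1$-complemented inside $L_{q(X)}$, so $\ell_{q(X)}$ is uniformly complementably finitely representable in $X$), $X$ would contain a complemented isometric copy of $L_{q(X)}$, contradicting Remark \ref{complu}. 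Therefore $q(X)=2$, so that $X$ is near Hilbert and in fact $\fr(X)=\{2\}$.

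It remains to upgrade ``near Hilbert'' to ``isomorphic to a Hilbert space'', and this is the step where the $C^\infty$-smoothness of $X$ itself — and not just of its subspaces — must be exploited. The plan here is to use almost transitivity to transfer the $C^\infty$-smooth bump into an equivalent norm on $X$ that is $C^\infty$-smooth and uniformly smooth of power type $2$; the Figiel--Pisier inequality then gives $X$ type $2$, while the dual norm on $X^*$ is uniformly convex of power type $2$, and, feeding in the almost transitivity of $X^*$ together with the near-Hilbert property of both $X$ and $X^*$, one obtains that $X$ also has cotype $2$; Kwapień's theorem then yields $X\cong\ell_2$. The main obstacle is precisely this last step: rigorously transferring a $C^\infty$-smooth bump into a uniformly $C^2$-smooth norm, and deriving from the smoothness that $X$ attains type $2$ and cotype $2$ — attainment of type and cotype not being known for general Fraïssé spaces, it has to be forced here by the smoothness assumption itself.
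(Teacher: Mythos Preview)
Your argument has two genuine gaps, and the paper's proof closes both with a single external ingredient you did not invoke.

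First, in your treatment of the case $q(X)\in 2\N+4$: the observation that the canonical copies of $\ell_{q(X)}^n$ are $1$-complemented \emph{in $L_{q(X)}$} does not give uniform complementation \emph{in $X$}. Proposition~\ref{newproj} only transfers complementation between isometric copies once you already have one complemented copy in $X$, and Proposition~\ref{complecomple}(4) requires the $\ell_{q(X)}^n$'s to be uniformly complemented in $X$ to begin with. So the claimed complemented copy of $L_{q(X)}$ in $X$ is not justified, and the appeal to Remark~\ref{complu} is premature. What you are missing here is precisely attainment of cotype: Proposition~\ref{several}(4) would give the complemented copy, but it needs $X$ to have cotype $q(X)$, which you have not established.

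Second --- and you flag this yourself --- even if you reach $p(X)=q(X)=2$, passing from ``near Hilbert'' to ``isomorphic to Hilbert'' is a real obstacle that your sketch does not overcome.

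The paper's proof bypasses both difficulties simultaneously by quoting Deville's theorem (see \cite[Chapter V.4]{DGZ}): if $X$ has non-trivial type, admits a $C_\infty$ bump, and is \emph{not} isomorphic to a Hilbert space, then $q(X)$ is an even integer $\ge 4$ \emph{and} $X$ has cotype $q(X)$. This single statement supplies exactly the attainment of cotype you lack, and it already builds in the dichotomy ``Hilbert vs.\ $q(X)\in 2\N+4$ with cotype attained''. One then applies Proposition~\ref{several}(5) directly to reach a contradiction. Your subspace-inheritance argument to get $\fr(X)\subseteq 2\N$ is correct and pleasant, but it only yields the value of $q(X)$, not that the cotype is attained --- and that is where the proof lives.
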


\begin{proof} Assume $X$ is as above and not isomorphic to a Hilbert space. Since a space with a 
 Fr\'echet-differentiable bump function is necessarily Asplund, $X$ cannot contain a copy of $L_1$, whereby by the Proposition \ref{several} (2), $X$ has non trivial type. 
According to a Theorem of Deville, see \cite[Chapter V.4]{DGZ} for an exposition, this implies $q(X)$ is an even number larger than $2$ and $X$ has cotype $q(X)$, and this contradicts Proposition \ref{several} (5).\end{proof}

A Banach space $X$ is Lipschitz transitive \cite{CSwheeling} when there exists $C \geq 1$ such that for any two normalized vectors $x,y$ of $X$, there is a surjective isometry $T$ on $X$ such that
$Tx=y$ and $\|T-Id\| \leq C\|y-x\|$. According to \cite[Lemma 2.6]{CSwheeling}, any Lipschitz transitive norm on a separable space is $C_\infty$. We immediately deduce:

\begin{proposition} A separable Fra\"iss\'e space which is Lipschitz transitive must be isomorphic to Hilbert space. \qed 
\end{proposition}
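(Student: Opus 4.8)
The plan is to chain together the two facts recorded just above the statement. First I would invoke \cite[Lemma 2.6]{CSwheeling}: since $X$ is separable and its norm is Lipschitz transitive, that norm is $C_\infty$, i.e.\ infinitely Fr\'echet differentiable on $X\setminus\{0\}$.

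Next I would pass from a $C_\infty$ norm to a $C_\infty$ bump function by the standard composition trick, which conveniently avoids any issue at the origin where $\|\cdot\|$ need not be differentiable. Choose an infinitely differentiable $\varphi\colon \R\to\R$ with $\varphi\equiv 1$ on $[0,1/2]$ and $\varphi\equiv 0$ on $[1,+\infty)$, and set $b(x):=\varphi(\|x\|)$. On the open set $\{x\in X:\|x\|<1/2\}$ the function $b$ is constantly equal to $1$, hence smooth there irrespective of the behaviour of the norm near $0$; on the open set $\{x\in X:\|x\|>1/4\}$ the norm is $C_\infty$, so $b$ is $C_\infty$ there as well; these two open sets cover $X$, so $b$ is a $C_\infty$ function on $X$. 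Since $b$ is supported in the closed unit ball and $b(0)=1\neq 0$, it is a $C_\infty$ bump function, so $X$ is a Fra\"iss\'e space admitting $C_\infty$ bump functions.

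Finally I would apply the preceding Theorem, which yields that $X$ is isomorphic to a Hilbert space, completing the argument. There is essentially no obstacle: the deduction is immediate once the smooth-bump construction is written down, and all the substantive content sits in \cite[Lemma 2.6]{CSwheeling} and in the proof of the previous Theorem (which itself rests on Deville's theorem, the Maurey--Pisier theorem, and Proposition~\ref{several}).
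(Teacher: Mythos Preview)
Your proposal is correct and follows exactly the paper's approach: the paper simply notes that \cite[Lemma 2.6]{CSwheeling} gives a $C_\infty$ norm and then invokes the preceding Theorem, with the \qed indicating the deduction is immediate. You have merely spelled out the standard passage from a $C_\infty$ norm to a $C_\infty$ bump function, which the paper leaves implicit.
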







\bibliographystyle{alpha}
\bibliography{bibliography}

\newcommand{\etalchar}[1]{$^{#1}$}
\begin{thebibliography}{FLAMT20}

\bibitem[AC94]{AltomareCampiti}
F.~Altomare and M.~Campiti.
\newblock {\em Korovkin-type Approximation Theory and its Applications},
  volume~17 of {\em De Gruyter Studies in Mathematics}.
\newblock De Gruyter, 1994.

\bibitem[ACSC{\etalchar{+}}16]{ACCGM}
A.~Aviles, F.~Cabello~Sanchez, J.~Castillo, M.~Gonzalez, and Y.~Moreno.
\newblock {\em Separably injective Banach spaces}, volume 2132 of {\em Lecture
  Note in Mathematics}.
\newblock Springer, 2016.

\bibitem[AFGR19]{AFGR}
L.~Antunes, V.~Ferenczi, S.~Grivaux, and Ch. Rosendal.
\newblock {L}ight groups of isomorphisms of {B}anach spaces and invariant {LUR}
  renormings.
\newblock {\em Pacific J. Math.}, 301:31--54, 2019.

\bibitem[And66]{Ando}
T.~And\^{o}.
\newblock Contractive projections in {$L_{p}$} spaces.
\newblock {\em Pacific J. Math.}, 17:391--405, 1966.

\bibitem[Ber74]{Bernau}
S.~J. Bernau.
\newblock Theorems of {K}orovkin type for {$L_{p}$}-spaces.
\newblock {\em Pacific J. Math.}, 53:11--19, 1974.

\bibitem[BL74]{BerensLorentz}
H.~Berens and G.~G. Lorentz.
\newblock Korovkin theorems for sequences of contractions on {$L^{p}$}-spaces.
\newblock In {\em Linear operators and approximation, {II} ({P}roc. {C}onf.,
  {O}berwolfach {M}ath. {R}es. {I}nst., {O}berwolfach, 1974)}, pages 367--375.
  Internat. Ser. Numer. Math., Vol. 25, 1974.

\bibitem[BLALM21]{BLMT}
D.~Bartosov\'a, J.~Lopez-Abad, M.~Lupini, and B.~Mbombo.
\newblock The {R}amsey property for {B}anach spaces and {C}hoquet simplices.
\newblock {\em Journal of the European Math. Soc.}, online first:1242--1246,
  2021.

\bibitem[BM77]{BeMa}
Bernard Beauzamy and Bernard Maurey.
\newblock Points minimaux et ensembles optimaux dans les espaces de {B}anach.
\newblock {\em J. Functional Analysis}, 24(2):107--139, 1977.

\bibitem[Boh42]{Bohnenblust}
F.~Bohnenblust.
\newblock A characterization of complex {H}ilbert spaces.
\newblock {\em Portugal. Math.}, 3:103--109, 1942.

\bibitem[Cal75]{Calvert}
B.~Calvert.
\newblock Convergence sets in reflexive {B}anach spaces.
\newblock {\em Proc. Amer. Math. Soc.}, 47:423--428, 1975.

\bibitem[CF21]{CF}
J.~Castillo and V.~Ferenczi.
\newblock Group actions on twisted sums of {B}anach spaces.
\newblock {\em arXiv}, 2003.09767, 2021.

\bibitem[CG97]{CG}
J.~Castillo and M.~Gonz\'alez.
\newblock {\em Three-space problems in Banach space theory}, volume 1667 of
  {\em Lecture Notes in Mathematics}.
\newblock Springer, 1997.

\bibitem[CP10]{CP}
J.~Castillo and A.~Plichko.
\newblock {B}anach spaces in various positions.
\newblock {\em Journal of Funct. Anal.}, 259:2098--2138, 2010.

\bibitem[CS70]{CoS}
H.~B. Cohen and F.~E. Sullivan.
\newblock Projecting onto cycles in smooth, reflexive {B}anach spaces.
\newblock {\em Pacific J. Math}, 34:355--364, 1970.

\bibitem[CS97]{CabelloSanchez}
F.~Cabello~Sanchez.
\newblock Regards sur le probl\`eme des rotations de {M}azur.
\newblock {\em Extracta Math.}, 12:97--116, 1997.

\bibitem[CS21]{CSwheeling}
F.~Cabello~Sanchez.
\newblock Wheeling around {M}azur rotations problem.
\newblock {\em arXiv 2109.01906}, 2021.

\bibitem[CSFRar]{felixvalentinbeata}
F.~Cabello~Sanchez, V.~Ferenczi, and B.~Randrianantoanina.
\newblock On {M}azur rotations problem and its multidimensional versions.
\newblock {\em São Paulo J. of Math. Sci.}, to appear.

\bibitem[DGZ93]{DGZ}
R.~Deville, G.~Godefroy, and V.~Zizler.
\newblock {\em Smoothness and renormings in Banach spaces}, volume~64 of {\em
  Pitman Monographs and Surveys in Pure and Applied Mathematics}.
\newblock Longman Scientific and Technical, Harlow; copublished in the United
  States with John Wiley and Sons, Inc., New York, 1993.

\bibitem[Dou65]{Doug}
R.~G. Douglas.
\newblock Contractive projections on an {$L_{1}$} space.
\newblock {\em Pacific J. Math.}, 15:443--462, 1965.

\bibitem[EFHN15]{EFHN}
T.~Eisner, B.~Farkas, M.~Haase, and R.~Nagel.
\newblock {\em Operator theoretic aspects of ergodic theory}, volume 272 of
  {\em Graduate Texts in Mathematics}.
\newblock Springer, Cham, 2015.

\bibitem[FLAMT20]{FLMT}
V.~Ferenczi, J.~Lopez-Abad, B.~Mbombo, and S.~Todorcevic.
\newblock Amalgamation and {R}amsey properties of {$L_ p$} spaces.
\newblock {\em Adv. Math.}, 369:107190, 2020.

\bibitem[FR13]{FR-isometries}
V.~Ferenczi and Ch. Rosendal.
\newblock On isometry groups and maximal symmetry.
\newblock {\em Duke Math. J.}, 162:1771--1831, 2013.

\bibitem[FR17]{FR}
V.~Ferenczi and Ch. Rosendal.
\newblock Non-unitarisable representations and maximal symmetry.
\newblock {\em J. Inst. Math. Jussieu}, 16(2):421--445, 2017.

\bibitem[Fre04]{Frem3}
D.~H. Fremlin.
\newblock {\em Measure theory. {V}ol. 3}.
\newblock Torres Fremlin, Colchester, 2004.
\newblock Measure algebras, Corrected second printing of the 2002 original.

\bibitem[GKL00]{GoKaLi}
G.~Godefroy, N.~J. Kalton, and D.~Li.
\newblock Operators between subspaces and quotients of {$L^1$}.
\newblock {\em Indiana Univ. Math. J.}, 49(1):245--286, 2000.

\bibitem[GLR72]{Gor72}
Y.~Gordon, D.~R. Lewis, and J.~R. Retherford.
\newblock Banach ideals of operators with applications to the finite
  dimensional structure of {B}anach spaces.
\newblock {\em Israel J. Math.}, 13:348--360 (1973), 1972.

\bibitem[GM83]{Gr-Mi}
M.~Gromov and V.~D. Milman.
\newblock A topological application of the isoperimetric inequality.
\newblock {\em Amer. J. Math.}, 105(4):843--854, 1983.

\bibitem[GP07]{Gi-Pe}
Thierry Giordano and Vladimir Pestov.
\newblock Some extremely amenable groups related to operator algebras and
  ergodic theory.
\newblock {\em J. Inst. Math. Jussieu}, 6(2):279--315, 2007.

\bibitem[Hal50]{Ha}
P.~R. Halmos.
\newblock {\em Measure {T}heory}.
\newblock D. Van Nostrand Company, Inc., New York, N. Y., 1950.

\bibitem[Jan97]{Jansson}
S.~Jansson.
\newblock {\em Gaussian Hilbert spaces}, volume 129 of {\em Cambridge Tracts in
  Mathematics}.
\newblock Cambridge University Press, 1997.

\bibitem[Joh12]{johnson-overflow}
W.~B. Johnson.
\newblock Comment in topic 85492.
\newblock {\em Mathoverflow}, 2012.

\bibitem[Kak39]{K}
S.~Kakutani.
\newblock Some characterizations of {E}uclidean space.
\newblock {\em Jpn. J. Math.}, 16:93--97, 1939.

\bibitem[KK01]{koldo}
A.~Koldobsky and H.~K\"{o}nig.
\newblock Aspects of the isometric theory of {B}anach spaces.
\newblock In {\em Handbook of the geometry of {B}anach spaces, {V}ol. {I}},
  pages 899--939. North-Holland, Amsterdam, 2001.

\bibitem[Kor60]{Korovkin}
P.~P. Korovkin.
\newblock {\em Linear operators and approximation theory}.
\newblock Translated from the Russian ed. (1959). Russian Monographs and Texts
  on Advanced Mathematics and Physics, Vol. III. Gordon and Breach Publishers,
  Inc., New York; Hindustan Publishing Corp. (India), Delhi, 1960.

\bibitem[KS13]{KubisSolecki}
W.~Kubi\'{s} and S.~Solecki.
\newblock A proof of uniqueness of the {G}urari\u{\i} space.
\newblock {\em Israel J. Math.}, 195(1):449--456, 2013.

\bibitem[Lan93]{Lancien}
G.~Lancien.
\newblock Dentability indices and locally uniformly convex renormings.
\newblock {\em Rocky Mountain J. Math}, 23(2):635--647, 1993.

\bibitem[LT77]{LT}
J.~Lindenstrauss and L.~Tazafriri.
\newblock {\em Classical Banach spaces}.
\newblock Springer-Verlag, third edition, 1977.

\bibitem[LT79]{LT2}
J.~Lindenstrauss and L.~Tzafriri.
\newblock {\em Classical {B}anach spaces. {II}}, volume~97 of {\em Ergebnisse
  der Mathematik und ihrer Grenzgebiete [Results in Mathematics and Related
  Areas]}.
\newblock Springer-Verlag, Berlin-New York, 1979.
\newblock Function spaces.

\bibitem[Lus78]{Lusky}
W.~Lusky.
\newblock Some consequences of rudin's paper ``$l_p$-isometries and
  equimeasurability".
\newblock {\em Indiana Univ. Math. J.}, 27:859--866, 1978.

\bibitem[Maz29]{Mazur}
S.~Mazur.
\newblock Une remarque sur l'homéomorphie des champs fonctionnels.
\newblock {\em Studia Mathematica}, 1(1):83--85, 1929.

\bibitem[Meg01]{M}
M.~G. Megrelishvili.
\newblock Operator topologies and reflexive representability.
\newblock In {\em Nuclear groups and {L}ie groups ({M}adrid, 1999)}, volume~24
  of {\em Res. Exp. Math.}, pages 197--208. Heldermann, Lemgo, 2001.

\bibitem[MP76]{MaPi}
B.~Maurey and G.~Pisier.
\newblock S\'{e}ries de variables al\'{e}atoires vectorielles ind\'{e}pendantes
  et propri\'{e}t\'{e}s g\'{e}om\'{e}triques des espaces de {B}anach.
\newblock {\em Studia Math.}, 58(1):45--90, 1976.

\bibitem[MS86]{Mi-Sch}
V.~D. Milman and G.~Schechtman.
\newblock {\em Asymptotic theory of finite-dimensional normed spaces.}
\newblock Springer, Berlin, 1986.

\bibitem[NPN12]{NN}
J.~C. Navarro-Pascual and M.~A. Navarro.
\newblock Unitary operators in real von {N}eumann algebras.
\newblock {\em J. Math. Anal. Appl.}, 386(2):933--938, 2012.

\bibitem[Pel81]{Peller}
V.V. Peller.
\newblock An analogue of an inequality of {J}. {v}on {N}eumann, isometric
  dilations of contractions, and approximation by isometries in spaces of
  measurable functions.
\newblock {\em Trudy Math. Inst. Steklov}, 155:103--150, 1981.

\bibitem[Pes06]{Pes}
V.~Pestov.
\newblock {\em Dynamics of infinite-dimensional groups}, volume~40 of {\em
  University Lecture Series}.
\newblock American Mathematical Society, Providence, RI, 2006.
\newblock The Ramsey-Dvoretzky-Milman phenomenon, Revised edition of {{\i}t
  Dynamics of infinite-dimensional groups and Ramsey-type phenomena} [Inst.
  Mat. Pura. Apl. (IMPA), Rio de Janeiro, 2005; MR2164572].

\bibitem[Pis82]{Pi}
G.~Pisier.
\newblock Holomorphic semigroups and the geometry of {B}anach spaces.
\newblock {\em Ann. of Math. (2)}, 115(2):375--392, 1982.

\bibitem[Plo74]{Plotkin}
A.I. Plotkin.
\newblock Continuation of $l_p$ isometries.
\newblock {\em Journal of Soviet Mathematics}, 2:143--165, 1974.

\bibitem[PR75]{PR}
A.~Pe{\l}czy\'{n}ski and H.~P. Rosenthal.
\newblock Localization techniques in {$L^{p}$} spaces.
\newblock {\em Studia Math.}, 52:263--289, 1974/75.

\bibitem[Ran99]{ran02}
B.~Randrianantoanina.
\newblock On isometric stability of complemented subspaces of $l_p$.
\newblock {\em Israel J. Math.}, 113:45--60, 1999.

\bibitem[Ran01]{Beatasurvey}
B.~Randrianantoanina.
\newblock Norm-one projections in {B}anach spaces.
\newblock {\em Taiwanese J. Math.}, 5(1):35--95, 2001.
\newblock International Conference on Mathematical Analysis and its
  Applications (Kaohsiung, 2000).

\bibitem[Rud76]{Rudin}
W.~Rudin.
\newblock {$L_p$}-isometries and equimeasurability.
\newblock {\em Indiana Univ. Math. J.}, 25:215--228, 1976.

\bibitem[Usp08]{Usp}
Vladimir~V. Uspenskij.
\newblock On subgroups of minimal topological groups.
\newblock {\em Topology Appl.}, 155(14):1580--1606, 2008.

\bibitem[\v{S}69]{Saskin}
Ju.~A. \v{S}a\v{s}kin.
\newblock The convergence of contractive operators.
\newblock {\em Mathematica (Cluj)}, 11(34):355--360, 1969.

\bibitem[Wul68]{Wulbert}
Daniel~E. Wulbert.
\newblock Convergence of operators and {K}orovkin's theorem.
\newblock {\em J. Approximation Theory}, 1:381--390, 1968.

\bibitem[Yos38]{yosida}
K\^{o}saku Yosida.
\newblock Mean ergodic theorem in {B}anach spaces.
\newblock {\em Proc. Imp. Acad. Tokyo}, 14(8):292--294, 1938.

\bibitem[Zai77]{Zaidenberg}
M.~G. Zaidenberg.
\newblock On the isometric classification of symmetric spaces.
\newblock {\em Dokl. Akad. Nauk SSSR}, 234(2):283--286, 1977.

\end{thebibliography}

\end{document}